\numberwithin{figure}{section}
\numberwithin{equation}{section}
\theoremstyle{definition}
\newtheorem{example}{\protect\examplename}[section]
\theoremstyle{plain}
\newtheorem{lem}{\protect\lemmaname}[section]
  \newenvironment{proof}[1][\proofname]{\par
    \normalfont\topsep6\p@\@plus6\p@\relax
    \trivlist
    \itemindent\parindent
    \item[\hskip\labelsep
          \scshape
      #1]\ignorespaces
  }{%
    \endtrivlist\@endpefalse
  }
  \providecommand{\proofname}{Proof}
\theoremstyle{remark}
\newtheorem{rem}{\protect\remarkname}[section]
\theoremstyle{plain}
\newtheorem{thm}{\protect\theoremname}[section]
\theoremstyle{plain}
\newtheorem{prop}{\protect\propositionname}[section]
\theoremstyle{plain}
\newtheorem{cor}{\protect\corollaryname}[section]
\theoremstyle{definition}
\newtheorem{defn}{\protect\definitionname}[section]
\theoremstyle{definition}
\newtheorem*{abGEthm}{\protect\abGEthmname}
\date{}
\providecommand{\corollaryname}{Corollary}
\providecommand{\definitionname}{Definition}
\providecommand{\examplename}{Example}
\providecommand{\lemmaname}{Lemma}
\providecommand{\propositionname}{Proposition}
\providecommand{\remarkname}{Remark}
\providecommand{\theoremname}{Theorem}
\providecommand{\abGEthmname}{Abstract G\"artner-Ellis Theorem}
\newcommand{\iii}[1]{{\left\vert\kern-0.25ex\left\vert\kern-0.25ex\left\vert #1 
    \right\vert\kern-0.25ex\right\vert\kern-0.25ex\right\vert}}
\newcommand{\EE}{\mathbb{E}}     %mathematical expectation
\newcommand{\HH}{\mathbb{H}}
\newcommand{\RR}{\mathbb{R}}      % for Real numbers
\newcommand{\ZZ}{\mathbb{Z}}      % for integer
\newcommand{\bB}{\mathcal{B}}
\renewcommand{\d}{\partial}
\newcommand{\eps}{\varepsilon}
\begin{document}
\title{Parabolic Anderson Model in the Hyperbolic
Space. Part I: Annealed Asymptotics}
\author{Xi Geng\thanks{School of Mathematics and Statistics, University of Melbourne, Australia.
Email: xi.geng@unimelb.edu.au.}$\ $ and Weijun Xu\thanks{Beijing International Center for Mathematical Research, Peking University,
China. Email:weijunxu@bicmr.pku.edu.cn.}}
\maketitle
\begin{abstract}
We establish the second-order moment asymptotics
for a parabolic Anderson model $\partial_{t}u=(\Delta+\xi)u$ in the
hyperbolic space with a regular, stationary Gaussian potential
$\xi$. It turns out that the growth and fluctuation asymptotics both are identical to the Euclidean situation. As a result, the solution exhibits the same moment intermittency property as in the Euclidean case. An interesting point here is that the fluctuation exponent is determined by a variational problem induced by the Euclidean (rather than hyperbolic) Laplacian. Heuristically, this is due to a curvature dilation effect:
the geometry becomes asymptotically flat after suitable renormalisation
in the derivation of the second-order asymptotics. 

\tableofcontents{}
\end{abstract}

\section{Introduction}

This article is the first part of our investigation on the long time behaviour of the solution $u$ to the parabolic Anderson model (PAM)
\begin{equation} \label{eq:PAM}
    \d_t u = \Delta u + \xi u\;, \qquad u(0,\cdot) \equiv 1
\end{equation}
on the $d$-dimensional space-form $M = \HH^d$ with constant negative curvature $\kappa \equiv -1$. Here $\xi$ is a stationary mean-zero Gaussian field on $M$ with smooth correlation and $\Delta$ is the Laplace-Beltrami operator on $M$. Our basic motivation is to look for new asymptotic behaviour and mechanism that arises from effects of non-Euclidean geometry.

Among several fundamental questions surrounding the study of PAM, our main goal is to understand the moment (annealed) and almost sure (quenched) asymptotics of the solution to (\ref{eq:PAM}) in large time. The present article studies the annealed situation and the second part \cite{GX25} (Part II: Quenched Asymptotics) deals with the quenched situation. Below is a brief summary of our main findings:

\vspace{2mm}\noindent\textbf{Annealed asymptotics}. The moment asymptotics turns out to be identical to the Euclidean situation. In particular, the solution exhibits the same moment intermittency property as in the Euclidean case. A surprising point here is that the fluctuation exponent is determined by a variational problem associated with the Euclidean (rather than hyperbolic) Laplacian. 

\vspace{2mm}\noindent\textbf{Quenched asymptotics}. The growth rate and the exact growth constant are determined through an explicit optimisation procedure, which are both different from the Euclidean situation. The almost sure growth is much faster in the hyperbolic case ($t^{5/3}$ vs $t\sqrt{\log t}$) in logarithmic scale) and the solution exhibits a stronger non-Euclidean localisation mechanism. 

\vspace{2mm}\noindent In summary, the annealed asymptotics is  insensitive to the underlying geometry while negative curvature (more essentially, the exponential volume growth) plays a crucial role in the hyperbolic quenched asymptotics.

\vspace{2mm}\textit{Annealed asymptotics and intermittency}. For every $p \geqslant 1$, let us denote
\begin{equation*}
    m_p(t) \triangleq \EE \big[ u(t,x)^p \big]\;.
\end{equation*}
This quantity does not depend on $x$ by the stationarity of $\xi$. In the present article, we aim at investigating the behaviour of $m_p(t)$ for large $t$. The long time behaviour of $m_p(t)$ has been extensively investigated in the cases of $\RR^d$ or $\ZZ^d$ for various random potentials $\xi$ (cf. \cite{K\"on16, GK00} and references therein). An important motivation for studying the moment asymptotics is to understand the intermittency behaviour of the solution, namely, the main contribution to moments comes from high peaks that are sparsely distributed in space as $t \rightarrow +\infty$. Mathematically, this can be captured by showing that for every $p>q$, one has
\begin{equation} \label{eq:intermittency_log_moment}
    \frac{1}{p} \log m_p(t) - \frac{1}{q} \log m_q(t) \gg 0
\end{equation}
as $t \rightarrow +\infty$. We refer the reader to \cite{GM90, K\"on16} for a discussion on intermittency. It was shown in \cite{GK00} that for a large class of random fields $\xi$, one has the asymptotic expansion
\begin{equation}\label{eq:MomGen}
    \log m_p(t) = H(pt) - \beta (pt) \big( \chi + o_t(1) \big)
\end{equation}
to the second order, where $H$ is the cumulant generating function of the random variable $\xi(0)$, $\beta$ is a scaling function related to the law of $\xi$ and satisfies
\begin{equation*}
    1 \ll \beta(t) \ll H(t)\; \qquad \text{as} \; t \rightarrow +\infty\;,
\end{equation*}
and $\chi \in \RR$ is a constant determined by the Laplacian and the law of $\xi$. In particular, the moment intermittency property (\ref{eq:intermittency_log_moment}) holds for a rich class of $\xi$ including Gaussian potentials.

\begin{comment}
Long time asymptotics for the almost sure behaviour of $u(t,x)$ was investigated in \cite{GKM00}, where for Gaussian and Poisson noises $\xi$, the authors also obtained the expansion to the second order
\begin{equation*}
    \log u(t,0) = h_t \, t - \big( \chi + o_t(1) \big) \sqrt{h_t}\;.
\end{equation*}
Here, $h_t \rightarrow +\infty$ with a rate (logarithmically or with further lower order corrections) depending on whether $\xi$ is Gaussian or Poisson. Both the leading orders $H(pt)$ from moment asymptotics and $h_t \, t$ from almost sure asymptotics was obtained earlier in \cite{CM95}, but the authors did not go to the second order that time. 
\end{comment}

\vspace{2mm}\textit{Main result}. In this article, we show that the moment asymptotics for the solution to the PAM (\ref{eq:PAM}) under a stationary, regular Gaussian potential is identical to the Euclidean situation (\ref{eq:MomGen}) (cf. Theorem \ref{thm:MainThm} below for the precise formulation). In particular, the solution exhibits the same moment intermittency property as in the Euclidean case. A surprising point here is that the fluctuation exponent $\chi$ is also determined by the Euclidean (rather than hyperbolic Laplacian). This fact is a priori unclear and a heuristic explanation is given in Section \ref{sec:EucFluc}. We will discuss our main strategy for proving Theorem \ref{thm:MainThm} and the underlying difficulties in Section \ref{sec:Strat}. 

\vspace{2mm}\textit{Related works in the time-dependent case}. There are several related works that investigate the case of a time-dependent potential in a geometric setting. For instance, \cite{BCO24} considered the PAM with a time-dependent (white in time and coloured in space) Gaussian potential on a Cartan-Hadamard manfold, where the authors obtained well-posedness results as well as moment estimates for the solution. The role of non-positive curvature (and global geometry) also plays an essential role in the recent work \cite{CO25} where similar questions as in \cite{BCO24} were investigated. Other works related to the asymptotic behaviour of PAM with a time-dependent Gaussian potential in a geometric setting include e.g. \cite{TV02, BOT23, BCH24, COV24}. We should point out that the case of time-dependent potential is  different from the time-independent case in several fundamental ways (the asymptotic results, methodology and underlying mechanism have very different nature in these two settings). Here we focus on the time-independent case and will not comment much on the other situation.

\begin{comment}
The motivation for us to revisit the moment asymptotics problem for the solution $u$ to \eqref{eq:PAM} on the hyperbolic manifold is to understand the impact of the negative curvature. For this purpose, we restrict ourselves to Gaussian field $\xi$ with smooth and compactly supported correlation. 
\end{comment}

\section{Basic notions and statement of the main theorem}

In this section, we discuss the problem set-up and state the main
theorem of the article.

\subsection{The parabolic Anderson model in the hyperbolic space}

Let $M=\mathbb{H}^{d}$ be the $d$-dimensional, complete and simply-connected Riemannian manifold of curvature $\kappa\equiv-1$. Let
$\xi=\{\xi(x):x\in M\}$ be a given fixed mean zero, stationary Gaussian
field on $M$, namely, we assume that 

\vspace{2mm}\noindent (A1) $\mathbb{E}[\xi(x)]=0$ for all $x\in M$;

\vspace{1mm}\noindent (A2) $\xi(g\cdot)\stackrel{d}{=}\xi(\cdot)$
for all orientation-preserving isometries $g$ of $M$.

\vspace{2mm}\noindent One can show that (cf. Lemma \ref{lem:CovRad}
below) the \textit{covariance function} of $\xi$ defined by 
\[
C(x,y)\triangleq\mathbb{E}[\xi(x)\xi(y)],\ \ \ x,y\in M
\]
is a function of hyperbolic distance $d(x,y)$. We therefore write
$C(x,y)=Q(d(x,y))$. We make one further assumption on $Q$:

\vspace{2mm}\noindent (A3) $Q$ is twice continuously differentiable.

\vspace{2mm}Typical examples of $\xi$ are given by Example \ref{exa:GField}
below. In this article, we consider the following \textit{parabolic
Anderson model} (PAM) with respect to the above Gaussian field $\xi$
on $M$:
\begin{equation}
\begin{cases}
\partial_{t}u(t,x)=\Delta u(t,x)+\xi(x)u(t,x), & (t,x)\in(0,\infty)\times M;\\
u(0,\cdot)\equiv1.
\end{cases}\label{eq:GlobalPAM}
\end{equation}
Here $\Delta$ denotes the Laplace-Beltrami operator on $M$. Our
goal is to compute the exact asymptotics of the $p$-th moment ($p\geqslant1$)
of $u(t,x)$ as $t\rightarrow\infty$ (both leading growth and fluctuation).

\subsection{Some geometric notions and the hyperboloid model}

We will frequently use geodesic polar coordinates in our analysis
which we shall first recall. Let $o\in M$ be a given fixed base point.
It is well-known that the exponential map $\exp_{o}:T_{o}M\rightarrow M$
is a global diffeomorphism. In particular, any point $x\neq o$ on
$M$ is uniquely written as $x=\exp_{o}(\rho\sigma)$ with $\rho>0$
(the radial component) and $\sigma\in ST_{o}(M)\triangleq\{v\in T_{o}M:|v|=1\}$
(the angular component). Of course $o$ corresponds to $\rho=0.$
The parametrisation 
\[
\exp_{o}:(\rho,\sigma)\mapsto x=\exp_{o}(\rho\sigma)
\]
is known as the \textit{geodesic polar chart} with respect to $o.$
Given $\lambda>0$ and $x\in M$, we define the \textit{dilation}
with respect to $o$
\begin{equation}
\lambda\cdot x\triangleq\exp_{o}(\lambda\exp_{o}^{-1}x).\label{eq:Dilation}
\end{equation}
Using geodesic polar coordinates, this is simply $(\rho,\sigma)\mapsto(\lambda\rho,\sigma)$.

Under geodesic polar chart, the metric tensor, Laplacian and volume
form are respectively given by 
\begin{align}
 & ds^{2}=d\rho^{2}+\sinh^{2}\rho d\sigma^{2},\label{eq:RMetric}\\
 & \Delta=\partial_{\rho}^{2}+\coth\rho\partial_{\rho}+\sinh^{-2}\rho\Delta_{\sigma},\label{eq:DeltaPolar}\\
 & {\rm vol}(d\rho,d\sigma)=\sinh^{d-1}\rho d\rho{\rm vol}(d\sigma).\nonumber 
\end{align}
Here $d\sigma^{2}$ is the metric tensor, $\Delta_{\sigma}$ is the
Laplacian and ${\rm vol}(d\sigma)$ is the volume form on the unit
sphere $ST_{o}M$. We use $\omega_{d-1}$ to denote the spherical
volume of $ST_{o}M.$ More generally, if the underlying curvature
is $\kappa\equiv-\alpha^{2}$ ($\alpha>0$), then under geodesic polar
coordinates one has 
\begin{align}
 & ds^{2}=d\rho^{2}+\alpha^{-2}\sinh^{2}(\alpha\rho)d\sigma^{2},\label{eq:MetAlp}\\
 & \Delta=\partial_{\rho}^{2}+\alpha\coth(\alpha\rho)\partial_{\rho}+\alpha^{2}\sinh^{-2}(\alpha\rho)\Delta_{\sigma},\label{eq:LapAlp}\\
 & {\rm vol}(d\rho,d\sigma)=\alpha^{-(d-1)}\sinh^{d-1}(\alpha\rho)d\rho{\rm vol}(d\sigma).\label{eq:VolAlp}
\end{align}
The hyperbolic distance function on $M$ is denoted as $d(\cdot,\cdot).$
The group of orientation-preserving isometries is denoted as $G.$
For each $R>0,$ we denote $Q_{R}\triangleq\{x\in M:d(x,o)\leqslant R\}$.

\vspace{2mm} A convenient model of $M$ to perform explicit analysis
is the hyperboloid model which is described as follows. For each $\alpha>0,$
let us define 
\begin{equation}
\mathbb{H}_{\alpha}^{d}\triangleq\{x=(x_{1},\cdots,x_{d+1})^{T}:x*x=-\alpha^{-2},x_{d+1}>0\}\subseteq\mathbb{R}^{d+1},\label{eq:Hyperboloid}
\end{equation}
where $*$ is the Lorentzian inner product on $\mathbb{R}^{d+1}$
defined by 
\[
x*y\triangleq\sum_{i=1}^{d}x_{i}y_{i}-x_{d+1}y_{d+1}.
\]
The Lorentzian metric restricted to $\mathbb{H}_{\alpha}^{d}$ becomes
positive definite, turning $\mathbb{H}_{\alpha}^{d}$ into a $d$-dimensional,
complete and simply-connected Riemannian manifold of curvature $\kappa\equiv-\alpha^{2}.$
The space $\mathbb{H}_{1}^{d}$ is the \textit{hyperboloid model }of
$M$. The Riemannian distance function $d^{\alpha}$ on $\mathbb{H}_{\alpha}^{d}$
is explicitly determined through the following relation:
\begin{equation}
x*y=-\alpha{}^{-2}\cosh\big(\alpha d^{\alpha}(x,y)\big),\ \ \ x,y\in\mathbb{H}_{\alpha}^{d}.\label{eq:HypDist}
\end{equation}
A distinguished base point of $\mathbb{H}_{\alpha}^{d}$ is chosen
to be $o=(0,0,\alpha^{-1}).$ Geodesics passing through $o$ are precisely
those intersection curves between $\mathbb{H}_{\alpha}^{d}$ and any
two-dimensional subspace of $\mathbb{R}^{d+1}$ containing the $x_{d+1}$-axis. 

The group of orientation-preserving isometries for $\mathbb{H}_{\alpha}^{d}$
is 
\[
G={\rm SO}^{+}(d,1)=\big\{ g\in{\rm Mat}_{d+1}(\mathbb{R}):g^{T}Jg=J,A_{d+1}^{d+1}>0\big\},
\]
where $J\triangleq{\rm diag}(1,\cdots,1,-1)$. Here the $G$-action
is just the standard linear action; in fact $G$ acts simultaneously
on every $\mathbb{H}_{\alpha}^{d}$ and is the group of orientation-preserving
isometries for all these spaces. The isotropy group $K$ with respect
to the base point $o$ (i.e. the subgroup of $G$ leaving $o$ fixed)
is the orthogonal group: 
\[
K=\Big\{ k\in G:k=\left(\begin{array}{cc}
S & 0\\
0 & 1
\end{array}\right)\ \text{with }S\in{\rm SO}(d)\Big\}.
\]
Note that $k\in K$ acts on each geodesic sphere 
\begin{align*}
S_{r} & \triangleq\{x\in\mathbb{H}_{1}^{d}:d(x,o)=r\}\\
 & =\big\{(x_{1},\cdots,x_{d+1}):x_{1}^{2}+\cdots+x_{d}^{2}=\sinh^{2}r,\ x_{d+1}=\cosh r\big\}
\end{align*}
by spherically rotating the $(x_{1},\cdots,x_{d})^{T}$-component
by the orthogonal matrix $S$. It is well-known that these actions
are transitive: for any $x,y\in\mathbb{H}_{1}^{d}$ (respectively,
$x,y\in S_{r}$) there exists $g\in G$ (respectively, $g\in K$)
such that $y=gx$.

\subsection{Basic properties and a key scaling limit of the Gaussian field}

Here we collect some basic facts about the Gaussian field $\xi$.
We also derive an important functional arising from suitable scaling
limit of $\xi$, which will appear explicitly in the fluctuation exponent
of the PAM (cf. (\ref{eq:FlucExp})). 

To justify its interest, we must first demonstrate the existence of
rich examples of Gaussian fields on $M$ satisfying Assumptions (A1)$\sim$(A3). 
\begin{example}
\label{exa:GField}Let $\Xi(\cdot)$ denote the white noise on the
isometry group $G$. In other words, $\{\Xi(A):A\in{\cal B}(G),|A|_{G}<\infty\}$
is a Gaussian family with mean zero and covariance structure 
\[
\mathbb{E}[\Xi(A)\Xi(B)]=|A\cap B|_{G}.
\]
Here $|A|_{G}\triangleq\int_{A}dg$ where $dg$ is a fixed Haar measure
on $G$. Let $f$ be a given smooth function on $M$ with suitable
decay at infinity. Then 
\[
\xi(x)\triangleq\int_{G}f(g^{-1}\cdot x)\Xi(dg),\ \ \ x\in M
\]
defines a smooth Gaussian field on $M$ with covariance function 
\[
C(x,y)=\int_{G}f(g^{-1}\cdot x)f(g^{-1}\cdot y)dg=\int_{G}f(g\cdot x)f(g\cdot y)dg,
\]
where $g\cdot x$ means the action of $g\in G$ on $x\in M$. It follows
that 
\begin{align*}
C(h\cdot x,h\cdot y) & =\int_{G}f(g\cdot(h\cdot x))f(g\cdot(h\cdot y))dg\\
 & \stackrel{l=gh}{=}\int_{G}f(l\cdot x)f(l\cdot y)dl=C(x,y)
\end{align*}
for all $h\in G$ and $x,y\in G.$ This shows that the distribution
of $\xi$ is invariant under $G$-actions. 
\end{example}
\begin{lem}
\label{lem:CovRad}$C(x,y)$ is a function of $d(x,y)$.
\end{lem}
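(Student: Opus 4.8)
The plan is to deduce the statement from two facts: the $G$-invariance of the covariance function, and the two-point homogeneity of $\HH^{d}$ under the orientation-preserving isometry group. First I would record that Assumption (A2) is a statement about the law of $\xi$ as a random field, hence about all of its finite-dimensional marginals: for every $g\in G$ and $x,y\in M$ one has $(\xi(gx),\xi(gy))\stackrel{d}{=}(\xi(x),\xi(y))$, so the products $\xi(gx)\xi(gy)$ and $\xi(x)\xi(y)$ have the same law, and in particular the same (finite) expectation. Thus
\[
C(gx,gy)=\EE\big[\xi(gx)\,\xi(gy)\big]=\EE\big[\xi(x)\,\xi(y)\big]=C(x,y)
\]
for all $g\in G$ and all $x,y\in M$; that is, $C$ is $G$-invariant. (All the relevant second moments are finite since each $\xi(x)$ is Gaussian, so the products are integrable by Cauchy--Schwarz.)

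Next I would establish the elementary geometric fact that $G$ acts transitively on pairs of points at a prescribed distance: if $d(x,y)=d(x',y')=:r$, then there exists $g\in G$ with $gx=x'$ and $gy=y'$. To see this, use the transitivity of $G$ on $M$ to pick $g_{1}\in G$ with $g_{1}x=o$; then $g_{1}y$ lies on the geodesic sphere $S_{r}$, and, since $K$ acts transitively on $S_{r}$, we may further choose $k\in K$ with $kg_{1}y=p_{r}$ for a fixed reference point $p_{r}\in S_{r}$ (say $p_{r}=(\sinh r,0,\dots,0,\cosh r)$ in the hyperboloid model). As $k$ fixes $o$, the isometry $h\triangleq kg_{1}\in G$ maps $(x,y)$ to $(o,p_{r})$; performing the same construction for $(x',y')$ yields $h'\in G$ mapping $(x',y')$ to $(o,p_{r})$, and then $g\triangleq(h')^{-1}h\in G$ maps $(x,y)$ to $(x',y')$. (In the degenerate case $d=1$, where $K$ is trivial, one instead observes that a translation of $\RR$ maps $(x,y)$ to either $(x',y')$ or $(y',x')$, and the latter is handled by the symmetry $C(x,y)=C(y,x)$.)

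Combining the two steps, $C(x,y)=C(x',y')$ whenever $d(x,y)=d(x',y')$. Hence the function $Q:[0,\infty)\to\RR$ defined by $Q(r)\triangleq C(x,y)$ for any pair $x,y\in M$ with $d(x,y)=r$ is well posed — such a pair exists for every $r\geqslant0$, e.g. $x=o$ together with any $y\in S_{r}$ — and it satisfies $C(x,y)=Q(d(x,y))$, which is the assertion. I do not expect a genuine obstacle here; the only point that deserves any care is the two-point homogeneity of $\HH^{d}$ under the \emph{orientation-preserving} isometry group, which is exactly where the transitive actions of $G$ on $M$ and of $K$ on the geodesic spheres $S_{r}$ (both recalled in the excerpt) enter, together with the symmetry of $C$ to absorb the parity issue in low dimension.
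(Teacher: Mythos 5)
Your proof is correct and follows essentially the same route as the paper's: deduce $G$-invariance of $C$ from Assumption (A2), then use transitivity of $G$ on $M$ and of $K$ on geodesic spheres to conclude two-point homogeneity, hence $C$ depends only on distance. Your presentation is slightly more systematic (you first record $G$-invariance of $C$ as a standalone fact and then state two-point homogeneity as a separate geometric lemma, whereas the paper interleaves the two), and you also flag the degenerate case $d=1$ where $K$ is trivial and $S_r$ is a two-point set so the stated transitivity of $K$ fails — a small gap the paper glosses over, which you close correctly via the symmetry $C(x,y)=C(y,x)$.
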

\begin{proof}
Let $x_{i},y_{i}\in M$ ($i=1,2$) be such that $d(x_{1},y_{1})=d(x_{2},y_{2})$.
Let $g_{i}$ be an isometry such that $g_{i}x_{i}=o$ and set $y_{i}'\triangleq g_{i}y_{i}.$
Since the Gaussian field $\xi$ is stationary, one has 
\[
C(x_{1},y_{1})=C(g_{1}x_{1},g_{1}y_{1})=C(o,y_{1}')
\]
and similarly $C(x_{2},y_{2})=C(o,y_{2}').$ By assumption, one has
$d(o,y_{1}')=d(o,y_{2}')$. As a result, there is a rotation $k\in K$
such that $y_{2}'=ky_{1}'$ and $o=ko$. It follows that 
\[
C(o,y'_{2})=C(ko,ky_{1}')=C(o,y_{1}')\implies C(x_{1},y_{1})=C(x_{2},y_{2}).
\]
This shows that $C(x,y)$ is a function of distance only. 
\end{proof}
As a consequence of Lemma \ref{lem:CovRad}, one can define $Q(r)\triangleq C(x,y)$
for $r=d(x,y)$. We denote 
\[
\sigma^{2}\triangleq Q(0)={\rm Var}[\xi(x)].
\]
It is useful to note that $|Q(r)|\leqslant Q(0)$ for all $r\geqslant0,$
which is easily seen by the fact that 
\[
\mathbb{E}[(\xi(z)\pm\xi(0))^{2}]=2Q(0)\pm2Q(d(z,o))\geqslant0.
\]
In particular, $Q'(0)=0$ and $Q''(0)\leqslant0$.

\subsubsection*{A weak scaling limit of $\xi$}

The Gaussian field $\xi$ possesses a weak scaling limit which plays
a key role in deriving the fluctuation exponent of the PAM. Throughout
the result of the article, we define the cumulant generating function
\begin{equation}
H(t)\triangleq\log\langle e^{t\xi(z)}\rangle=\frac{1}{2}\sigma^{2}t^{2}\label{eq:CGF}
\end{equation}
and fix two basic scaling parameters: 
\begin{equation}
\alpha(t)\triangleq t^{-1/4},\ \beta(t)\triangleq\frac{t}{\alpha^{2}(t)}=t^{3/2}.\label{eq:BetaScale}
\end{equation}
We define the \textit{rescaled Gaussian field} $\xi_{t}$ by 
\begin{equation}
\xi_{t}(x)\triangleq\alpha^{2}(t)\big(\xi(\alpha(t)\cdot x)-\frac{H(t)}{t}\big),\ \ \ t>0,x\in M,\label{eq:RescaledXi}
\end{equation}
where $\alpha(t)\cdot x$ is the dilation defined by (\ref{eq:Dilation}).

It is convenient to think of $\xi_{t}$ as a field on the space of
curvature $\kappa_{t}\equiv-\alpha(t)^{2}$. Under the hyperboloid
model $\mathbb{H}_{\alpha(t)}^{d}$, for each $w\in\mathbb{H}_{\alpha(t)}^{d}$
let $x_{w}=\alpha(t)^{-1}\cdot(\alpha(t)\times w)$ be the corresponding
element on $M=\mathbb{H}_{1}^{d}$, where $\times$ is the actual
scalar multiplication. Then one can identify the field $\xi_{t}$
with $\hat{\xi}_{t}(w)\triangleq\xi_{t}(x_{w})$ on $\mathbb{H}_{\alpha(t)}^{d}$.
\begin{lem}
\label{lem:IsoInv}Under the above identification, the distribution
of the Gaussian field $\xi_{t}$ is invariant under isometries. 
\end{lem}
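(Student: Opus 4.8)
The plan is to reduce the assertion to the stationarity assumption (A2) of $\xi$ on $M$ itself, by unwinding the definition of $\hat{\xi}_t$ and using that the $G$-action on all the hyperboloids $\mathbb{H}^d_\alpha$ is one and the same linear action.

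First I would record a compatibility identity between the intrinsic geodesic dilation on $M$ (defined in (\ref{eq:Dilation})) and the ambient scalar multiplication that relates the hyperboloids. Dilations on $M$ compose, $\lambda\cdot(\mu\cdot x)=(\lambda\mu)\cdot x$, directly from $\lambda\cdot x=\exp_o(\lambda\exp_o^{-1}x)$; and multiplication by $\alpha(t)$ in $\RR^{d+1}$ carries $\mathbb{H}^d_{\alpha(t)}$ onto $\mathbb{H}^d_1=M$, since $w*w=-\alpha(t)^{-2}$ forces $(\alpha(t)w)*(\alpha(t)w)=-1$. Hence the defining relation $x_w=\alpha(t)^{-1}\cdot(\alpha(t)\times w)$ yields
\begin{equation*}
    \alpha(t)\cdot x_w=\alpha(t)\times w\in M,\qquad w\in\mathbb{H}^d_{\alpha(t)},
\end{equation*}
and substitution into (\ref{eq:RescaledXi}) gives the clean formula $\hat{\xi}_t(w)=\alpha^2(t)\big(\xi(\alpha(t)\times w)-H(t)/t\big)$. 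In words, $\hat{\xi}_t$ is, pointwise, a fixed deterministic affine function of $\xi$ pulled back along $w\mapsto\alpha(t)\times w$.

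Then, for $g\in G=\mathrm{SO}^+(d,1)$ acting as an isometry of $\mathbb{H}^d_{\alpha(t)}$, linearity of the action gives $\alpha(t)\times(gw)=g(\alpha(t)\times w)$, and $g$ restricted to $M=\mathbb{H}^d_1$ is an orientation-preserving isometry of $M$. Applying the identity above with $w$ replaced by $gw$,
\begin{equation*}
    \hat{\xi}_t(gw)=\alpha^2(t)\Big(\xi\big(g(\alpha(t)\times w)\big)-H(t)/t\Big).
\end{equation*}
By (A2) the field $y\mapsto\xi(gy)$ on $M$ has the same law as $y\mapsto\xi(y)$; precomposing with the fixed map $w\mapsto\alpha(t)\times w$ and applying the deterministic affine map $z\mapsto\alpha^2(t)(z-H(t)/t)$ pointwise preserves this equality in law, so $\{\hat{\xi}_t(gw)\}_w\stackrel{d}{=}\{\hat{\xi}_t(w)\}_w$, which is the claimed isometry invariance.

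The proof is essentially bookkeeping; the only delicate point is the compatibility identity $\alpha(t)\cdot x_w=\alpha(t)\times w$ together with the fact that scalar multiplication intertwines the geodesic-polar pictures of the various $\mathbb{H}^d_\alpha$ with the linear $G$-action — once that is in place, nothing beyond (A2) is used. As a cross-check one may instead argue at the level of covariances: $\hat{\xi}_t$ is Gaussian with constant mean $-\alpha^2(t)H(t)/t$ and, by Lemma \ref{lem:CovRad} together with the distance-scaling $d^1(\alpha(t)\times w,\alpha(t)\times w')=\alpha(t)\,d^{\alpha(t)}(w,w')$ (read off from (\ref{eq:HypDist})), covariance $\alpha^4(t)\,Q\big(\alpha(t)\,d^{\alpha(t)}(w,w')\big)$, which depends only on the $\mathbb{H}^d_{\alpha(t)}$-distance; since isometries preserve that distance and a Gaussian field is determined by its mean and covariance, isometry invariance follows.
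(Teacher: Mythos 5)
Your argument is correct and follows essentially the same route as the paper's proof: you unwind the definition $\hat{\xi}_t(w)=\alpha^2(t)\big(\xi(\alpha(t)\times w)-H(t)/t\big)$, use linearity of the $G=\mathrm{SO}^+(d,1)$ action to commute $g$ past scalar multiplication by $\alpha(t)$, and then apply the stationarity assumption (A2). The compatibility identity $\alpha(t)\cdot x_w=\alpha(t)\times w$ and the covariance cross-check are correct bookkeeping that the paper leaves implicit.
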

\begin{proof}
Under the hyperboloid model, one has 
\[
\hat{\xi}_{t}(w)=\alpha(t)^{2}\big[\xi(\alpha(t)\times w)-\frac{H(t)}{t}\big],\ \ \ w\in\mathbb{H}_{\alpha(t)}^{d}.
\]
Recall that the group $G={\rm SO}^{+}(d,1)$ acts by isometries simultaneously
on all of the $\mathbb{H}_{\alpha(t)}^{d}$'s. Let $g\in G$ be a
given isometry. It follows from the stationarity of $\xi$ that 
\begin{align*}
\hat{\xi}_{t}(g\times\cdot) & =\alpha(t)^{2}\big[\xi\big(\alpha(t){\rm Id}\times(g\times\cdot)\big)-\frac{H(t)}{t}\big]\\
 & =\alpha(t)^{2}\big[\xi\big(g\times(\alpha(t){\rm Id}\times\cdot)\big)-\frac{H(t)}{t}\big]\\
 & \stackrel{d}{=}\alpha(t)^{2}\big[\xi\big((\alpha(t){\rm Id}\times\cdot\big)-\frac{H(t)}{t}\big]=\hat{\xi}_{t}(\cdot).
\end{align*}
The result thus follows. 
\end{proof}
Given any compact subset $K\subseteq M$, we use ${\cal P}(K)$ to
denote the set of probability measures supported on $K$ (equivalently,
$\mu(K)=1$). Let ${\cal P}_{c}(M)$ denote the space of compactly
supported probability measures on $M$. We say that $\mu_{t}\Rightarrow\mu$
\textit{in} ${\cal P}_{c}(M)$ as $t\rightarrow\infty$ if there exists
a compact set $K\subseteq M$ such that $\mu_{t},\mu\in{\cal P}(K)$
for all $t$ and 
\[
\lim_{t\rightarrow\infty}\mu_{t}=\mu\ \text{weakly}.
\]
Given $\mu\in{\cal P}_{c}(M)$ and $f:M\rightarrow\mathbb{R}$, we
write 
\[
(\mu,f)\triangleq\int_{M}f(x)\mu(dx).
\]
We define a functional $J_{t}:{\cal P}_{c}(M)\rightarrow\mathbb{R}$
by
\begin{equation}
J_{t}(\mu)\triangleq-\frac{1}{\beta(t)}\log\langle e^{\beta(t)(\mu,\xi_{t})}\rangle,\ t>0.\label{eq:JtDef}
\end{equation}
Simple applications of Jensen's and H\"older's inequalities show
that $J_{t}$ is non-negative and concave. 

The following lemma is crucial for the derivation of the fluctuation
asymptotics. Given $z,w\in M$ with polar coordinates $(\rho_{1},\sigma_{1}),$
$(\rho_{2},\sigma_{2})$ with respect to $o$, we define the distance function $d_{{\rm eu}}$ by 
\[
d^2_{{\rm eu}}(z,w)\triangleq\rho_{1}^{2}+\rho_{2}^{2}-2\rho_{1}\rho_{2}\langle\sigma_{1},\sigma_{2}\rangle_{T_{o}M}.
\]
Note that this is just the Euclidean distance between $z$ and $w$
by treating $(\rho_{i},\sigma_{i})$ as their Euclidean polar coordinates. 
\begin{lem}
\label{lem:JtConv}Suppose that $Q(r)$ is twice continuously differentiable.
Then $J_{t}(\mu)$ converges to\textcolor{black}{{} the functional
\begin{equation}
J:{\cal P}_{c}(M)\rightarrow[0,\infty),\ J(\mu)\triangleq-\frac{1}{4}Q''(0)\int_{M\times M}d_{{\rm eu}}(z,w)^{2}\mu(dz)\mu(dw)\label{eq:JFunct}
\end{equation}
}as $t\rightarrow\infty$ uniformly for all $\mu\in{\cal P}(K)$ with
$K$ being any given compact subset of $M$. 
\end{lem}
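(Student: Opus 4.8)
The plan is to use the Gaussianity of $\xi_{t}$ to evaluate $J_{t}(\mu)$ in closed form, and then to extract the limit via second-order expansions of the covariance $Q$ and of the hyperbolic distance under dilation. First I would note that $(\mu,\xi_{t})$ is a shifted centred Gaussian random variable, so
\[
\log\langle e^{\beta(t)(\mu,\xi_{t})}\rangle = \beta(t)\,\EE[(\mu,\xi_{t})] + \tfrac12\beta(t)^{2}\,{\rm Var}[(\mu,\xi_{t})].
\]
From $\EE[\xi_{t}(x)] = -\alpha^{2}(t)H(t)/t$ and the relation $\beta(t) = t/\alpha^{2}(t)$, the first term equals exactly $-H(t)$; from ${\rm Cov}(\xi_{t}(z),\xi_{t}(w)) = \alpha^{4}(t)\,Q\big(d(\alpha(t)\cdot z,\alpha(t)\cdot w)\big)$ together with $\beta(t)^{2}\alpha^{4}(t) = t^{2}$, the second term equals $\tfrac{t^{2}}{2}\int_{M\times M}Q\big(d(\alpha(t)\cdot z,\alpha(t)\cdot w)\big)\mu(dz)\mu(dw)$. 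Using $H(t) = \tfrac12\sigma^{2}t^{2} = \tfrac{t^{2}}{2}Q(0)$ and that $\mu$ is a probability measure to rewrite $-H(t)$ as $-\tfrac{t^{2}}{2}\int_{M\times M}Q(0)\,\mu(dz)\mu(dw)$, one obtains
\[
J_{t}(\mu) = -\frac{t^{2}}{2\beta(t)}\int_{M\times M}\big[Q\big(d(\alpha(t)\cdot z,\alpha(t)\cdot w)\big) - Q(0)\big]\,\mu(dz)\mu(dw),
\]
where the prefactor is $\tfrac{t^{2}}{2\beta(t)} = \tfrac12 t^{1/2} = \tfrac{1}{2\alpha^{2}(t)}$.

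Next I would expand the integrand geometrically. If $z,w$ have geodesic polar coordinates $(\rho_{1},\sigma_{1}),(\rho_{2},\sigma_{2})$, then $\alpha(t)\cdot z,\alpha(t)\cdot w$ have radial components $\alpha(t)\rho_{1},\alpha(t)\rho_{2}$ at the same angular separation, so the hyperbolic law of cosines based at $o$ gives
\[
\cosh d\big(\alpha(t)\cdot z,\alpha(t)\cdot w\big) = \cosh(\alpha(t)\rho_{1})\cosh(\alpha(t)\rho_{2}) - \sinh(\alpha(t)\rho_{1})\sinh(\alpha(t)\rho_{2})\,\langle\sigma_{1},\sigma_{2}\rangle_{T_{o}M}.
\]
Writing $\lambda = \alpha(t)\to 0$ and Taylor expanding the hyperbolic functions, the right-hand side is $1 + \tfrac{\lambda^{2}}{2}\big(\rho_{1}^{2}+\rho_{2}^{2}-2\rho_{1}\rho_{2}\langle\sigma_{1},\sigma_{2}\rangle_{T_{o}M}\big) + O(\lambda^{4}) = 1 + \tfrac{\lambda^{2}}{2}d_{{\rm eu}}(z,w)^{2} + O(\lambda^{4})$; comparing with $\cosh s = 1+\tfrac{s^{2}}{2}+O(s^{4})$ yields $d(\lambda\cdot z,\lambda\cdot w)^{2} = \lambda^{2}d_{{\rm eu}}(z,w)^{2} + O(\lambda^{4})$, the errors being uniform for $z,w$ in a fixed compact set $K$ (on which $\rho_{1},\rho_{2}$ are bounded). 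In particular $d(\lambda\cdot z,\lambda\cdot w)\to 0$ uniformly on $K$.

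Finally, since $Q$ is $C^{2}$ with $Q'(0)=0$, one has $Q(r)-Q(0) = \tfrac12 Q''(0)r^{2} + o(r^{2})$ as $r\to 0$; substituting $r = d(\lambda\cdot z,\lambda\cdot w)$ and invoking the previous step gives, uniformly over $z,w\in K$,
\[
-\frac{1}{2\lambda^{2}}\big[Q\big(d(\lambda\cdot z,\lambda\cdot w)\big) - Q(0)\big] = -\tfrac14 Q''(0)\,d_{{\rm eu}}(z,w)^{2} + o(1).
\]
Integrating this against $\mu\otimes\mu$, which is carried by $K\times K$ for every $\mu\in\mathcal{P}(K)$, produces $J_{t}(\mu)\to -\tfrac14 Q''(0)\int_{M\times M}d_{{\rm eu}}(z,w)^{2}\mu(dz)\mu(dw) = J(\mu)$ with a rate independent of $\mu\in\mathcal{P}(K)$; the non-negativity of $J$ follows from $Q''(0)\leqslant 0$. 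The only point requiring genuine care is the simultaneous uniformity of the two Taylor expansions — that the $o(r^{2})$ remainder for $Q$ composes correctly with the $O(\lambda^{2})$ relative error between $d(\lambda\cdot z,\lambda\cdot w)$ and $\lambda\,d_{{\rm eu}}(z,w)$ — but since both $\rho_{i}$ and $d_{{\rm eu}}(z,w)$ are bounded on $K$, this reduces to a routine $\varepsilon$--$\delta$ estimate rather than a substantive difficulty.
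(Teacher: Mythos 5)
Your proposal is correct and follows essentially the same route as the paper: both compute the Gaussian moment generating function explicitly, both expand the hyperbolic distance under dilation via the hyperbolic law of cosines on the hyperboloid model to get $d(\alpha(t)\cdot z,\alpha(t)\cdot w)=\alpha(t)d_{\mathrm{eu}}(z,w)+O(\alpha(t)^2)$ uniformly on compacta, and both combine this with a second-order expansion of $Q$ near $0$ using $Q'(0)=0$. The only technical difference is that the paper uses the integral form of the Taylor remainder $Q(r)-Q(0)=r^2\int_0^1(1-\theta)Q''(\theta r)\,d\theta$ (which makes the uniform passage $Q''(\theta r)\to Q''(0)$ follow directly from uniform continuity of $Q''$ on a compact interval), whereas you use the Peano form $Q(r)-Q(0)=\tfrac12 Q''(0)r^2+o(r^2)$ and note at the end that the uniformity reduces to a routine $\varepsilon$--$\delta$ argument; this is a cosmetic difference, not a gap.
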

\begin{proof}
Observe that 
\[
(\mu,\xi(\alpha(t)\cdot))=\int_{M}\xi(\alpha(t)\cdot x)\mu(dx)
\]
is a mean zero Gaussian variable with variance 
\[
{\rm Var}[(\mu,\xi(\alpha(t)\cdot))]=\int_{M\times M}Q(d(\alpha(t)\cdot z,\alpha(t)\cdot w))\mu(dz)\mu(dw).
\]
According to the definition (\ref{eq:RescaledXi}) of $\xi_{t}$ and
the basic scaling relations (\ref{eq:BetaScale}), one finds that
\begin{align}
\log\langle e^{\beta(t)(\mu,\xi_{t})}\rangle= & \frac{t^{2}}{2}\int_{M\times M}\big(Q(d(\alpha(t)\cdot z,\alpha(t)\cdot w))-Q(0)\big)\mu(dz)\mu(dw)\nonumber \\
= & \frac{t^{2}}{2}\int_{0}^{1}(1-\theta)d\theta\int_{M\times M}Q''(\theta d(\alpha(t)\cdot z,\alpha(t)\cdot w))\nonumber \\
 & \ \ \ \times d(\alpha(t)\cdot z,\alpha(t)\cdot w)^{2}\mu(dz)\mu(dw)\;,\label{eq:JFuncPf4}
\end{align}
where we have used that $Q'(0)=0$. Under the hyperboloid model $\mathbb{H}_{1}^{d}$, by writing $z=(\rho_{1},\sigma_{1})$ and $w=(\rho_{2},\sigma_{2})$ and performing explicit calculation, one has
\begin{align} \label{eq:JFuncPf1}
    \cosh d(\alpha(t)\cdot z,\alpha(t)\cdot w)-1 = \frac{1}{2}\alpha(t)^{2}d_{{\rm eu}}(z,w)^{2}+O(\alpha(t)^{4})
\end{align}
as $t\rightarrow\infty$. Meanwhile, one also knows that 
\begin{equation}
\cosh d(\alpha(t)\cdot z,\alpha(t)\cdot w)-1=\frac{1}{2}d(\alpha(t)\cdot z,\alpha(t)\cdot w)^{2}+O\big(d(\alpha(t)\cdot z,\alpha(t)\cdot w)^{4}\big).\label{eq:JFuncPf2}
\end{equation}
By equating (\ref{eq:JFuncPf1}) and (\ref{eq:JFuncPf2}), it is immediate
to see that 
\begin{equation}
d(\alpha(t)\cdot z,\alpha(t)\cdot w)=\alpha(t)d_{{\rm eu}}(z,w)+O(\alpha(t)^{2}).\label{eq:JFuncPf3}
\end{equation}
After substituting (\ref{eq:JFuncPf3}) back into (\ref{eq:JFuncPf4}),
one obtains that 
\begin{equation}
\log\langle e^{\beta(t)(\mu,\xi_{t})}\rangle=\big(\frac{\alpha(t)^{2}}{4}Q''(0)\int_{M\times M}d_{{\rm eu}}(z,w)^{2}\mu(dz)\mu(dw)+O(\alpha(t)^{4})\big)t^{2}.\label{eq:JFuncPf5}
\end{equation}
The result thus follows by making use of the explicit relations that
$\alpha(t)=t^{-1/4}$ and $\beta(t)=t^{3/2}$ (cf. (\ref{eq:BetaScale})).
All the above expansions are easily seen to be uniform with respect
to $z,w\in K$. 
\end{proof}
\begin{rem}
The functional $J$ does not ``feel'' the geometry of $M$ and essentially
lives in Euclidean geometry. In fact, from its expression (\ref{eq:JFunct})
one could just regard $M\cong\mathbb{R}^{n}$ with $d_{{\rm eu}}(z,w)$
being the usual Euclidean distance. 
\end{rem}

\subsection{Statement of the main result}

The main result of the present article is stated as follows.
\begin{thm}
\label{thm:MainThm}For any $p\geqslant1$, the following second order
asymptotics holds true for the solution $u(t,x)$ to the PAM (\ref{eq:GlobalPAM}):
\begin{equation}
\lim_{t\rightarrow\infty}\frac{1}{\beta(pt)}\log\big(e^{-H(pt)}\langle u(t,o)^{p}\rangle\big)=-\sqrt{-\frac{Q''(0)}{2}}d,\label{eq:MainAsym}
\end{equation}
where $H(t)$, $\beta(t)$ are defined by (\ref{eq:CGF}), (\ref{eq:BetaScale})
respectively and $\langle\cdot\rangle$ means taking expectation with
respect to the $\xi$-randomness. In other words, the $p$-th moment
of $u(t,o)$ satisfies the asymptotics
\begin{equation}
\langle u(t,o)^{p}\rangle=\exp\Big(\frac{1}{2}\sigma^{2}p^{2}t^{2}-d\sqrt{-\frac{Q''(0)}{2}}p^{3/2}t^{3/2}(1+o(1))\Big)\ \ \ \text{as }t\rightarrow\infty.\label{eq:FlucAsym}
\end{equation}
\end{thm}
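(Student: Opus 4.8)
The plan is to reduce the moment asymptotics to a Laplace-type limit for an occupation measure via the Feynman--Kac formula and the curvature-dilation scaling built around Lemma~\ref{lem:JtConv}, and then to evaluate an explicit Euclidean variational problem. \textbf{Step 1 (Feynman--Kac and Gaussian integration).} Let $B^{1},\dots,B^{p}$ be independent copies of the diffusion on $M$ with generator $\Delta$, started at $o$. Writing $u(t,o)^{p}$ through the Feynman--Kac representation, taking the $\xi$-expectation (the field is centred Gaussian, so the conditional exponential moment is explicit), and denoting by $\mu$ the normalised occupation measure of $B^{1},\dots,B^{p}$ on $[0,t]$ (total mass one), one obtains
\[
 e^{-H(pt)}\langle u(t,o)^{p}\rangle=\EE^{\otimes p}\Big[\exp\Big(\tfrac{(pt)^{2}}{2}\iint_{M\times M}\big(Q(d(z,w))-Q(0)\big)\,\mu(dz)\,\mu(dw)\Big)\Big].
\]
Here the subtracted $H(pt)=\tfrac12\sigma^{2}p^{2}t^{2}$ is precisely the diagonal ($Q(0)$) contribution, and the remaining integrand is non-positive, matching the sign of the fluctuation exponent.

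\textbf{Step 2 (reduction to a Laplace limit via curvature dilation).} Apply the dilation $x\mapsto\alpha(pt)\cdot x$ together with the time change $s\mapsto\alpha(pt)^{2}s$. From the forms (\ref{eq:DeltaPolar}) and (\ref{eq:LapAlp}) of the Laplacian one checks that this transforms the $\Delta$-diffusion on $M=\HH^{d}_{1}$ into the $\Delta$-diffusion on $\HH^{d}_{\alpha(pt)}$, so that $\mu$ is the pushforward under the dilation of the normalised occupation measure $\nu$ of $p$ independent such motions run for time $\beta(pt)/p$. The identity established inside the proof of Lemma~\ref{lem:JtConv} --- namely $\iint\big(Q(d(\alpha(pt)\cdot y,\alpha(pt)\cdot y'))-Q(0)\big)\,\nu(dy)\,\nu(dy')=-\tfrac{2\beta(pt)}{(pt)^{2}}J_{pt}(\nu)$, with $J_{pt}$ as in (\ref{eq:JtDef}) --- converts the previous display into
\[
 e^{-H(pt)}\langle u(t,o)^{p}\rangle=\EE\big[\exp\big(-\beta(pt)\,J_{pt}(\nu)\big)\big],
\]
so the problem becomes the computation of $\lim_{t\to\infty}\tfrac{1}{\beta(pt)}\log\EE\big[e^{-\beta(pt)J_{pt}(\nu)}\big]$. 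The occupation measure $\nu$ obeys a Donsker--Varadhan large deviation principle at speed $\beta(pt)$ (the $p$ independent copies contribute additively to the rate) with rate function $I_{\alpha(pt)}(\mu)=\int|\nabla\sqrt{f}|^{2}\,d{\rm vol}_{\HH^{d}_{\alpha(pt)}}$ for $d\mu=f\,d{\rm vol}$, the full Dirichlet form of $\Delta$ (no factor $\tfrac12$, since the PAM carries $\Delta$ rather than $\tfrac12\Delta$). As $\alpha(pt)\to0$ the geometry of $\HH^{d}_{\alpha(pt)}$ on any fixed ball converges, in geodesic polar coordinates, to that of $\RR^{d}$, so $I_{\alpha(pt)}$ $\Gamma$-converges to the Euclidean rate function $I_{\rm eu}(\mu)=\int_{\RR^{d}}|\nabla\sqrt{f}|^{2}$, while $J_{pt}\to J$ uniformly on compacts by Lemma~\ref{lem:JtConv}. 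A Varadhan / abstract G\"artner--Ellis argument then yields
\[
 \lim_{t\to\infty}\tfrac{1}{\beta(pt)}\log\big(e^{-H(pt)}\langle u(t,o)^{p}\rangle\big)=-\inf_{\mu}\big(J(\mu)+I_{\rm eu}(\mu)\big).
\]

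\textbf{Step 3 (the variational problem).} For $d\mu=f\,dz$, the identity $\iint|z-w|^{2}f(z)f(w)\,dz\,dw=2\int|z|^{2}f-2\big|\int zf\big|^{2}$ together with the translation invariance of $I_{\rm eu}$ lets us assume $\int zf=0$, so $J(\mu)=-\tfrac{Q''(0)}{2}\int|z|^{2}f$. With $\phi=\sqrt{f}$ and $\|\phi\|_{2}=1$,
\[
 J(\mu)+I_{\rm eu}(\mu)=\int_{\RR^{d}}\Big(|\nabla\phi|^{2}-\tfrac{Q''(0)}{2}|z|^{2}\phi^{2}\Big)dz\ \ge\ \inf{\rm spec}\Big(-\Delta-\tfrac{Q''(0)}{2}|z|^{2}\Big),
\]
with equality at the Gaussian ground state (recall $Q''(0)\le0$, so the potential is confining). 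This quantum harmonic oscillator decouples over the $d$ coordinates, each one-dimensional factor $-\partial^{2}-\tfrac{Q''(0)}{2}z^{2}$ having bottom eigenvalue $\sqrt{-Q''(0)/2}$ (eigenfunction $\exp(-\tfrac12\sqrt{-Q''(0)/2}\,z^{2})$); hence the infimum equals $d\sqrt{-Q''(0)/2}$. Combined with Step~2 this is exactly (\ref{eq:MainAsym}), and (\ref{eq:FlucAsym}) follows since $\beta(pt)=p^{3/2}t^{3/2}$ and $H(pt)=\tfrac12\sigma^{2}p^{2}t^{2}$.

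\textbf{Main obstacle.} The crux is the upper bound in Step~2. Because $M$ is non-compact and the ambient geometry $\HH^{d}_{\alpha(pt)}$ itself varies with $t$, the family $\{\nu\}$ is not exponentially tight a priori, and a typical long-time path of the rescaled diffusion does spread its occupation far from $o$; the necessary spatial confinement therefore has to be extracted from the competition between the penalisation $J_{pt}(\nu)$ (large when $\nu$ is spread out, by the $d_{\rm eu}^{2}$-weight) and the Dirichlet cost $I_{\alpha(pt)}(\nu)$ (large when $\nu$ concentrates away from $o$), and this estimate must be made quantitative \emph{uniformly in $t$}, in tandem with the $\Gamma$-convergence of $I_{\alpha(pt)}$ on the flattening manifolds. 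The lower bound is comparatively routine: restrict $\nu$ to a small weak-topology neighbourhood of a compactly supported truncation of the harmonic-oscillator ground state contained in a fixed ball --- where the geometry is already nearly Euclidean --- and apply the Donsker--Varadhan lower bound together with $J_{pt}\to J$.
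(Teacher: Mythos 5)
Your Steps 1--2 correctly reduce $e^{-H(pt)}\langle u(t,o)^{p}\rangle$ to $\EE[\exp(-\beta(pt)J_{pt}(\nu))]$ (the scaling arithmetic and the identity relating the covariance double integral to $J_{pt}$ all check out), and your Step 3 variational computation is correct: the reduction to the harmonic oscillator ground state via $\iint|z-w|^{2}f\,f = 2\int|z|^{2}f - 2|\int zf|^{2}$ and the translation to centre of mass gives exactly $d\sqrt{-Q''(0)/2}$, which is in fact a cleaner route than the paper's citation of \cite[Eq.\ (4.6)]{GK00}. However, your Step 2 contains a genuine gap which you identify but do not close: the Donsker--Varadhan LDP you invoke for the occupation measure $\nu$ does \emph{not} hold on the full non-compact space $\Sigma$. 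The family $\{\nu\}$ is not exponentially tight, and the rate function $I$ is not good there because translating a compactly supported profile by an arbitrary isometry changes neither $I$ nor $J$, so no confinement can come from the rate function alone; the penalisation you hope to ``extract from the competition between $J_{pt}$ and the Dirichlet cost'' is illusory without an additional spatial argument. Your remark that ``this estimate must be made quantitative uniformly in $t$'' describes the problem rather than solves it.

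The paper's actual proof is organised around exactly this obstruction and goes a substantially different route. For the lower bound it restricts to a fixed ball $\Sigma_{R}$, where $E=\mathcal{P}(\Sigma_{R})$ is compact and the abstract G\"artner--Ellis theorem applies after verifying convergence of log moment generating functions through spectral perturbation theory (Lemma~\ref{lem:VfAsymp}, Lemma~\ref{lem:ConvLogMGF}) rather than $\Gamma$-convergence of the rate functionals. For the upper bound it first localises to a time-dependent ball $Q_{R(t)}$ with $t^{5/4}\ll R(t)\ll\beta(t)$ (Lemma~\ref{lem:UpperLocal}, using the exit time estimate Lemma~\ref{lem:ExitEst}), then decomposes $Q_{\tilde R(t)}^{t}$ into $t$-independent-size geodesic cells via a sphere-packing construction (Section~\ref{subsec:HypPart}), builds a partition of unity whose gradient squared is $O(1/r^{2})$ uniformly in $t$ (Lemma~\ref{lem:POU}), applies the eigenvalue decomposition Lemma~\ref{lem:DEigenEst} to control $\lambda_{1}^{\xi_{t}-\Phi_{r}^{t}}(Q_{\tilde R(t)}^{t})$ by the max of cell eigenvalues, and only then invokes the fixed-domain upper bound Lemma~\ref{lem:DomainUpper} (itself proved with the abstract G\"artner--Ellis theorem on compacts). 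The sub-exponential count of cells (Lemma~\ref{lem:AngRad}) and the $1/r^{2}$ sacrifice are precisely how the ``confinement uniformly in $t$'' is made quantitative, and the geometry-dependent parts of this (sphere packing, bounds on $\theta_{k}^{t}$, diameters of the cells $P_{k}^{t}(i)$) are exactly the new work needed in the hyperbolic setting. Your proposal is missing all of this, so while the endpoints of your argument are right, the central upper-bound step is not proved.
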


\subsubsection{The Euclidean nature of the fluctuation exponent}\label{sec:EucFluc}

We make an important comment on the fluctuation exponent given in
(\ref{eq:MainAsym}). Let us first introduce the so-called \textit{Donsker-Varadhan
functional} on the \textit{Euclidean} space $T_{o}M$. This is the
functional ${\cal S}_{{\rm eu}}:{\cal P}_{c}(T_{o}M)\rightarrow[0,+\infty]$
defined by 
\begin{equation}
\mathcal{S}_{{\rm eu}}(\nu)\triangleq\begin{cases}
\int_{T_{o}M}|\nabla\phi|^{2}dx, & \nu\ll dx,\dfrac{d\nu}{dx}=\phi^{2}\text{ with some }\phi\in H^{1}(T_{o}M);\\
+\infty, & \text{otherwise}.
\end{cases}\label{eq:DVFunc}
\end{equation}
Here $dx$ is the Lebesgue measure on $T_{o}M$ and $H^{1}(T_{o}M)$
is the standard $(1,2)$-Sobolev space over $T_{o}M$. Note that if
$\nu$ is supported on some bounded domain $B$ and $\mathcal{S}_{{\rm eu}}(\nu)<\infty,$
then $\sqrt{d\nu/dx}\in H_{0}^{1}(B)$ ($H^1$ functions
vanishing at the boundary $\partial B$). 

What we will actually prove is that 
\begin{equation}
\lim_{t\rightarrow\infty}\frac{1}{\beta(pt)}\log\big(e^{-H(pt)}\langle u(t,o)^{p}\rangle\big)=-\chi\label{eq:LpAsym}
\end{equation}
where 
\begin{equation}
\chi\triangleq\inf\big\{ J(\mu)+{\cal S}_{{\rm eu}}(\exp_{o}^{-1}\mu):\mu\in{\cal P}_{c}(M)\big\}.\label{eq:FlucExp}
\end{equation}
A remarkable point is that although we are considering the PAM in the hyperbolic space $M$, the fluctuation exponent $\chi$ given by (\ref{eq:FlucExp}) does \textit{not} ``feel'' the non-Euclidean
geometry of $M$. Indeed, it is clear from the expression (\ref{eq:JFunct})
that the functional $J$ also has an Euclidean nature; it is essentially
the same limiting functional $J$ arising in the Euclidean PAM considered
in \cite{GK00} when the Gaussian field is invariant under Euclidean
isometries. As a result, one can directly apply \cite[Identity (4.6)]{GK00}
to conclude that $\chi=\sqrt{-Q''(0)/2}d$ (cf. Lemma \ref{lem:ChiValue}
 below).

Heuristically, the fundamental reason behind this is that the fluctuation
asymptotics arises from a suitable rescaling of the PAM as well as
of the Gaussian field. However, when such rescaling is applied the
underlying geometry goes through a curvature dilation effect: the
curvature is rescaled from $\kappa\equiv1$ to $\kappa_{t}\equiv-\alpha(t)^{2}$.
Since $\alpha(t)=t^{-1/4}\rightarrow0$ as $t\rightarrow\infty$,
the limiting geometry where the fluctuation asymptotics takes place
is thus flat. Making such heuristics precise requires substantial
effort. The main difficulty is that one cannot simultaneously rescale
the geometry of $M$ to Euclidean and send the time parameter of the
rescaled PAM to infinity on the non-compact state space in one go.
One needs to perform localisation analysis over compact domains in
a careful way.

\subsubsection{Outline of main strategy and difficulties}\label{sec:Strat}

To prove Theorem \ref{thm:MainThm}, we will follow the main strategy
developed in \cite{GK00}. However, there are several challenges to
overcome in the hyperbolic situation. We first outline the main steps
and point out the underlying difficulties. 

The essence of the G\"artner-K\"onig argument is quite natural to
describe (we only consider the case when $p=1$). First of all, after
applying a suitable rescaling and re-centering of the PAM (cf. \eqref{eq:RescaleLocalPAM} below) and the Feynman-Kac representation (\ref{eq:FKRes}), one finds
that 
\[
e^{-H(t)}u(t,o)=\mathbb{E}_{o}\big[e^{\int_{0}^{\beta(t)}\xi_{t}(W_{s}^{t})ds}\big],
\]
where $W^{t}$ is now a Brownian motion in the hyperbolic space of
curvature $\kappa_{t}\equiv-\alpha(t)^{2}$. Of course, this is the
correct normalisation in view of the first order asymptotics derived in
Proposition \ref{prop:1stOrdAsym}. In terms of the functional $J_{t}$ (cf. \eqref{eq:JtDef}), one can now write 
\[
e^{-H(t)}\langle u(t,o)\rangle=\mathbb{E}_{o}\big[e^{-\beta(t)J_{t}(L_{\beta(t)}^{t})}\big],
\]
where 
\begin{equation}
L_{s}^{t}(dx)\triangleq\frac{1}{s}\int_{0}^{s}{\bf 1}_{\{W_{r}^{t}\in dx\}}dr,\ \ \ s>0\label{eq:OTMP}
\end{equation}
is the \textit{occupation time measure process} for $W^{t}.$ The key
point is that the family of measures $\{{\rm Law}(L_{\beta(t)}^{t}):t>0\}$
satisfies a large deviation principle (LDP) with rate function ${\cal S}_{{\rm eu}}$
(the Donsker-Varadhan LDP). Since $J_{t}\rightarrow J$, it follows
from the well-known Varadhan's lemma that 
\[
\lim_{t\rightarrow\infty}\frac{1}{\beta(t)}\log\big(e^{-H(t)}\langle u(t,o)\rangle\big)=\sup\big\{-J-{\cal S}_{{\rm eu}}\big\}=-\inf\{J+{\cal S}_{{\rm eu}}\}.
\]
However, it requires a substantial amount of non-trivial effort to
implement this idea precisely even in the Euclidean case. The main
issue is that one cannot establish such an LDP on the whole (non-compact)
space $\mathbb{R}^{d}$; the Donsker-Varadhan LDP essentially only
holds when the state space is compact. As a result, one has to develop
localisation techniques carefully to reduce the problem to the ones
over fixed bounded domains. This is easier when one tries to prove
a lower asymptotics; by the positivity of the PAM one just directly
restrict the solution $u$ to a ball of fixed radius in order to get
a lower bound. The upper bound is much harder to obtain. To accturately
approximate the solution $u(t,o)$, one has to localise it on a sufficiently
large ball $Q_{R(t)}$ with a time-dependent radius $R(t)$ ($R(t)\rightarrow\infty$
as $t\rightarrow\infty$). One then decompose $Q_{R(t)}$ into balls
of fixed (time-independent) radius and controls the PAM over $R(t)$
in terms the corresponding ones over these subdomains. This step is
done through estimating the corresponding principal Dirichlet eigenvalues
(with a small sacrifice of potential), which is accurate enough to
reflect the large time asymptotics of the renormalised PAMs (Laplace's
principle). 

In the current hyperbolic situation, one faces several additional
difficulties. First of all, one cannot directly apply the Donsker-Varadhan
LDP since the underlying process $\{W_{s}^{t}:s\geqslant0\}$ depends
also on the $t$-parameter (the geometry changes at the same time
when one sends the time variable to infinity). To overcome this difficulty,
we shall rely on the abstract G\"artner-Ellis LDP in topological
vector spaces (cf. Section \ref{subsec:LDP} below). Justifying the
conditions in the theorem (i.e. convergence of cumulant generating
functions) becomes a question about spectral properties which can
be handled by analytical means (Lemma \ref{lem:ConvLogMGF}). In addition,
the decomposition of the large geodesic ball $Q_{R(t)}$ into congruent
balls is harder to achieve explicitly because one does not have the
obvious periodic lattice structure as in Euclidean spaces. We shall
develop a more robust way of constructing suitable coverings of $Q_{R(t)}$
as well as the associated partitions of unity (Sections \ref{subsec:HypPart}
and \ref{subsec:POU}). One then also needs to control the sacrifice
of potential in the eigenvalue estimates geometrically (Lemma
\ref{lem:POU} (ii)). A surprising fact is that even under exponential
volume growth in the hyperbolic space, the error arising from the
eigenvalue decomposition is still negligible with a proper choice
of $R(t)$ (Section \ref{subsec:DecomEigen}).

\subsection{Feynman-Kac representations}

Our proof of Theorem \ref{thm:MainThm} relies on the Feynman-Kac
representation as a starting point. We now recall this basic tool.

\subsubsection{The global and localised PAM}
\begin{prop}
\label{prop:FK}(i) {[}Global PAM{]} For a.e. realisation of $\xi$,
the solution $u(t,x)$ to the PAM defined by (\ref{eq:GlobalPAM})
admits the following Feynman-Kac representation:
\begin{equation}
u(t,x)=\mathbb{E}\big[e^{\int_{0}^{t}\xi(W_{s}^{x})ds}\big],\label{eq:FKGlobal}
\end{equation}
where $\{W_{s}^{x}\}$ denotes a hyperbolic Brownian motion (generated
by $\Delta$) starting at $x$ that is independent of $\xi$, and
the expectation is taking with respect to $\{W_{s}^{x}\}.$

\vspace{2mm}\noindent (ii) {[}Localised PAM{]} Recall that $Q_{R}$
is the closed geodesic ball of radius $R$ centered at $o$. Consider
the following initial-boundary value problem:
\begin{equation}
\begin{cases}
\partial_{s}u_{R}(s,x)=\Delta u_{R}(s,x)+\xi(x)u_{R}(s,x), & (s,x)\in(0,\infty)\times Q_{R};\\
u_{R}(0,\cdot)\equiv1;\ u_{R}(s,\cdot)=0\ \text{on }\partial Q_{R}
\end{cases}\label{eq:LocalPAM}
\end{equation}
Then solution $u_{R}(t,x)$ admits the following representation:
\[
u_{R}(s,x)=\mathbb{E}\big[e^{\int_{0}^{s}\xi(W_{r}^{x})dr};W^{x}([0,s])\subseteq Q_{R}\big].
\]
\end{prop}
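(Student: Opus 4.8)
The final statement is Proposition~\ref{prop:FK}, the Feynman--Kac representation for the global PAM \eqref{eq:FKGlobal} and its Dirichlet-localised counterpart on $Q_R$. The plan is to treat this as a (mild) uniqueness/verification problem: one shows that the right-hand side, as a function of $(t,x)$, solves the PAM with the stated initial (and boundary) data in a suitable weak or mild sense, and then invokes a uniqueness result for the linear parabolic equation with potential $\xi$. Concretely, I would first record the regularity of the potential: under (A1)--(A3) the field $\xi$ has (a.s.) a locally bounded, continuous modification with at-most-polynomial growth in $d(x,o)$ (from the standard Gaussian maximal-inequality/Borell--TIS estimate on geodesic balls), so that the multiplication operator by $\xi$ is a well-behaved lower-order perturbation of $\Delta$ and the Feynman--Kac exponential $\exp(\int_0^t \xi(W^x_s)\,ds)$ is a.s. finite and integrable (this integrability is the point where the curvature matters: hyperbolic Brownian motion escapes ballistically, $d(W^x_s,o)\sim s$, so polynomial growth of $\xi$ is exactly compatible with the $L^1$ bound on the exponential functional over a fixed time horizon $[0,t]$).

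For part (i), I would give the Feynman--Kac argument in its cleanest form: fix $t>0$ and set $v(s,x) \triangleq \EE\big[e^{\int_0^s \xi(W^x_r)\,dr}\big]$; by the Markov property of hyperbolic Brownian motion and It\^o's formula applied to $r\mapsto u(t-r, W^x_r)\exp(\int_0^r \xi(W^x_\ell)\,d\ell)$ (for a candidate classical solution $u$), one checks that this process is a local martingale, and the growth/integrability bounds promote it to a true martingale; equating its expectations at $r=0$ and $r=t$ gives $u(t,x)=v(t,x)$. The two inputs needed are: (a) existence of a classical (or sufficiently regular mild) solution $u$ to \eqref{eq:GlobalPAM} with the stated growth --- this follows from parabolic Schauder theory using (A3) (which gives $\xi\in C^{1,\text{loc}}$, hence H\"older) together with an a priori growth estimate obtained by comparison with the heat semigroup and Gronwall; and (b) the martingale (rather than merely local-martingale) property, which reduces to a uniform-integrability estimate $\sup_{r\le t}\EE\big[e^{q\int_0^r \xi(W^x_\ell)\,d\ell}\big]<\infty$ for some $q>1$, again via the polynomial growth of $\xi$ and the linear escape rate of $W^x$. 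An alternative, essentially equivalent route is to verify directly that $v$ is a mild solution, i.e. $v(t,\cdot)=P_t 1 + \int_0^t P_{t-s}(\xi\, v(s,\cdot))\,ds$ with $P_t$ the hyperbolic heat semigroup, by conditioning on the first infinitesimal increment of $W^x$ (Duhamel/Chapman--Kolmogorov), and then to invoke uniqueness in the mild class.

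For part (ii), the argument is the same but cleaner because $Q_R$ is a fixed bounded domain: the killed process $W^x$ with absorption at $\partial Q_R$ has bounded lifetime-in-space, $\xi$ is bounded on $\overline{Q_R}$, so all integrability issues disappear; the Feynman--Kac--Dirichlet formula $u_R(s,x)=\EE\big[e^{\int_0^s \xi(W^x_r)\,dr};\,W^x([0,s])\subseteq Q_R\big]$ follows from the standard Feynman--Kac theorem for the self-adjoint Schr\"odinger operator $-\Delta+(-\xi)$ with Dirichlet boundary conditions on $Q_R$ (Dynkin's formula applied up to the exit time $\tau_R$, using that the boundary datum is $0$), or again by the mild-solution/uniqueness route on the bounded domain. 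One should note for use elsewhere in the paper that $u_R \uparrow u$ monotonically as $R\to\infty$ by monotone convergence in the Feynman--Kac expectation, which is the bridge to the localisation arguments.

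The main obstacle is not the algebra of It\^o's formula but the integrability/growth bookkeeping in the non-compact hyperbolic setting for part (i): one must simultaneously control (i) the a.s. growth of $\xi$ at spatial infinity (Gaussian tail bounds on $\sup_{Q_R}|\xi|$ uniformly in $R$, which give $O(\sqrt{R})$ or $O(R)$ a.s.), (ii) the escape rate of hyperbolic Brownian motion (so that $\int_0^t \xi(W^x_s)\,ds$ and its exponential moments are finite), and (iii) the a priori growth of the PDE solution $u(t,x)$ in $x$ needed to justify uniqueness in the chosen function class. These three have to be compatible, and making the function class precise --- e.g. solutions with $|u(t,x)|\le C\exp(C e^{c\, d(x,o)})$ or with a polynomial-type bound after the Girsanov/comparison reduction --- is the delicate point; for a fixed time horizon the linear-in-$s$ drift of $d(W^x_s,o)$ against polynomial growth of $\xi$ makes everything go through, but this must be stated carefully. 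A reader who is content to take the Feynman--Kac representation as classical may regard part (ii) as immediate and part (i) as its $R\to\infty$ limit.
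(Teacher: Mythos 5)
Your proposal is correct and is essentially the paper's approach written out in full: the paper disposes of this proposition in one line, noting that for each fixed realisation of $\xi$ both equations are solved deterministically and that the Feynman--Kac representation is then standard stochastic calculus (citing Karatzas--Shreve), which is precisely the verification/It\^o-plus-uniqueness argument you outline, with the integrability bookkeeping you describe. One minor quibble: (A3) only yields an a.s. locally H\"older continuous modification of $\xi$ (not $C^{1}$ sample paths), but H\"older regularity is all that your Schauder and Feynman--Kac steps actually require.
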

\begin{proof}
This is standard from stochastic calculus; both equations (\ref{eq:GlobalPAM})
and (\ref{eq:LocalPAM}) are solved deterministically for every fixed
realisation of $\xi$ (cf. \cite{KS88}).
\end{proof}

\subsubsection{The rescaled equation}

We also need to consider a suitably rescaled PAM. Before doing so,
we first introduce a basic geometric convention that will be used
frequently in our discussion. Recall that the exponential map $\exp_{o}:T_{o}M\rightarrow M$
is a global diffeomorphism which produces the (global) geodesic polar
chart 
\[
[0,\infty)\times ST_{o}M\ni(\rho,\sigma)\mapsto\exp(\rho\sigma)\in M,
\]
and under this chart the metric tensor is given by (\ref{eq:RMetric}).
Therefore, one can forget about the actual space $M$ and instead
regard the space 
\begin{equation}
\Sigma\triangleq\{(\rho,\sigma):\rho\geqslant0,\sigma\in\mathbb{S}^{d-1}\}\label{eq:SigSp}
\end{equation}
equipped with metric $g=d\rho^{2}+\sinh^{2}\rho d\sigma^{2}$ as a
model for the space $M$. Here $d\sigma^{2}$ denotes the standard
metric on the unit sphere $\mathbb{S}^{d-1}$ in $\mathbb{R}^{d}$.
Of course when $\rho=0$ the sphere is collapsed to a point; topologically
$\Sigma\cong T_{o}M$ and geometrically $\rho=0$ is not a singularity. Later on, we will need to consider
spaces of curvature $\kappa_{t}\equiv-\alpha(t)^{2}$ for different
$t$'s and a corresponding family of Markov processes (the Brownian
motion) on these spaces. A benefit of this viewpoint is that these
spaces can all be modelled on the same (topological) space $\Sigma$
defined by (\ref{eq:SigSp}) but now equipped with the $t$-dependent
metric 
\begin{equation}
g^{t}=d\rho^{2}+\alpha(t)^{-2}\sinh^{2}(\alpha(t)\rho)d\sigma^{2}.\label{eq:MetricT}
\end{equation}
The space $(\Sigma,g^{t})$ is isometric to $\mathbb{H}_{\alpha(t)}^{d}$.
Note that since $\alpha(t)\rightarrow0$ as $t\rightarrow\infty,$
the space $(\Sigma,g^{\infty})$ is just the Euclidean space. Respectively,
given $R\geqslant0$ we introduce the ball 
\begin{equation}
\Sigma_{R}\triangleq\{(\rho,\sigma):\rho\leqslant R,\mathbb{S}^{d-1}\}.\label{eq:SigR}
\end{equation}
The spaces $\{(\Sigma_{R},g^{t}):t>0\}$ are topologically identical
but geometrically non-isometric; for each fixed $t$ it is the closed
geodesic ball of radius $R$ under curvature $\kappa_{t}\equiv-\alpha(t)^{2}$.
The space $(\Sigma_{R},g^{\infty})$ is the closed Euclidean $R$-ball.
Under such a convention (and under geodesic polar coordinates), the
rescaled Gaussian field $\xi_{t}$ (cf. (\ref{eq:RescaledXi})) and
the limiting functional $J$ (cf. Lemma \ref{lem:JtConv}) can both
be viewed as defined on $\Sigma$ or $\Sigma_{R}$ depending on the
context.

\vspace{2mm}Now we derive the equation for a suitably rescaled PAM
and its Feynman-Kac representation. Given fixed $R,t>0$, let $u_{R\alpha(t)}(\cdot,\cdot)$
be the solution to (\ref{eq:LocalPAM}) on $Q_{R\alpha(t)}$. Define
the function
\begin{equation}
v^{t}(s,x)\triangleq e^{-s\alpha^{2}(t)H(t)/t}u_{R\alpha(t)}(\alpha^{2}(t)s,\alpha(t)\cdot x),\ \ \ s\geqslant0,x\in Q_{R}.\label{eq:RescaleLocalPAM}
\end{equation}
We shall use polar coordinates $x=(\rho,\sigma)$ ($\rho\leqslant R$
and $\sigma\in ST_{o}M\cong\mathbb{S}^{d-1}$) so that $v^{t}(s,\cdot)$
is viewed as a function defined on $\Sigma_{R}$ (cf. (\ref{eq:SigR})). 
\begin{lem}
\label{lem:ResPAM}The function $\{v^{t}(s,x):s\geqslant0,x=(\rho,\sigma)\in\Sigma_{R}\}$
is the solution to the following rescaled PAM:
\begin{equation}
\begin{cases}
\partial_{s}v^{t}={\cal L}^{t}v^{t}+\xi_{t}v^{t}, & (s,(\rho,\sigma))\in(0,\infty)\times\Sigma_{R};\\
v^{t}(s,\cdot)=0 & \text{on }\{\rho=R\};\\
v^{t}(0,\cdot)\equiv1,
\end{cases}\label{eq:ResPAM}
\end{equation}
where
\[
{\cal L}^{t}\triangleq\partial_{\rho}^{2}+\alpha(t)\coth(\alpha(t)\rho)\partial_{\rho}+\alpha^{2}(t)\sinh^{-2}(\alpha(t)\rho)\Delta_{\sigma}
\]
is the Laplacian under curvature $\kappa_{t}\equiv-\alpha^{2}(t)$
with $(\rho,\sigma)$ being viewed as the corresponding geodesic polar
coordinates. In particular, $v^{t}(s,x)$ admits the following Feynman-Kac
representation:
\begin{equation}
v^{t}(s,x)=\mathbb{E}\big[e^{\int_{0}^{s}\xi_{t}(W_{r}^{t,x})dr};W^{t,x}([0,s])\subseteq\Sigma_{R}\big],\ \ \ (s,x)\in[0,\infty)\times\Sigma_{R},\label{eq:FKRes}
\end{equation}
where $\{W_{s}^{t,x}\}$ is a Brownian motion on $(\Sigma,g^{t})$
starting at $x$ that is independent of $\xi.$
\end{lem}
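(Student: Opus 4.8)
The plan is to prove both assertions by a change of variables in the localised PAM \eqref{eq:LocalPAM}, the only substantive ingredient being how the Laplace--Beltrami operator transforms under the dilation $D_{\alpha(t)}\colon x\mapsto\alpha(t)\cdot x$ of $M$; once the rescaled equation \eqref{eq:ResPAM} is verified, its Feynman--Kac representation \eqref{eq:FKRes} follows either by applying Proposition \ref{prop:FK}(ii) on the rescaled space $(\Sigma,g^{t})\cong\mathbb{H}_{\alpha(t)}^{d}$ or, more transparently, from the matching scaling relation for Brownian motion.

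The first step is the intertwining identity for the Laplacian. In the geodesic polar chart of $M$ the dilation is simply $(\rho,\sigma)\mapsto(\alpha(t)\rho,\sigma)$, so $\partial_{\rho}^{j}(u\circ D_{\alpha(t)})=\alpha(t)^{j}\,(\partial_{\rho}^{j}u)\circ D_{\alpha(t)}$ for $j=1,2$ while $\Delta_{\sigma}$ commutes with $D_{\alpha(t)}$; comparing coefficients in \eqref{eq:DeltaPolar} and in the definition of ${\cal L}^{t}$ gives
\begin{equation}
{\cal L}^{t}\big(u\circ D_{\alpha(t)}\big)=\alpha^{2}(t)\,\big((\Delta u)\circ D_{\alpha(t)}\big)\label{eq:dilint}
\end{equation}
for every smooth $u$ on $M$. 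Granting \eqref{eq:dilint}, one fixes $s$, writes $\tau=\alpha^{2}(t)s$ and $y=\alpha(t)\cdot x$, and differentiates the definition \eqref{eq:RescaleLocalPAM} of $v^{t}$ in $s$, using that $u_{R\alpha(t)}$ solves \eqref{eq:LocalPAM}: the Laplacian term turns into ${\cal L}^{t}v^{t}$ by \eqref{eq:dilint}, and the potential term, after accounting for the exponential prefactor, recombines into $\xi_{t}v^{t}$ because $\xi_{t}(x)=\alpha^{2}(t)(\xi(\alpha(t)\cdot x)-H(t)/t)$. The initial condition is inherited from $u_{R\alpha(t)}(0,\cdot)\equiv1$ and the Dirichlet condition from the fact that in polar coordinates $d(o,\alpha(t)\cdot(\rho,\sigma))=\alpha(t)\rho$, so $D_{\alpha(t)}$ maps $\Sigma_{R}$ bijectively onto $Q_{R\alpha(t)}$ and $\{\rho=R\}$ onto $\partial Q_{R\alpha(t)}$, where $u_{R\alpha(t)}$ vanishes. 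Since \eqref{eq:ResPAM} is a linear, uniformly parabolic Cauchy--Dirichlet problem on a bounded domain with smooth potential, it has a unique solution, which is therefore $v^{t}$.

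For the Feynman--Kac representation I would invoke the probabilistic shadow of \eqref{eq:dilint}: if $W^{t,x}$ is a Brownian motion on $(\Sigma,g^{t})$ (generated by ${\cal L}^{t}$) started at $x$, then $\tau\mapsto\alpha(t)\cdot W^{t,x}_{\tau/\alpha^{2}(t)}$ is a hyperbolic Brownian motion on $M$ started at $\alpha(t)\cdot x$ --- a spatial dilation composed with a deterministic time change by $\alpha^{-2}(t)$ --- which follows from the same Dynkin-martingale computation that produced \eqref{eq:dilint}. Substituting this into the Feynman--Kac formula for $u_{R\alpha(t)}$ from Proposition \ref{prop:FK}(ii), the exponent $\int_{0}^{\alpha^{2}(t)s}\xi(W_{r})\,dr$ becomes $\int_{0}^{s}\xi_{t}(W^{t,x}_{r})\,dr+s\alpha^{2}(t)H(t)/t$ (again by the definition of $\xi_{t}$), the confinement event $\{W([0,\alpha^{2}(t)s])\subseteq Q_{R\alpha(t)}\}$ translates into $\{W^{t,x}([0,s])\subseteq\Sigma_{R}\}$ by the same radial identity, and cancelling $e^{s\alpha^{2}(t)H(t)/t}$ against the prefactor in \eqref{eq:RescaleLocalPAM} yields \eqref{eq:FKRes}. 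Alternatively, one simply notes that \eqref{eq:ResPAM} is itself a localised PAM of exactly the form treated in Proposition \ref{prop:FK}(ii), posed on the geodesic ball $\Sigma_{R}$ of $(\Sigma,g^{t})$ with potential $\xi_{t}$, and quotes that proposition directly.

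None of this is deep, and all the computations are routine. The places that need a little care are the coefficient bookkeeping behind the intertwining identity \eqref{eq:dilint} and, on the probabilistic side, a clean formulation of the accompanying Brownian scaling --- the combined spatial dilation and time change --- together with the observation, used both for the boundary condition and for the confinement event, that $D_{\alpha(t)}$ multiplies the radial polar coordinate by $\alpha(t)$, so that $\Sigma_{R}$ and $Q_{R\alpha(t)}$ correspond under it. I expect this matching of domains and the precise space/time rescaling of the driving process to be the main, if modest, obstacle.
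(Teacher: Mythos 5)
Your proposal is correct and takes essentially the same route as the paper: the paper's proof is precisely the coefficient bookkeeping you encapsulate in the intertwining identity $\mathcal{L}^{t}(u\circ D_{\alpha(t)})=\alpha^{2}(t)\,(\Delta u)\circ D_{\alpha(t)}$ (carried out there by substituting $r=\alpha(t)\rho$ in \eqref{eq:DeltaPolar}), followed by the chain-rule computation that collects the exponential prefactor and $\alpha^{2}(t)\xi(\alpha(t)\cdot x)$ into $\xi_{t}$; the Feynman--Kac representation is then obtained, as in your second alternative, by reading \eqref{eq:ResPAM} as a localised PAM on $(\Sigma,g^{t})$ and quoting Proposition \ref{prop:FK}(ii).
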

\begin{proof}
By the definition of $v^{t}(s,x),$ one has 
\begin{align*}
\frac{\partial v^{t}}{\partial s}(s,x)= & -\frac{\alpha^{2}(t)H(t)}{t}v^{t}(s,x)+e^{-s\alpha^{2}(t)H(t)/t}\alpha^{2}(t)\partial_{s}u_{R\alpha(t)}\big(\alpha^{2}(t)s,\alpha(t)\cdot x\big)\\
= & -\frac{\alpha^{2}(t)H(t)}{t}v^{t}(s,x)+e^{-s\alpha^{2}(t)H(t)/t}\alpha^{2}(t)\Delta u_{R\alpha(t)}\big(\alpha^{2}(t)s,\alpha(t)\cdot x\big)\\
 & \ \ \ +\alpha^{2}(t)\xi\big(\alpha(t)\cdot x\big)v^{t}(s,x)\\
= & \xi_{t}(x)v^{t}(s,x)+e^{-s\alpha^{2}(t)H(t)/t}\alpha^{2}(t)\Delta u_{R\alpha(t)}\big(\alpha^{2}(t)s,\alpha(t)\cdot x\big).
\end{align*}
By using the expression of $\Delta$ under geodesic polar chart (cf.
(\ref{eq:DeltaPolar})), with the substitution $r=\alpha(t)\rho$
one finds that 
\begin{align*}
 & \alpha^{2}(t)\Delta u_{R\alpha(t)}\big(\alpha^{2}(t)s,(\alpha(t)\rho,\sigma)\big)\\
 & =\alpha^{2}(t)\big[(\partial_{r}^{2}u_{R\alpha(t)}\big(\alpha^{2}(t)s,(\alpha(t)\rho,\sigma)\big)+\coth(\alpha(t)\rho)\partial_{r}u_{R\alpha(t)}\big(\alpha^{2}(t)s,(\alpha(t)\rho,\sigma)\big)\\
 & \ \ \ +\sinh^{-2}(\alpha(t)\rho)\partial_{\sigma}^{2}u_{R\alpha(t)}\big(\alpha^{2}(t)s,(\alpha(t)\rho,\sigma)\big)\big]\\
 & =\partial_{\rho}^{2}\big(u_{R\alpha(t)}(\alpha^{2}(t)s,(\alpha(t)\rho,\sigma))\big)+\alpha(t)\partial_{\rho}\big(u_{R\alpha(t)}(\alpha^{2}(t)s,(\alpha(t)\rho,\sigma))\big)\\
 & \ \ \ +\alpha^{2}(t)\sinh^{-2}(\alpha(t)\rho)\partial_{\sigma}^{2}\big(u_{R\alpha(t)}(\alpha^{2}(t)s,(\alpha(t)\rho,\sigma))\big)\\
 & ={\cal L}^{t}\big(u_{R\alpha(t)}(\alpha^{2}(t)s,(\alpha(t)\rho,\sigma))\big).
\end{align*}
The result thus follows from the observation that ${\cal L}^{t}$
is precisely the Laplacian under curvature $\kappa_{t}\equiv-\alpha^{2}(t)$
with $(\rho,\sigma)$ being viewed as the corresponding geodesic polar
coordinates (cf. (\ref{eq:LapAlp})).
\end{proof}
\noindent \textit{Notation}. Throughout the rest, we will always use
$\mathbb{E}[\cdot]$ to denote the expectation with respect to the
Brownian motion in the Feynman-Kac representation and use $\langle\cdot\rangle$
to denote the expectation with respect to $\xi$.

\subsection{The first order asymptotics}

Before developing the actual proof of Theorem \ref{thm:MainThm},
it is beneficial to point out that the first order asymptotics can
be obtained in a quite straight forward and robust way. 
\begin{prop}
\label{prop:1stOrdAsym}Let $\xi$ be a mean zero Gaussian field on
a complete Riemannian manifold $M$ with continuous covariance function
$C(x,y)\triangleq\mathbb{E}[\xi(x)\xi(y)]$ and constant variance
$C(x,x)\equiv\sigma^{2}$. Let $u(t,x)$ be the solution to the global
PAM defined by (\ref{eq:GlobalPAM}). Then for any positive integer
$p$$,$ one has 
\[
\lim_{t\rightarrow\infty}\frac{1}{H(pt)}\log\langle u(t,x)^{p}\rangle=1
\]
for any $x\in M$, where $H(t)\triangleq\sigma^{2}t^{2}/2$. 
\end{prop}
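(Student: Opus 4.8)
The plan is to combine the Feynman--Kac representation \eqref{eq:FKGlobal} with a soft localisation argument. Since $p$ is a positive integer, we may write $u(t,x)^{p}$ as a product of $p$ independent copies of \eqref{eq:FKGlobal}: letting $W^{(1)},\dots,W^{(p)}$ be i.i.d.\ copies of the Brownian motion $W^{x}$ started at $x$ and independent of $\xi$, and conditioning first on the Brownian paths (so that the exponent becomes a centred Gaussian variable with an explicit variance), Tonelli and the formula $\langle e^{N}\rangle = e^{\mathrm{Var}(N)/2}$ give
\[
\langle u(t,x)^{p}\rangle \;=\; \mathbb{E}\Bigl[\exp\Bigl(\tfrac12\sum_{i,j=1}^{p}\int_{0}^{t}\!\!\int_{0}^{t} C\bigl(W^{(i)}_{s},W^{(j)}_{r}\bigr)\,ds\,dr\Bigr)\Bigr].
\]
This identity is the starting point for both bounds.

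For the upper bound, the Cauchy--Schwarz inequality for the covariance gives $|C(y,z)|\le\sqrt{C(y,y)C(z,z)}=\sigma^{2}$, so the double integral is at most $p^{2}t^{2}\sigma^{2}$ and hence $\langle u(t,x)^{p}\rangle\le e^{\sigma^{2}p^{2}t^{2}/2}=e^{H(pt)}$, i.e.\ $\limsup_{t\to\infty}H(pt)^{-1}\log\langle u(t,x)^{p}\rangle\le1$. For the lower bound, fix $\varepsilon>0$; by continuity of $C$ at $(x,x)$ there is $\delta>0$ such that $C(y,z)\ge\sigma^{2}-\varepsilon$ whenever $d(y,x),d(z,x)<\delta$. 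Restricting the expectation to the event $A_{\delta}$ that all $p$ Brownian motions stay inside the geodesic ball $B(x,\delta)$ up to time $t$, on $A_{\delta}$ the double integral is at least $p^{2}t^{2}(\sigma^{2}-\varepsilon)$, so
\[
\langle u(t,x)^{p}\rangle \;\ge\; e^{p^{2}t^{2}(\sigma^{2}-\varepsilon)/2}\,\mathbb{P}(A_{\delta}) \;=\; e^{p^{2}t^{2}(\sigma^{2}-\varepsilon)/2}\,\bigl(\mathbb{P}_{x}(W([0,t])\subseteq B(x,\delta))\bigr)^{p}.
\]
By the classical exit-time asymptotics for Brownian motion on a complete Riemannian manifold, $-t^{-1}\log\mathbb{P}_{x}(W([0,t])\subseteq B(x,\delta))\to\lambda_{1}(B(x,\delta))<\infty$, so $\mathbb{P}(A_{\delta})\ge e^{-c(\delta)t}$ for all large $t$. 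Dividing by $H(pt)=\sigma^{2}p^{2}t^{2}/2$ and sending $t\to\infty$ yields $\liminf_{t\to\infty}H(pt)^{-1}\log\langle u(t,x)^{p}\rangle\ge(\sigma^{2}-\varepsilon)/\sigma^{2}$, and letting $\varepsilon\downarrow0$ completes the argument.

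The only input beyond elementary manipulations is the small-ball estimate $\mathbb{P}_{x}(W([0,t])\subseteq B(x,\delta))\ge e^{-c(\delta)t}$; this is standard (eigenfunction expansion of the Dirichlet heat semigroup on $B(x,\delta)$, or a Markov-property iteration), and in fact any crude bound of the form $e^{-o(t^{2})}$ would suffice, since it is absorbed by the $t^{-2}$ normalisation from $H(pt)$. I therefore do not expect a genuine obstacle here: the robustness of the proposition reflects precisely the fact that the first-order growth $t^{2}$ overwhelms the exponential-in-$t$ cost of localisation, which is the point the statement is meant to emphasise.
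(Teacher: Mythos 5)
Your proposal is correct and follows essentially the same route as the paper's own proof: Feynman--Kac plus the Gaussian moment formula to rewrite $\langle u(t,x)^p\rangle$, the pointwise bound $|C(y,z)|\le\sigma^2$ for the upper estimate, and localisation to a small geodesic ball with the exponential exit-time lower bound for the lower estimate. The paper cites \cite{CM95} for this argument; your reasoning matches it step for step.
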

\begin{proof}
This is essentially the same as the proof of \cite[Theorem 4.1]{CM95}
and we reproduce the argument for the sake of completeness.
First of all, according to the Feynman-Kac representation (\ref{eq:FKGlobal}),
one can write 
\[
\langle u(t,x)^{p}\rangle=\mathbb{E}_{x}\big[\exp\big(\frac{1}{2}\sum_{i,j=1}^{p}\int_{0}^{t}\int_{0}^{t}C(W_{u}^{i},W_{v}^{j})dudv\big)\big].
\]
Here $W^{1},\cdots,W^{d}$ are independent Brownian motions (Markov
processes generated by $\Delta$) on $M$ all starting at $x$. Since
$|C(x,y)|\leqslant\sigma^{2},$ one immediately obtains that 
\[
\langle u(t,x)^{p}\rangle\leqslant e^{H(pt)}\ \text{or }\frac{1}{H(pt)}\log\langle u(t,x)^{p}\rangle\leqslant1.
\]

For the other direction, given $\varepsilon>0$ let $\delta$ be such
that 
\[
y,z\in B(x,\delta)\triangleq\{y\in M:d(y,x)<\delta\}\implies C(y,z)\geqslant\sigma^{2}-\varepsilon,
\]
This is possible due to the continuity of $B$. One then has the following
localised estimate:
\begin{align}
\langle u(t,x)^{p}\rangle & \geqslant\mathbb{E}_{x}\big[\exp\big(\frac{1}{2}\sum_{i,j=1}^{p}\int_{0}^{t}\int_{0}^{t}C(W_{u}^{i},W_{v}^{j})dudv\big);W^{i}([0,t])\subseteq B(x,\delta)\ \forall i\big]\nonumber \\
 & \geqslant\exp\big(\frac{1}{2}p^{2}(\sigma^{2}-\varepsilon)t^{2}\big)\times\mathbb{P}\big(W^{1}([0,t])\subseteq B(x,\delta)\big)^{p}.\label{eq:Lower1OrdPf}
\end{align}
On the other hand, it is standard that 
\begin{equation}
\mathbb{P}\big(W^{1}([0,t])\subseteq B(x,\delta)\big)\sim Ke^{-\lambda_{\delta}t}\ \ \ \text{as }t\rightarrow\infty,\label{eq:ExitTDecay}
\end{equation}
where $K$ is a positive constant and $\lambda_{\delta}\in(0,\infty)$
is the principal Dirichlet eigenvalue of $\Delta$ on $B(x,\delta)$.
After substituting (\ref{eq:ExitTDecay}) into (\ref{eq:Lower1OrdPf}),
one finds that 
\[
\underset{t\rightarrow\infty}{\underline{\lim}}\frac{1}{H(pt)}\log\langle u(t,x)^{p}\rangle\geqslant1-\frac{\varepsilon}{\sigma^{2}}.
\]
The desired lower asymptotics follows by taking $\varepsilon\downarrow0$.
\end{proof}
The main effort of the present article is thus to establish the fluctuation
asymptotics (\ref{eq:MainAsym}). In what follows, we develop the
main ingredients its the proof precisely. We begin by establishing
the lower $L^{1}$ asymptotics (i.e. $p=1$) in Section \ref{sec:Lower}
and then partially using it to establish the upper $L^{1}$ asymptotics
in Section \ref{sec:Upper}. After that, we identify the fluctuation
exponent in Section \ref{subsec:FlucExp}. Finally, we extend the
argument to the $p$-th moment asymptotics in Section \ref{sec:LpAsym}.

\section{\label{sec:Lower}The lower $L^{1}$ asymptotics}

The proof of Theorem \ref{thm:MainThm} contains two separate parts:
the lower and upper asymptotics. We will first derive the lower asymptotics
and then derive the upper asymptotics (whose proof will make use of
the lower bound at some point). For both asymptotics, we will focus
on the $L^{1}$ case. The general $p$-th moment case is discussed
in Section \ref{sec:LpAsym}.

Our first step is to establish the following result. Recall that $u$
is the solution to the PAM (\ref{eq:GlobalPAM}) under Gaussian potential
$\xi$. For each $R>0,$ we define the exponent
\begin{equation}
\chi_{R}\triangleq\inf\Big\{ J(\mu)+\sup_{f\in C_{b}(\Sigma_{R})}\{\int_{\Sigma_{R}}fd\mu+\lambda_{0}^{{\rm eu};f,R}\}:\mu\in{\cal P}(\Sigma_{R})\Big\}.\label{eq:ChiR}
\end{equation}

\begin{thm}
\label{thm:LowerBd}One has 
\begin{equation}
\underset{t\rightarrow\infty}{\underline{\lim}}\frac{1}{\beta(t)}\log\big(e^{-H(t)}\langle u(t,0)\rangle\big)\geqslant\underset{R\rightarrow\infty}{\overline{\lim}}(-\chi_{R}).\label{eq:LowerBd}
\end{equation}
\end{thm}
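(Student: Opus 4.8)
The plan is to establish the lower bound by first restricting the global PAM to a fixed geodesic ball, then passing through the rescaled equation to a large-deviation/variational estimate over the fixed Euclidean-type domain $\Sigma_R$. The starting point is positivity: since $\xi$ and the heat semigroup preserve positivity, the Feynman–Kac representation (\ref{eq:FKGlobal}) gives $u(t,o) \geqslant u_{R\alpha(t)}(t,o)$ for every $R,t>0$, where $u_{R\alpha(t)}$ is the Dirichlet-localised solution on $Q_{R\alpha(t)}$ from Proposition \ref{prop:FK}(ii). Rescaling via (\ref{eq:RescaleLocalPAM}) and Lemma \ref{lem:ResPAM}, this becomes
\[
  e^{-H(t)} u(t,o) \;\geqslant\; e^{-H(t)} e^{\beta(t)\alpha^2(t)H(t)/(t\alpha^2(t))}\, v^t(\beta(t), o) \;=\; v^t(\beta(t), o),
\]
after checking that the normalising exponential factor in (\ref{eq:RescaleLocalPAM}) is exactly matched to $e^{-H(t)}$ at time $s = \beta(t) = \alpha^{-2}(t)\,t$. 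So it suffices to bound $\tfrac{1}{\beta(t)}\log\langle v^t(\beta(t),o)\rangle$ from below by $-\chi_R + o_t(1)$ for each fixed $R$, and then take $R\to\infty$.

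Next I would compute the annealed moment of $v^t(\beta(t),o)$ using the Feynman–Kac representation (\ref{eq:FKRes}): conditioning on the Brownian path $W^t$ and integrating out the Gaussian field,
\[
  \langle v^t(\beta(t),o)\rangle \;=\; \EE_o\Big[ \exp\big(-\beta(t) J_t(L^t_{\beta(t)})\big)\,;\, W^t([0,\beta(t)])\subseteq \Sigma_R \Big],
\]
where $L^t_s$ is the occupation measure (\ref{eq:OTMP}) and $J_t$ the functional (\ref{eq:JtDef}); this is the identity sketched in Section \ref{sec:Strat}. A lower bound for this expectation over the \emph{compact} state space $\Sigma_R$ is exactly the regime where the Donsker–Varadhan large deviation lower bound is available for the family $\{\mathrm{Law}(L^t_{\beta(t)})\}_t$ — but here the underlying process $W^t$ is Brownian motion on $(\Sigma,g^t)$, whose generator $\mathcal L^t$ depends on $t$. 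The plan is to invoke the abstract Gärtner–Ellis LDP in the topological vector space of signed measures (the abGEthm advertised in the excerpt), whose hypothesis is the convergence of normalised logarithmic moment generating functionals; since $\mathcal L^t \to \Delta_{\mathrm{eu}}$ as $\alpha(t)\to 0$, the associated Dirichlet-form / Feynman–Kac functionals $f\mapsto \lambda_0^{t;f,R}$ converge to their Euclidean counterparts $f\mapsto \lambda_0^{\mathrm{eu};f,R}$ (this is the content of the spectral convergence lemma referenced as Lemma \ref{lem:ConvLogMGF}). Combined with the uniform convergence $J_t\to J$ on $\mathcal P(\Sigma_R)$ from Lemma \ref{lem:JtConv}, Varadhan's lemma (lower half) yields
\[
  \varliminf_{t\to\infty} \frac{1}{\beta(t)}\log \langle v^t(\beta(t),o)\rangle
  \;\geqslant\; \sup_{\mu\in\mathcal P(\Sigma_R)}\Big\{ -J(\mu) - \sup_{f\in C_b(\Sigma_R)}\big(\textstyle\int_{\Sigma_R} f\,d\mu + \lambda_0^{\mathrm{eu};f,R}\big)\Big\} = -\chi_R,
\]
using the dual (Legendre–Fenchel) representation of $\mathcal S_{\mathrm{eu}}$ that produces precisely the inf-sup appearing in (\ref{eq:ChiR}). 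Taking $\varlimsup_{R\to\infty}$ on the right gives (\ref{eq:LowerBd}).

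The main obstacle, I expect, is making the "$\mathcal L^t \to \Delta_{\mathrm{eu}}$" heuristic into a genuine verification of the Gärtner–Ellis hypotheses \emph{uniformly enough in the tilt $f$} to pass through Varadhan's lemma. Concretely one must show that for every continuous $f$ on $\Sigma_R$ the scaled log-MGF $\tfrac{1}{\beta(t)}\log \EE_o[\exp(\beta(t)(L^t_{\beta(t)},f))\,;\,W^t\subseteq\Sigma_R]$ converges to $\lambda_0^{\mathrm{eu};f,R}$, i.e. a family of principal Dirichlet eigenvalues of $\mathcal L^t + f$ on $\Sigma_R$ converges to that of $\Delta_{\mathrm{eu}} + f$. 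Because the metric $g^t$ degenerates to the flat metric only in the limit and the radial coefficient $\alpha(t)\coth(\alpha(t)\rho)$ behaves like $1/\rho$ near $\rho=0$ uniformly in $t$, this should follow from a Mosco/$\Gamma$-convergence argument for the quadratic forms together with uniform ellipticity on $\Sigma_R$; one also needs the Laplace-principle identification of the long-time exponential rate with the principal eigenvalue, which requires a lower spectral gap bound uniform in $t$ on the fixed compact ball. A secondary technical point is ensuring the normalisation bookkeeping in (\ref{eq:RescaleLocalPAM}) is airtight, and that the definition (\ref{eq:ChiR}) of $\chi_R$ (with its inner $\sup$ over $C_b$) really is the Legendre dual that Varadhan's lemma delivers; this is where the explicit form of $\mathcal S_{\mathrm{eu}}$ in (\ref{eq:DVFunc}) and its variational characterisation are used.
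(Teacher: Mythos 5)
Your proposal is essentially correct and follows the paper's own strategy: localise to a fixed ball, rescale via \eqref{eq:RescaleLocalPAM} so that $e^{-H(t)}u_{R\alpha(t)}(t,o)=v^t(\beta(t),o)$, integrate out $\xi$ to obtain $\langle v^t(\beta(t),o)\rangle=\mathbb{E}_o[e^{-\beta(t)J_t(L^t_{\beta(t)})};W^t\subseteq\Sigma_R]$, and then apply a Varadhan-type lower bound for the conditioned occupation-measure process built on the abstract G\"artner--Ellis LDP (Proposition \ref{prop:LocalLDP}, Lemma \ref{lem:ConvLogMGF}) together with the uniform convergence $J_t\to J$ from Lemma \ref{lem:JtConv}. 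Your bookkeeping that the prefactor in \eqref{eq:RescaleLocalPAM} evaluated at $s=\beta(t)$ equals $e^{-H(t)}$ exactly is correct, and you correctly identify the key technical obstacle (justifying the G\"artner--Ellis hypotheses via the spectral convergence $\lambda_0^{t;f,R}\to\lambda_0^{\mathrm{eu};f,R}$).

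The one genuine difference from the paper is that you compare pointwise at the origin, $u(t,o)\geqslant u_{R\alpha(t)}(t,o)$, whereas the paper first averages $\langle u(t,\cdot)\rangle$ over $Q_{R\alpha(t)}$ using stationarity and only then localises, picking up an extra $O(\alpha(t)^d)$ volume factor that is logarithmically negligible. For the $L^1$ lower bound, your pointwise version is slightly cleaner and gives the same conclusion (since Lemma \ref{lem:VaradLow} bounds the infimum over all starting points $\rho\leqslant R-\varepsilon$, which includes $\rho=0$). The paper's choice to average is not an accident, though: the averaged statement is precisely what gets re-used in the $L^p$ section, where one needs the lower bound for $\langle(u_{R\alpha(pt)}(pt,\cdot),\mathbf{1})\rangle$ rather than for the pointwise moment. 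So your route proves Theorem \ref{thm:LowerBd}, but you would need a small supplementary argument (or simply the paper's averaging version) when extending to $p>1$ in Section \ref{sec:LpAsym}. One small point of interpretation: in the paper, the convergence in Lemma \ref{lem:ConvLogMGF} is proved by perturbation theory for the Dirichlet eigenpairs of $\mathcal{L}^t+f$ on the fixed compact ball (Lemma \ref{lem:VfAsymp}), rather than by Mosco/$\Gamma$-convergence of forms as you suggest, but these are alternative routes to the same spectral statement.
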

In the following subsections, we develop the main ingredients for
the proof of Theorem \ref{thm:LowerBd}.

\subsection{An eigenvalue asymptotics lemma}

We first establish a lemma about asymptotics of principal eigenvalues
that will be useful for proving the theorem. Recall that $(\Sigma_{R},g^{t})$
($0<t\leqslant\infty$) is the closed geodesic $R$-ball under curvature
$\kappa_{t}\equiv-\alpha(t)^{2}$.
\begin{lem}
\label{lem:VfAsymp}Let $f\in C_{b}(\Sigma_{R})$ (space of bounded
continuous functions on $\Sigma_{R}$) be given fixed. Let $v(s,(\rho,\sigma))$
be the solution to the following initial-boundary value problem:
\begin{equation}
\begin{cases}
\partial_{s}v={\cal L}^{t}v+fv, & (s,(\rho,\sigma))\in(0,\infty)\times\Sigma_{R};\\
v(s,\cdot)=0 & \text{on }\{\rho=R\};\\
v(0,\cdot)\equiv1.
\end{cases}\label{eq:ResPAMF}
\end{equation}
Then one has 
\[
\lim_{t\rightarrow\infty}\frac{1}{\beta(t)}\log v(\beta(t),(\rho,\sigma))=-\lambda_{0}^{{\rm eu}}
\]
uniformly for $(\rho,\sigma)\in[0,R-\varepsilon]\times\mathbb{S}^{d-1}$
for any given fixed $\varepsilon\in(0,R)$. Here $\lambda_{0}^{{\rm eu}}$
denotes the principal Dirichlet eigenvalue of $\Delta_{{\rm eu}}^{+}-f$
on $(\Sigma_{R},g^{\infty})$ with $\Delta_{{\rm eu}}^{+}$ being
the positive Euclidean Laplacian.
\end{lem}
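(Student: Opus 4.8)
The argument will be spectral. The plan is to write the solution as $v(s,\cdot)=P^{t}_{s}{\bf 1}$, where $\{P^{t}_{s}\}_{s\geqslant0}$ is the Dirichlet semigroup on $(\Sigma_{R},g^{t})$ generated by ${\cal L}^{t}+f$, to expand $v$ along the eigenbasis of this operator, and then to let $t\to\infty$ and $s=\beta(t)\to\infty$ simultaneously, absorbing the $t$-dependence into a few uniform spectral estimates. Since $\Sigma_{R}$ is compact and $f$ is bounded, ${\cal L}^{t}+f$ with Dirichlet condition on $\{\rho=R\}$ is self-adjoint and bounded above on $L^{2}(\Sigma_{R},{\rm vol}_{g^{t}})$, with discrete spectrum; I write $\lambda^{t}_{0}<\lambda^{t}_{1}\leqslant\lambda^{t}_{2}\leqslant\cdots\to\infty$ for the eigenvalues of $-{\cal L}^{t}-f$ and $\{\phi^{t}_{k}\}$ for an orthonormal eigenbasis, with principal eigenfunction $\phi^{t}_{0}>0$ on $\{\rho<R\}$. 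Then $v(s,x)=\sum_{k}c^{t}_{k}e^{-\lambda^{t}_{k}s}\phi^{t}_{k}(x)$ with $c^{t}_{k}=\int_{\Sigma_{R}}\phi^{t}_{k}\,{\rm vol}_{g^{t}}$, and in particular $c^{t}_{0}>0$.

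The key geometric input is that $g^{t}=d\rho^{2}+\alpha(t)^{-2}\sinh^{2}(\alpha(t)\rho)\,d\sigma^{2}$ converges in $C^{\infty}(\Sigma_{R})$ to the Euclidean metric $g^{\infty}=d\rho^{2}+\rho^{2}\,d\sigma^{2}$ as $\alpha(t)\downarrow0$: indeed $\alpha^{-2}\sinh^{2}(\alpha\rho)=\rho^{2}\big(\sinh(\alpha\rho)/(\alpha\rho)\big)^{2}$ is a smooth function of $\alpha^{2}$ converging, together with all $\rho$-derivatives, uniformly on $[0,R]$ to $\rho^{2}$. From this I would extract three uniform ingredients. (i) Since ${\rm vol}_{g^{t}}\to{\rm vol}_{g^{\infty}}$ and the associated quadratic forms $u\mapsto\int_{\Sigma_{R}}\big(|\nabla u|^{2}_{g^{t}}-fu^{2}\big)\,{\rm vol}_{g^{t}}$ on $H^{1}_{0}(\Sigma_{R})$ converge (uniformly on bounded subsets of $H^{1}_{0}$) to the Euclidean one, the min-max principle gives $\lambda^{t}_{k}\to\lambda^{{\rm eu}}_{k}$ for every $k$; in particular $\lambda^{t}_{0}\to\lambda^{{\rm eu}}_{0}$ and, since $\lambda^{{\rm eu}}_{0}<\lambda^{{\rm eu}}_{1}$, there exist $t_{0}$ and $\gamma>0$ with $\lambda^{t}_{1}-\lambda^{t}_{0}\geqslant\gamma$ for all $t\geqslant t_{0}$ (a uniform spectral gap). (ii) Elliptic regularity for the eigenvalue equation, together with the sign normalisation $\phi^{t}_{0}\geqslant0$, gives $\phi^{t}_{0}\to\phi^{{\rm eu}}_{0}$ uniformly on $\Sigma_{R}$, where $\phi^{{\rm eu}}_{0}$ is the principal Dirichlet eigenfunction of $\Delta_{{\rm eu}}^{+}-f$ on $(\Sigma_{R},g^{\infty})$; hence $c^{t}_{0}\to\int_{\Sigma_{R}}\phi^{{\rm eu}}_{0}\,{\rm vol}_{g^{\infty}}>0$, and since $\phi^{{\rm eu}}_{0}>0$ on the open ball we obtain $c^{t}_{0}\phi^{t}_{0}\geqslant c_{1}>0$ uniformly on $[0,R-\varepsilon]\times\mathbb{S}^{d-1}$ for $t\geqslant t_{0}$. (iii) Because the curvature $-\alpha(t)^{2}$ stays bounded, the free Dirichlet heat kernels on $(\Sigma_{R},g^{t})$ obey a uniform on-diagonal bound, whence (using $|f|\leqslant\|f\|_{\infty}$) a uniform short-time smoothing estimate $\sup_{t\geqslant t_{0}}\|P^{t}_{1}\|_{L^{2}\to L^{\infty}}\leqslant C_{2}<\infty$.

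With these in hand, the final step is a ``project, then smooth'' argument. Decomposing ${\bf 1}=\sum_{k}c^{t}_{k}\phi^{t}_{k}$ and using the spectral gap,
\[
P^{t}_{s-1}{\bf 1}=c^{t}_{0}e^{-\lambda^{t}_{0}(s-1)}\phi^{t}_{0}+e^{-\lambda^{t}_{0}(s-1)}r^{t}_{s-1},\qquad\|r^{t}_{s-1}\|_{L^{2}(g^{t})}\leqslant e^{-\gamma(s-1)}\|{\bf 1}\|_{L^{2}(g^{t})}.
\]
Applying $P^{t}_{1}$ and using $P^{t}_{1}\phi^{t}_{0}=e^{-\lambda^{t}_{0}}\phi^{t}_{0}$ together with the bound on $\|P^{t}_{1}\|_{L^{2}\to L^{\infty}}$,
\[
v(s,x)=e^{-\lambda^{t}_{0}s}\big(c^{t}_{0}\phi^{t}_{0}(x)+\eta^{t}_{s}(x)\big),\qquad\|\eta^{t}_{s}\|_{L^{\infty}}\leqslant C_{2}\,e^{\lambda^{t}_{0}}\|{\bf 1}\|_{L^{2}(g^{t})}\,e^{-\gamma(s-1)}.
\]
Since $\lambda^{t}_{0}$, $\|{\bf 1}\|_{L^{2}(g^{t})}$ and $C_{2}$ are uniformly bounded for $t\geqslant t_{0}$, taking $s=\beta(t)\to\infty$ gives $\|\eta^{t}_{\beta(t)}\|_{L^{\infty}}\to0$; combining with $c^{t}_{0}\phi^{t}_{0}\geqslant c_{1}>0$ on $[0,R-\varepsilon]\times\mathbb{S}^{d-1}$ yields $0<c_{1}/2\leqslant e^{\lambda^{t}_{0}\beta(t)}v(\beta(t),x)\leqslant C_{3}$ there for $t$ large, whence $\frac{1}{\beta(t)}\log v(\beta(t),(\rho,\sigma))=-\lambda^{t}_{0}+o(1)\to-\lambda^{{\rm eu}}_{0}$ uniformly on $[0,R-\varepsilon]\times\mathbb{S}^{d-1}$.

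The spectral expansion and the closing arithmetic are routine; the real work, and the main obstacle, is making the middle step uniform in $t$. Because the clock $s=\beta(t)$ and the operator ${\cal L}^{t}$ move together, one cannot first take the long-time limit for a fixed operator and only afterwards let $t\to\infty$: the spectral gap, the short-time smoothing norm, and the principal eigenfunction — both its positivity on $[0,R-\varepsilon]$ and its $L^{2}$-pairing $c^{t}_{0}$ with ${\bf 1}$ — must all be controlled uniformly over $t\geqslant t_{0}$. This is exactly what the $C^{\infty}$-convergence $g^{t}\to g^{\infty}$ on the fixed compact ball $\Sigma_{R}$ provides: it reduces each requirement to a stability statement for the single limiting elliptic operator $\Delta_{{\rm eu}}^{+}-f$ on the Euclidean ball. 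Finally, the apparent singularity of the polar coordinates at $\rho=0$ is harmless, since $\Sigma_{R}$ is a genuine smooth ball on which each ${\cal L}^{t}+f$ extends to a smooth, uniformly elliptic operator.
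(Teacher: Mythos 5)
Your proposal is correct and follows essentially the same spectral strategy as the paper: both expand $v$ in the Dirichlet eigenbasis of ${\cal L}^t+f$, extract $-\lambda_0^t$, and kill the tail using a uniform-in-$t$ spectral gap together with uniform control of the principal eigenmode away from $\partial\Sigma_R$ and a uniform short-time heat-kernel/smoothing bound. The only cosmetic differences are in the bookkeeping for the tail (the paper uses Cauchy--Schwarz against $e^{\lambda_0^t}\bigl(H^t(1,x,x)-e^{-\lambda_0^t}\phi_0^t(x)^2\bigr)$, you run the semigroup to time $s-1$ in $L^2$ and then apply a uniform $\|P^t_1\|_{L^2\to L^\infty}$ bound), and that you spell out, via the $C^\infty$-convergence $g^t\to g^\infty$ on the fixed ball, the ``standard perturbation facts'' that the paper simply cites from Reed--Simon.
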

\begin{proof}
Let $\{(\lambda_{n}^{t},\phi_{n}^{t}):n\geqslant0\}$ be the spectral
decomposition of $-({\cal L}^{t}+f)$ on $\Sigma_{R}$ with Dirichlet
boundary condition. Here ${\cal L}^{t}$ is the $g^{t}$-Laplacian
and the underlying measure on $\Sigma_{R}$ is the Riemannian volume
measure $d^{t}x$ induced by $g^{t}$. More specifically, 
\[
\lambda_{0}^{t}<\lambda_{1}^{t}\leqslant\lambda_{2}^{t}\leqslant\cdots\uparrow\infty
\]
are the Dirichlet eigenvalues of $-({\cal L}^{t}+f)$ on $\Sigma_{R}$
and $\{\phi_{n}^{t}:n\geqslant0\}$ is the corresponding ONB of $L^{2}(\Sigma_{R},d^{t}x)$
consisting of Dirichlet eigenfunctions. It is well-known from standard
spectral theory that $\lambda_{0}^{t}>0$ and $\phi_{0}^{t}$ is strictly
positive in $\Sigma_{R}.$ Under the spectral decomposition, one can
write 
\[
v(\beta(t),x)=\sum_{n=0}^{\infty}e^{-\lambda_{n}^{t}\cdot\beta(t)}\langle\phi_{n}^{t},{\bf 1}\rangle_{t} \, \phi_{n}^{t}(x),
\]
where $x=(\rho,\sigma)$ and $\langle\cdot,\cdot\rangle_{t}$ is the
$L^{2}$-inner product with respect to the volume measure $d^{t}x.$
It follows that 
\begin{equation}
\frac{1}{\beta(t)}\log v(\beta(t),x)=-\lambda_{0}^{t}+\frac{1}{\beta(t)}\log\big(\eta(t,x)+A(t,x)\big),\label{eq:DecomLogv}
\end{equation}
where 
\[
\eta(t,x)\triangleq\langle\phi_{0}^{t},{\bf 1}\rangle_{t}\phi_{0}^{t}(x),\ A(t,x)\triangleq\sum_{n=1}^{\infty}e^{-(\lambda_{n}^{t}-\lambda_{0}^{t})\beta(t)}\langle\phi_{n}^{t},{\bf 1}\rangle_{t} \, \phi_{n}^{t}(x).
\]
\textcolor{black}{Since $g^{t}$ converges to the Euclidean metric
$g^{{\rm eu}}$ as $t\rightarrow\infty$ on the compact space $\Sigma_{R}$,
the following facts are standard from perturbation theory (cf. \cite{RS78}).}

\vspace{2mm}\noindent \textcolor{black}{(i) }${\color{black}\underset{t\rightarrow\infty}{\lim}\lambda_{0}^{t}=\lambda_{0}^{{\rm eu}}}$.

\vspace{2mm}\noindent \textcolor{black}{(ii) There exist constants
}${\color{black}0<\eta_{1}<\eta_{2}}$ \textcolor{black}{such that }
\begin{equation}
{\color{black}\eta_{1}\leqslant\eta(t,(\rho,\sigma))\leqslant\eta_{2}\ \ \ \text{for all large }t\ \text{and}\ \rho\leqslant R-\varepsilon.}\label{eq:PertP2}
\end{equation}
\textcolor{black}{(iii) There exists $\delta>0$ such that 
\begin{equation}
\lambda_{1}^{t}-\lambda_{0}^{t}>\delta\ \ \ \text{for all large \ensuremath{t}}.\label{eq:PertP3}
\end{equation}
(iv) Let $H^{t}(s,x,y)$ be the Dirichlet heat kernel for ${\cal L}^{t}+f$
on $\Sigma_{R}.$ Then one has 
\begin{equation}
\sup_{t\geqslant1,x\in\Sigma_{R}}\big|e^{\lambda_{0}^{t}}\big(H^{t}(1,x,x)-e^{-\lambda_{0}^{t}}\phi_{0}^{t}(x)^{2}\big)\big|<\infty.\label{eq:PertP4}
\end{equation}
}

\vspace{2mm} We now proceed to estimate $A(t,x).$ Firstly, by using
the Cauchy-Schwarz inequality one has 
\begin{align*}
|A(t,x)| & \leqslant\sqrt{\sum_{n=1}^{\infty}\langle\phi_{n}^{t},{\bf 1}\rangle_{t}^{2}}\cdot\sqrt{\sum_{n=1}^{\infty}e^{-2(\lambda_{n}^{t}-\lambda_{0}^{t})\beta(t)}\phi_{n}^{t}(x)^{2}}\\
 & \leqslant\|{\bf 1}\|_{t}\cdot\sqrt{\sum_{n=1}^{\infty}e^{-2(\lambda_{n}^{t}-\lambda_{0}^{t})\beta(t)}\phi_{n}^{t}(x)^{2}}.
\end{align*}
In addition, one also has
\begin{align*}
\sum_{n=1}^{\infty}e^{-2(\lambda_{n}^{t}-\lambda_{0}^{t})\beta(t)}\phi_{n}^{t}(x)^{2} & \leqslant e^{-(\lambda_{1}^{t}-\lambda_{0}^{t})\beta(t)}\cdot\sum_{n=1}^{\infty}e^{-(\lambda_{n}^{t}-\lambda_{0}^{t})}\phi_{n}^{t}(x)^{2}\\
 & =e^{-(\lambda_{1}^{t}-\lambda_{0}^{t})\beta(t)}\cdot e^{\lambda_{0}^{t}}\big(H^{t}(1,x,x)-e^{-\lambda_{0}^{t}}\phi_{0}^{t}(x)^{2}\big).
\end{align*}
According to the properties (\ref{eq:PertP3}) and (\ref{eq:PertP4}),
one finds that 
\begin{equation}
|A(t,x)|\leqslant Me^{-\delta\beta(t)}\ \text{for all large \ensuremath{t},}\label{eq:EigenAEst}
\end{equation}
where $M,\delta$ are some positive constants independent of $t,x$.
It follows from (\ref{eq:EigenAEst}) and (\ref{eq:PertP2}) that
\[
\log\big(\eta_{1}-Me^{-\delta\beta(t)}\big)\leqslant\log\big(\eta(t,x)+A(t,x)\big)\leqslant\log(\eta_{2}+Me^{-\delta\beta(t)})
\]
uniformly for all large $t$ and $\rho\leqslant R-\varepsilon.$ Now
the desired result follows from the above Property (i) as well as
the relation (\ref{eq:DecomLogv}).
\end{proof}

\subsection{\label{subsec:LDP}A large deviation principle for localised hyperbolic
Brownian motions}

The core ingredient for proving Theorem \ref{thm:LowerBd} is a suitable
LDP which we describe as follows. 

We first make some preparations. Let $E\triangleq{\cal P}(\Sigma_{R})$
(respectively, $X\triangleq{\cal M}(\Sigma_{R})$) denote the space
of probability measures (respectively, finite signed measures) over
$\Sigma_{R}.$ The topology on $X$ is chosen to be the one generated
by 
\begin{equation}
\big\{\beta\in X:\big|\int_{\Sigma}fd(\beta-\alpha)\big|<r\big\}\ \ \ (f\in C_{b}(\Sigma_{R}),\alpha\in X).\label{eq:Top}
\end{equation}
The following facts are standard and can be found in \cite[Section 3.2]{DS89}.
\begin{lem}
\label{lem:Riesz}(i) The topology $\tau$ restricted on $E$ is exactly
the topology of weak convergence.

\vspace{2mm}\noindent (ii) The representation 
\begin{equation}
f\mapsto\big[\alpha\mapsto\int_{\Sigma}fd\alpha\big]\label{eq:Riesz}
\end{equation}
is a linear isomorphism between $C_{b}(\Sigma)$ and $X^{*}$.
\end{lem}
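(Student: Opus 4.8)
The plan is to reduce everything to classical functional analysis by exploiting that $\Sigma_R$ is a compact metric space (topologically it is a closed Euclidean ball). Two standard facts set the stage: compactness gives $C_b(\Sigma_R)=C(\Sigma_R)$, and the Riesz--Markov--Kakutani representation theorem identifies $X=\mathcal{M}(\Sigma_R)$ isometrically with the Banach dual $C(\Sigma_R)^*$ via the pairing $\alpha\mapsto\big(f\mapsto\int_{\Sigma_R}f\,d\alpha\big)$. Under this identification the topology $\tau$ generated by the sets in \eqref{eq:Top} is exactly the weak-$*$ topology $\sigma\big(C(\Sigma_R)^*,C(\Sigma_R)\big)$: by construction $\tau$ is the coarsest topology on $X$ making every evaluation map $L_f\colon\alpha\mapsto\int f\,d\alpha$ (with $f\in C(\Sigma_R)$) continuous, which is the defining property of the weak-$*$ topology. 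With this set-up both assertions become bookkeeping.

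For part (i): the topology of weak convergence on $E=\mathcal{P}(\Sigma_R)$ is, by definition, the coarsest topology making $\mu\mapsto\int f\,d\mu$ continuous for every $f\in C_b(\Sigma_R)=C(\Sigma_R)$, and the subspace topology that an initial topology induces on a subset is precisely the initial topology for the restricted family of maps. Hence $\tau|_E$ and the weak topology coincide as topologies; equivalently, a net $\mu_\lambda\to\mu$ in $\tau|_E$ iff $\int f\,d\mu_\lambda\to\int f\,d\mu$ for every $f\in C(\Sigma_R)$.

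For part (ii): linearity of $f\mapsto L_f$ is immediate, and injectivity follows by testing against Dirac masses, since $f(x)=L_f(\delta_x)$ forces $f\equiv 0$ whenever $L_f\equiv 0$ on $X$. Each $L_f$ is $\tau$-continuous directly from the definition of $\tau$ (the sets in \eqref{eq:Top} are exactly the preimages under $L_f$ of open intervals), so $f\mapsto L_f$ maps $C(\Sigma_R)$ into $X^*$. For surjectivity onto $X^*$ I would invoke the standard fact that the dual of a vector space carrying the weak topology induced by a separating subspace of linear functionals is exactly that subspace: given $\Lambda\in X^*$, continuity at $0$ supplies $f_1,\dots,f_n\in C(\Sigma_R)$ and $r>0$ with $\bigcap_i\{\beta:|\!\int f_i\,d\beta|<r\}\subseteq\{\beta:|\Lambda(\beta)|<1\}$; scaling yields $\bigcap_i\ker L_{f_i}\subseteq\ker\Lambda$, and the elementary lemma that a functional annihilating a finite intersection of kernels lies in the span of the corresponding functionals produces $\Lambda=\sum_i c_i L_{f_i}=L_f$ with $f=\sum_i c_i f_i\in C(\Sigma_R)$. (Alternatively one may simply quote that the dual of a dual Banach space under its weak-$*$ topology is canonically the predual.) Combined with injectivity and linearity, this gives the claimed isomorphism \eqref{eq:Riesz} between $C_b(\Sigma_R)$ and $X^*$.

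There is no genuine obstacle here; the argument is routine and, as the paper notes, is carried out in \cite[Section~3.2]{DS89}. The only point worth flagging is that compactness of $\Sigma_R$ is used twice — to get $C_b(\Sigma_R)=C(\Sigma_R)$, so that the generating family in \eqref{eq:Top} matches both the weak-convergence topology and the Riesz dual, and to apply Riesz--Markov — neither of which would be as clean on a non-compact space. I would also replace the stray $\Sigma$ in the statement by $\Sigma_R$, consistent with the surrounding notation.
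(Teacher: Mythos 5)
Your proof is correct: the paper gives no argument of its own for this lemma, simply citing \cite[Section 3.2]{DS89}, and your reduction to the weak-$*$ topology via Riesz--Markov, the initial-topology characterisation for part (i), and the standard ``kernel-intersection'' lemma for surjectivity in part (ii) is exactly the classical argument found there. The small points you flag (using compactness of $\Sigma_{R}$ to get $C_{b}=C$, and reading $\Sigma$ as $\Sigma_{R}$) are the right ones and introduce no gap.
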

Note that $E$ is compact (since $\Sigma_{R}$ is compact). Given
$\mu\in{\cal P}(E)$, its \textit{logarithmic moment generating function}
(\textit{log m.g.f.}) is the functional $\Lambda_{\mu}:X^{*}\rightarrow(-\infty,\infty]$
defined by 
\[
\Lambda_{\mu}(\lambda)\triangleq\log\int_{E}\exp\big(_{X^{*}}\langle\lambda,x\rangle_{X}\mu(dx)\big),\ \ \ \lambda\in X^{*}.
\]
Under the correspondence (\ref{eq:Riesz}), $\Lambda_{\mu}$ can be
viewed as a functional on $C_{b}(\Sigma_{R})$: 
\[
\Lambda_{\mu}(f)=\log\int_{E}\exp\Big(\int_{\Sigma_{R}}f(x)\alpha(dx)\Big)\mu(d\alpha),\ \ \ f\in C_{b}(\Sigma_{R}).
\]

Throughout the rest, let $\varepsilon>0$ be given fixed. Let $\{W_{s}^{t}:s\geqslant0\}$
be a $g^{t}$-Brownian motion on $\Sigma$ ($g^{t}$ is the metric
with curvature $\kappa_{t}\equiv-\alpha(t)^{2}$ defined in (\ref{eq:MetricT})).
Let $\{L_{s}^{t}(dx):s\geqslant0\}$ be the occupation time measure
process defined by (\ref{eq:OTMP}). Consider the family of probability
measures 
\[
\{\mu_{t}^{(\rho,\sigma)}:t\geqslant0,\ (\rho,\sigma)\in\Sigma_{R},\rho\leqslant R-\varepsilon\}\subseteq{\cal P}(E)
\]
given by 
\[
\mu_{t}^{(\rho,\sigma)}(\Gamma)\triangleq\mathbb{E}_{\rho,\sigma}\big[L_{\beta(t)}^{t}\in\Gamma\big|W^{t}([0,\beta(t)])\subseteq\Sigma_{R}\big],\ \ \ \Gamma\in{\cal B}(E).
\]
Here the subscript $(\rho,\sigma)$ means that $W_{0}^{t}=(\rho,\sigma).$ 

As we will see, a (uniform) LDP for the family $\{\mu_{t}^{(\rho,\sigma)}\}$
will follow from the general theory of LDP in topological vector spaces
(the abstract G\"artner-Ellis theorem). To establish such an LDP,
a crucial point is to identify the underlying rate function. Let $\Lambda_{t}^{(\rho,\sigma)}$
be the log m.g.f. of $\mu_{t}^{(\rho,\sigma)}.$ One checks by definition
that 
\[
\Lambda_{t}^{(\rho,\sigma)}(f)=\log\mathbb{E}_{\rho,\sigma}\Big[\exp\big(\frac{1}{\beta(t)}\int_{0}^{\beta(t)}f(W_{s}^{t})ds\big)\big|W^{t}([0,\beta(t)])\subseteq\Sigma_{R}\Big]
\]
for any $f\in C_{b}(\Sigma_{R})$. Given any such $f$, let $\lambda_{0}^{{\rm eu};f,R}$
denote the principal Dirichlet eigenvalue of $-(\Delta_{{\rm eu}}+f)$
on $\Sigma_{R}$. We simply write $\lambda_{0}^{{\rm eu};R}$ for
the case $f=0$.
\begin{lem}
\label{lem:ConvLogMGF}Let $f\in C_{b}(\Sigma_{R})$ be given fixed.
Then one has 
\begin{equation}
\lim_{t\rightarrow\infty}\frac{1}{\beta(t)}\Lambda_{t}^{(\rho,\sigma)}(\beta(t)f)=\lambda_{0}^{{\rm eu};R}-\lambda_{0}^{{\rm eu};f,R}=:\Lambda(f),\label{eq:Lambda}
\end{equation}
where the convergence holds uniformly for $\rho\leqslant R-\varepsilon$.
\end{lem}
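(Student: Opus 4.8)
The plan is to relate the quantity $\Lambda_t^{(\rho,\sigma)}(\beta(t) f)$ directly to the solution of the rescaled parabolic problem \eqref{eq:ResPAMF} (with potential $f$) and then invoke the eigenvalue asymptotics already established in Lemma \ref{lem:VfAsymp}. Writing out the definition of $\Lambda_t^{(\rho,\sigma)}$ at the argument $\beta(t) f$, one has
\[
\Lambda_t^{(\rho,\sigma)}(\beta(t) f)=\log\mathbb{E}_{\rho,\sigma}\Big[\exp\Big(\int_0^{\beta(t)} f(W_s^t)\,ds\Big)\,\Big|\,W^t([0,\beta(t)])\subseteq\Sigma_R\Big].
\]
Multiplying and dividing by the exit probability and using the Feynman–Kac representation \eqref{eq:FKRes} (applied to the equation \eqref{eq:ResPAMF}, i.e. with the deterministic potential $f$ in place of $\xi_t$), the numerator is exactly $v(\beta(t),(\rho,\sigma))$ where $v$ solves \eqref{eq:ResPAMF}, and the denominator is $\mathbb{P}_{\rho,\sigma}\big(W^t([0,\beta(t)])\subseteq\Sigma_R\big)=w(\beta(t),(\rho,\sigma))$ where $w$ solves the same equation with $f$ replaced by $0$. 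Hence
\[
\frac{1}{\beta(t)}\Lambda_t^{(\rho,\sigma)}(\beta(t) f)=\frac{1}{\beta(t)}\log v(\beta(t),(\rho,\sigma))-\frac{1}{\beta(t)}\log w(\beta(t),(\rho,\sigma)).
\]

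Now I apply Lemma \ref{lem:VfAsymp} twice. For the first term, with potential $f$, the lemma gives $\frac{1}{\beta(t)}\log v(\beta(t),(\rho,\sigma))\to-\lambda_0^{\mathrm{eu};f,R}$ uniformly for $\rho\leqslant R-\varepsilon$, where $\lambda_0^{\mathrm{eu};f,R}$ is the principal Dirichlet eigenvalue of $\Delta_{\mathrm{eu}}^+ - f$ on $(\Sigma_R,g^\infty)$. For the second term, with potential $0$, it gives $\frac{1}{\beta(t)}\log w(\beta(t),(\rho,\sigma))\to-\lambda_0^{\mathrm{eu};R}$ uniformly on the same range. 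Subtracting yields
\[
\lim_{t\to\infty}\frac{1}{\beta(t)}\Lambda_t^{(\rho,\sigma)}(\beta(t) f)=\lambda_0^{\mathrm{eu};R}-\lambda_0^{\mathrm{eu};f,R},
\]
uniformly for $\rho\leqslant R-\varepsilon$, which is exactly \eqref{eq:Lambda}. One small bookkeeping point is a sign/normalisation check: Lemma \ref{lem:VfAsymp} is stated with $\lambda_0^{\mathrm{eu}}$ the principal Dirichlet eigenvalue of $\Delta_{\mathrm{eu}}^+ - f$, and the present statement uses $\lambda_0^{\mathrm{eu};f,R}$ for the principal Dirichlet eigenvalue of $-(\Delta_{\mathrm{eu}}+f)$; these coincide since $\Delta_{\mathrm{eu}}^+=-\Delta_{\mathrm{eu}}$, so no discrepancy arises.

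The main obstacle is really just the reduction in the first paragraph: one must be careful that the conditioning does not interfere with the Feynman–Kac representation, i.e. that conditioning on $\{W^t([0,\beta(t)])\subseteq\Sigma_R\}$ and weighting by $\exp(\int_0^{\beta(t)} f(W_s^t)\,ds)$ gives precisely $v(\beta(t),\cdot)/w(\beta(t),\cdot)$ with $v,w$ as above. This is immediate once one writes the conditional expectation as the ratio of the unconditioned quantity $\mathbb{E}_{\rho,\sigma}[\exp(\int_0^{\beta(t)} f(W_s^t)\,ds);W^t([0,\beta(t)])\subseteq\Sigma_R]$ and the exit probability, both of which solve \eqref{eq:ResPAMF}-type equations by Proposition \ref{prop:FK}(ii) (with the obvious rescaling, exactly as in Lemma \ref{lem:ResPAM}). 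Everything else is a direct citation of Lemma \ref{lem:VfAsymp}; the uniformity over $\rho\leqslant R-\varepsilon$ is inherited verbatim from that lemma.
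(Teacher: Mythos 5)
Your proof is correct and follows exactly the paper's approach: rewrite $\exp(\Lambda_t^{(\rho,\sigma)}(\beta(t)f))$ as the ratio $v^{t,f}(\beta(t),\cdot)/v^{t,0}(\beta(t),\cdot)$ via the Feynman--Kac representation, then apply Lemma \ref{lem:VfAsymp} to numerator and denominator separately. The extra bookkeeping you include on the conditioning and sign conventions is accurate but matches what the paper implicitly relies on.
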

\begin{proof}
Let $v^{t,f}(s,(\rho,\sigma))$ be the solution to the initial-boundary
value problem (\ref{eq:ResPAMF}). According to the Feynman-Kac representation
(\ref{eq:FKRes}), one has 
\[
\exp\big(\Lambda_{t}^{(\rho,\sigma)}(\beta(t)f)\big)=\frac{v^{t,f}(\beta(t),(\rho,\sigma))}{v^{t,0}(\beta(t),(\rho,\sigma))}.
\]
The result thus follows from Lemma \ref{lem:VfAsymp}.
\end{proof}
Let $\Lambda^{*}:X\rightarrow[0,\infty]$ be the \textit{Fenchel-Legendre
transform} of $\Lambda$:
\begin{align}
\Lambda^{*}(\alpha) & \triangleq\sup\big\{_{X^{*}}\langle\lambda,\alpha\rangle_{X}-\Lambda(\lambda):\lambda\in X^{*}\big\}\nonumber \\
 & =\sup\Big\{\int_{\Sigma_{R}}fd\alpha+\lambda_{0}^{{\rm eu};f,R}:f\in C_{b}(\Sigma_{R})\Big\}-\lambda_{0}^{{\rm eu};R},\ \ \ \alpha\in X.\label{eq:Lamb*}
\end{align}
This will be the rate function governing the LDP for $\{\mu_{t}^{(\rho,\sigma)}\}$. 
\begin{prop}
\label{prop:LocalLDP}The family $\{\mu_{t}^{(\rho,\sigma)}\}$ satisfies
the following (uniform) LDP with the convex, good rate function $\Lambda^{*}$:
\begin{align}
-\inf_{\mathring{\Gamma}}\Lambda^{*}\leqslant & \underset{t\rightarrow\infty}{\underline{\lim}}\frac{1}{\beta(t)}\log\big(\inf_{\rho\leqslant R-\varepsilon}\mu_{t}^{(\rho,\sigma)}(\Gamma)\big)\nonumber \\
 & \ \ \ \ \ \ \ \ \leqslant\underset{t\rightarrow\infty}{\overline{\lim}}\frac{1}{\beta(t)}\log\big(\sup_{\rho\leqslant R-\varepsilon}\mu_{t}^{(\rho,\sigma)}(\Gamma)\big)\leqslant-\inf_{\bar{\Gamma}}\Lambda^{*}\label{eq:LocalLDP}
\end{align}
for all $\Gamma\in{\cal B}(E)$.
\end{prop}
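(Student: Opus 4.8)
The plan is to obtain the uniform LDP by applying the abstract G\"artner-Ellis theorem to the family $\{\mu_t^{(\rho,\sigma)}\}$ on the locally convex Hausdorff topological vector space $(X,\tau)$ with $X={\cal M}(\Sigma_R)$. By Lemma~\ref{lem:Riesz}(ii) the dual $X^*$ is identified with $C_b(\Sigma_R)$ through $f\mapsto[\alpha\mapsto\int_{\Sigma_R}f\,d\alpha]$, so the scaled logarithmic moment generating function at speed $\beta(t)$ and parameter $f\in X^*$ is exactly $\tfrac{1}{\beta(t)}\Lambda_t^{(\rho,\sigma)}(\beta(t)f)$. Since each $\mu_t^{(\rho,\sigma)}$ is supported on $E={\cal P}(\Sigma_R)$, which is weakly compact because $\Sigma_R$ is a compact metric space, the family is exponentially tight for free; moreover any lower semicontinuous candidate rate function is automatically good, since its sublevel sets are closed subsets of the compact set $E$ (and, as one checks directly by testing against constants and against functions concentrated on a small set, $\Lambda^*\equiv+\infty$ off $E$).

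Next I would verify the hypotheses of the abstract theorem. By Lemma~\ref{lem:ConvLogMGF}, for every $f\in C_b(\Sigma_R)$ the limit
\[
\Lambda(f)=\lim_{t\to\infty}\frac{1}{\beta(t)}\Lambda_t^{(\rho,\sigma)}(\beta(t)f)=\lambda_0^{{\rm eu};R}-\lambda_0^{{\rm eu};f,R}
\]
exists, is finite (a difference of principal Dirichlet eigenvalues), does not depend on the starting point, and the convergence is uniform over $\rho\leqslant R-\varepsilon$; convexity of $\Lambda$ is inherited from H\"older's inequality. The one remaining ingredient, needed for the lower bound, is regularity of $\Lambda$: here I would invoke analytic perturbation theory (Kato-Rellich) for the isolated simple principal eigenvalue $\lambda_0^{{\rm eu};f,R}$ of $-(\Delta_{{\rm eu}}+f)$ on $\Sigma_R$, which shows that $f\mapsto\lambda_0^{{\rm eu};f,R}$ is G\^ateaux (indeed real-analytically) differentiable on $C_b(\Sigma_R)$ with derivative $g\mapsto-\int_{\Sigma_R}g\,\psi_f^2\,dx$, where $\psi_f$ is the $L^2$-normalised ground state. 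Hence $\Lambda$ is G\^ateaux differentiable everywhere, which is precisely the condition under which the abstract G\"artner-Ellis theorem upgrades the automatic (Chebyshev) upper bound to a full LDP.

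The abstract theorem then gives that $\{\mu_t^{(\rho,\sigma)}\}$ satisfies the LDP with convex, good rate function equal to the Fenchel-Legendre transform $\Lambda^*$, which by the computation recorded in (\ref{eq:Lamb*}) is $\Lambda^*(\alpha)=\sup_{f\in C_b(\Sigma_R)}\{\int_{\Sigma_R}f\,d\alpha+\lambda_0^{{\rm eu};f,R}\}-\lambda_0^{{\rm eu};R}$. To promote this to the \emph{uniform} statement (\ref{eq:LocalLDP}), I would re-run the two halves of the argument while tracking the initial point: for the upper bound over closed $\Gamma$ one applies the Chebyshev estimate with $\sup_{\rho\leqslant R-\varepsilon}\mu_t^{(\rho,\sigma)}(\Gamma)$ in place of a fixed $\mu_t^{(\rho,\sigma)}(\Gamma)$, using that the error term in Lemma~\ref{lem:ConvLogMGF} is uniform in $\rho$; for the lower bound over open $\Gamma$ one runs the exposed-point / change-of-measure argument with $\inf_{\rho\leqslant R-\varepsilon}\mu_t^{(\rho,\sigma)}(\Gamma)$, again with all controlling quantities uniform in $\rho$ by Lemma~\ref{lem:ConvLogMGF}. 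Combined with the trivial exponential tightness, this yields the chain of inequalities (\ref{eq:LocalLDP}).

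The step I expect to be the main obstacle is the lower bound: convergence of the scaled log m.g.f.'s alone never suffices in infinite dimensions, so one must genuinely verify the differentiability/exposed-hyperplane hypothesis of the abstract G\"artner-Ellis theorem — i.e.\ that $f\mapsto\lambda_0^{{\rm eu};f,R}$ is differentiable in the relevant topology on $C_b(\Sigma_R)$ and that every point of $E$ is exposed by an affine functional in the effective domain of $\Lambda$, so that passing through $\Lambda^{**}=\Lambda$ costs nothing. A secondary but real technical point is ensuring that the uniformity over $\rho\leqslant R-\varepsilon$ survives every estimate; this traces back to the uniformity of the heat-kernel and ground-state bounds feeding Lemma~\ref{lem:VfAsymp}, which is exactly why the cutoff at $R-\varepsilon$ (rather than at $R$) is imposed.
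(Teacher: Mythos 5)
Your proposal follows essentially the same route as the paper: exponential tightness is immediate from compactness of $E={\cal P}(\Sigma_R)$, the scaled log m.g.f. converges to $\Lambda(f)=\lambda_0^{{\rm eu};R}-\lambda_0^{{\rm eu};f,R}$ by Lemma~\ref{lem:ConvLogMGF}, G\^ateaux differentiability of $\Lambda$ is verified by first-order perturbation of the simple principal Dirichlet eigenvalue, and the abstract G\"artner-Ellis theorem then delivers the LDP with rate function $\Lambda^*$. The one place where your plan diverges tactically from the paper is in upgrading to the \emph{uniform} bounds (\ref{eq:LocalLDP}): you propose to re-run both halves of the G\"artner-Ellis proof while carrying the $\sup$/$\inf$ over $\rho\leqslant R-\varepsilon$ through each estimate, which works but requires opening the black box. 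The paper instead uses a shorter selection trick: for the fixed $\Gamma$, choose at every $t$ a maximising starting point $(\rho^t,\sigma^t)$ and a minimising one $(\rho_t,\sigma_t)$; the uniform convergence (\ref{eq:Lambda}) guarantees that the log m.g.f.\ convergence (to the same $\Lambda$) holds along these $t$-dependent sequences, so the abstract theorem applies verbatim to the two auxiliary families $\{\mu_t^{(\rho^t,\sigma^t)}\}$ and $\{\mu_t^{(\rho_t,\sigma_t)}\}$ and directly yields the two-sided bound without re-proving anything. Both routes are valid; the paper's is the more economical of the two.
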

We are going to prove Proposition \ref{prop:LocalLDP} by using the
abstract \textit{G\"artner-Ellis theorem} in topological vector spaces. It
asserts that under suitable conditions, Varadhan's asymptotics for
linear functionals implies an LDP. The precise result is recalled
as follows (cf. \cite[Corollary 4.6.14]{DZ09}). A function $f:X^{*}\rightarrow\mathbb{R}$
is said to be \textit{G\^ateaux differentiable} if for every $\alpha,\beta\in X,$
the function $t\mapsto f(\alpha+t\beta)$ is differentiable at $t=0.$

\begin{abGEthm} Let $\{\mu_{\eta}\}$ be
an exponentially tight family of Borel probability measures on a locally
convex, Hausdorff topological vector space $X.$ Suppose that 
\begin{equation}
\Lambda(\cdot)\triangleq\lim_{\eta\rightarrow0}\eta\Lambda_{\mu_{\eta}}(\cdot/\eta):X^{*}\rightarrow\mathbb{R}\label{eq:ConvLogMGF}
\end{equation}
exists finitely and is G\^ateaux differentiable. Then $\{\mu_{\eta}\}$
satisfies the LDP with the convex, good rate function $\Lambda^{*}$
being the Fenchel-Legendre transform of $\Lambda$.
\end{abGEthm}

\begin{proof}[Proof of Proposition \ref{prop:LocalLDP}]
We first prove the G\^ateaux differentiability of $\Lambda(\cdot)$ defined by (\ref{eq:Lambda}).
\begin{comment}
One first notes that Proposition \ref{prop:LDPTool} is easily adapted
to a parametrised version \textcolor{blue}{(confirm this)}. To be
more specific, assume that $\mu_{\varepsilon}^{(\alpha)}$ depends
additionally on a parameter $\alpha\in{\cal I}$. If the family $\{\mu_{\varepsilon}^{(\alpha)}\}$
is uniformly (with respect to $\alpha$) exponentially tight and the
convergence (\ref{eq:ConvLogMGF}) holds uniformly in $\alpha$ with
limiting log m.g.f. $\Lambda$ independent of $\alpha$, then $\{\mu_{\varepsilon}^{(\alpha)}\}$
satisfies a uniform LDP with rate function $\Lambda^{*}$ (uniformity
is understood in terms of the LDP upper and lower bounds; cf. (\ref{eq:LocalLDP})).
The point is that in this case, one has 
\[
\sup_{\alpha\in{\cal I}}\varepsilon\Lambda_{\mu_{\varepsilon}^{(\alpha)}}(f/\varepsilon)\rightarrow\Lambda(f),\ \inf_{\alpha\in{\cal I}}\varepsilon\Lambda_{\mu_{\varepsilon}^{(\alpha)}}(f/\varepsilon)\rightarrow\Lambda(f)\ \ \ \text{as }\varepsilon\rightarrow0
\]
for every $f\in X^{*}.$ As a consequence, all estimates appearing
in the proof of Proposition \ref{prop:LDPTool} developed in \cite{DZ09}
(for both upper and lower bounds) hold uniformly with respect to $\alpha\in{\cal I}.$
This yields a corresponding uniform LDP.
\end{comment}
Let $f,g\in C_{b}(\Sigma_{R})$
and $\delta>0$ be given. To ease notation, we simply write
\[
\lambda_{\delta}\triangleq\lambda_{0}^{{\rm eu};f+\delta g,R},\ \lambda\triangleq\lambda_{0}^{{\rm eu};f,R}.
\]
Define 
\[
{\cal L}^{\delta}\triangleq\Delta_{{\rm eu}}^{+}-(f+\delta g),\ {\cal L}\triangleq\Delta_{{\rm eu}}^{+}-f.
\]
Let $u_{\delta}$ (respectively, $u$) be the normalised principal
eigenfunction of ${\cal L}^{\delta}$ (respectively, ${\cal L}$)
on $\Sigma_{R}$ with Dirichlet boundary condition. In particular,
one has 
\[
{\cal L}^{\delta}u_{\delta}=\lambda_{\delta}u_{\delta},\ {\cal L}u=\lambda u.
\]
Simply algebra shows that 
\[
(\lambda_{\delta}-\lambda)u_{\delta}=-\delta gu_{\delta}+{\cal L}(u_{\delta}-u)-\lambda(u_{\delta}-u).
\]
Since ${\cal L}$ is self-adjoint, one has 
\[
\langle{\cal L}(u_{\delta}-u),u\rangle=\langle u_{\delta}-u,{\cal L}u\rangle=\lambda\langle u_{\delta}-u,u\rangle,
\]
where $\langle\cdot,\cdot\rangle$ is the $L^{2}$-inner product with
respect to the Lebesgue measure. It follows that 
\[
(\lambda_{\delta}-\lambda)\langle u_{\delta},u\rangle=-\delta\langle gu_{\delta},u\rangle.
\]
\textcolor{black}{From standard perturbation results (cf. \cite{RS78}), one knows that
$u_{\delta}\rightarrow u$ in $L^{2}(\Sigma_{R})$ as $\delta\rightarrow0$. }As a result, the function $\delta\mapsto\lambda_{\delta}$
is differentiable at $\delta=0$ and one has 
\[
\frac{d}{d\delta}\big|_{\delta=0}\lambda_{\delta}=-\langle gu,u\rangle=-\int_{\Sigma}g(x)u(x)^{2}dx.
\]
This proves the G\^ateaux differentiability of the limiting functional
$\Lambda.$

To prove the LDP estimate (\ref{eq:LocalLDP}), let $\Gamma \in \bB(E)$ be given fixed. For every $t>0$, there exist $(\rho^t, \sigma^t)$ and $(\rho_t, \sigma_t) \in \Sigma_{R-\eps}$ such that
\begin{equation*}
 \mu_t^{(\rho_t, \sigma_t)}(\Gamma) = \inf_{\rho \leqslant R-\eps} \mu_t^{(\rho,\sigma)}(\Gamma)\;, \qquad    \mu_t^{(\rho^t, \sigma^t)}(\Gamma) = \sup_{\rho \leqslant R-\eps} \mu_t^{(\rho,\sigma)}(\Gamma)\;.
\end{equation*}
The uniform convergence \eqref{eq:Lambda} ensures that the convergence still holds (with the same limit $\Lambda$ when $(\rho,\sigma)$ is replaced by the sequences $(\rho_t, \sigma_t)$ and $(\rho^t, \sigma^t)$). Note that the compactness of $E$ trivially implies the uniform exponential
tightness of the family $\{\mu_{t}^{(\rho,\sigma)}\}$. Together with the G\^ateaux differentiability of $\Lambda$ and the abstract G\"artner-Ellis theorem, one deduces that both $\mu_t^{(\rho_t,\sigma_t)}$ and $\mu_t^{(\rho^t,\sigma^t)}$ satisfy the LDP with good rate function $\Lambda^*$. By the specific choices of $(\rho_t,\sigma_t)$ and $(\rho^t,\sigma^t)$, this in particular implies that \eqref{eq:LocalLDP} holds for $\Gamma \in \bB(E)$. The proof is then complete since $\Gamma \in \bB(E)$ is arbitrary.

\end{proof}

\subsection{Varadhan's asymptotics: lower bound}

An important consequence of the LDP (\ref{eq:LocalLDP}) is the following
Varadhan-type asymptotics, which is critical for the proof of Theorem
\ref{thm:LowerBd}. Here we state the lower asymptotics that is relevant
to the theorem (the upper version will be discussed in Lemma \ref{lem:VaradUp}
below where we prove the upper asymptotics). 

Recall that $\xi_{t}$ is the rescaled Gaussian field defined by (\ref{eq:RescaledXi})
and $J_{t}$ is the functional given by (\ref{eq:JtDef}). Topologically,
they are defined on $\Sigma$ and ${\cal P}_{c}(\Sigma)$ respectively,
where $\Sigma\cong T_{o}M$ is topologically identified with $M$.
However, one should keep in mind that they are defined under the geometry
of curvature $\kappa_{t}\equiv-\alpha(t)^{2}$, i.e. on the Riemannian
manifold $(\Sigma,g^{t}).$ Also recall that $W^{t}$ is a Brownian
motion in $(\Sigma,g^{t})$.
\begin{lem}
\label{lem:VaradLow}Let $R,\varepsilon>0$ be given fixed. Then the
following lower estimate holds true:
\begin{align*}
\underset{t\rightarrow\infty}{\underline{\lim}}\frac{1}{\beta(t)}\inf_{\rho\leqslant R-\varepsilon}\log & \mathbb{E}_{\rho,\sigma}\big[e^{-\beta(t)J_{t}(L_{\beta(t)}^{t})};W^{t}([0,\beta(t)])\subseteq\Sigma_{R}\big]\geqslant-\chi_{R},
\end{align*}
where $J$ is the functional defined by Lemma \ref{lem:JtConv} and
the exponent $\chi_{R}$ is defined by (\ref{eq:ChiR}).
\end{lem}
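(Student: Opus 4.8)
The plan is to read the expectation as a Varadhan-type exponential integral against the conditional occupation-measure laws $\mu_t^{(\rho,\sigma)}$ and feed in the uniform LDP of Proposition \ref{prop:LocalLDP}. Writing $E=\mathcal{P}(\Sigma_R)$ and $p_t(\rho,\sigma)\triangleq\PP_{\rho,\sigma}\big(W^t([0,\beta(t)])\subseteq\Sigma_R\big)$, the definition of $\mu_t^{(\rho,\sigma)}$ gives the factorisation
\begin{equation*}
\EE_{\rho,\sigma}\big[e^{-\beta(t)J_{t}(L_{\beta(t)}^{t})};W^{t}([0,\beta(t)])\subseteq\Sigma_{R}\big]=p_t(\rho,\sigma)\int_E e^{-\beta(t)J_t(\alpha)}\,\mu_t^{(\rho,\sigma)}(d\alpha)\;.
\end{equation*}
Taking logarithms, dividing by $\beta(t)$, passing to $\inf_{\rho\le R-\varepsilon}$ and using $\inf(a+b)\ge\inf a+\inf b$, I would bound the two factors separately, uniformly in the starting point.

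For the survival factor, Feynman--Kac (cf. \eqref{eq:FKRes}) identifies $p_t(\rho,\sigma)$ with the solution $v^{t,0}(\beta(t),(\rho,\sigma))$ of \eqref{eq:ResPAMF} with $f\equiv0$, so Lemma \ref{lem:VfAsymp} gives $\tfrac{1}{\beta(t)}\log p_t(\rho,\sigma)\to-\lambda_0^{{\rm eu};R}$ uniformly for $\rho\le R-\varepsilon$. For the second factor I would first replace $J_t$ by its limit: since $\Sigma_R$ is compact, Lemma \ref{lem:JtConv} gives $\delta_t\triangleq\sup_{\mu\in E}|J_t(\mu)-J(\mu)|\to0$, so $\int_E e^{-\beta(t)J_t}\,d\mu_t^{(\rho,\sigma)}\ge e^{-\beta(t)\delta_t}\int_E e^{-\beta(t)J}\,d\mu_t^{(\rho,\sigma)}$. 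The functional $J$ is bounded and weakly continuous on $E$ (the map $\mu\mapsto\mu\otimes\mu$ is weakly continuous and $d_{\rm eu}^2$ is bounded continuous on $\Sigma_R\times\Sigma_R$), so only the elementary lower half of Varadhan's lemma is needed and no tail/integrability condition arises. Concretely, for fixed $\mu_0\in E$ and any open neighbourhood $N\ni\mu_0$ one has $\inf_{\rho\le R-\varepsilon}\int_E e^{-\beta(t)J}\,d\mu_t^{(\rho,\sigma)}\ge e^{-\beta(t)\sup_N J}\inf_{\rho\le R-\varepsilon}\mu_t^{(\rho,\sigma)}(N)$, and the lower bound in \eqref{eq:LocalLDP} gives $\underset{t\to\infty}{\underline{\lim}}\,\tfrac{1}{\beta(t)}\log\inf_{\rho\le R-\varepsilon}\mu_t^{(\rho,\sigma)}(N)\ge-\Lambda^*(\mu_0)$; letting $N\downarrow\{\mu_0\}$, using continuity of $J$, and optimising over $\mu_0$ I would obtain
\begin{equation*}
\underset{t\to\infty}{\underline{\lim}}\,\frac{1}{\beta(t)}\log\inf_{\rho\le R-\varepsilon}\int_E e^{-\beta(t)J}\,d\mu_t^{(\rho,\sigma)}\ \ge\ -\inf_{\mu\in E}\big\{J(\mu)+\Lambda^*(\mu)\big\}\;.
\end{equation*}

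Assembling the two bounds and letting $\delta_t\to0$ yields
\begin{equation*}
\underset{t\rightarrow\infty}{\underline{\lim}}\,\frac{1}{\beta(t)}\inf_{\rho\leqslant R-\varepsilon}\log\EE_{\rho,\sigma}\big[e^{-\beta(t)J_{t}(L_{\beta(t)}^{t})};W^{t}([0,\beta(t)])\subseteq\Sigma_{R}\big]\ \ge\ -\lambda_0^{{\rm eu};R}-\inf_{\mu\in E}\big\{J(\mu)+\Lambda^*(\mu)\big\}\;,
\end{equation*}
and substituting the explicit Fenchel--Legendre transform $\Lambda^*(\mu)=\sup_{f\in C_b(\Sigma_R)}\{\int_{\Sigma_R}f\,d\mu+\lambda_0^{{\rm eu};f,R}\}-\lambda_0^{{\rm eu};R}$ from \eqref{eq:Lamb*} makes the two copies of $\lambda_0^{{\rm eu};R}$ cancel, so the right-hand side is exactly $-\chi_R$ from \eqref{eq:ChiR}. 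I expect the main obstacle to be purely organisational: keeping every convergence uniform over the initial radius $\rho\le R-\varepsilon$ and the infima over $\rho$ on the correct side of each inequality. That uniformity is already packaged into Lemma \ref{lem:VfAsymp} and the uniform LDP of Proposition \ref{prop:LocalLDP}, so the real content has been done upstream; what remains here is bookkeeping together with the forced algebraic cancellation that produces $-\chi_R$.
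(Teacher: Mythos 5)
Your argument is correct and follows essentially the same route as the paper's proof: split off the survival probability, show it contributes $-\lambda_0^{{\rm eu};R}$ via Lemma~\ref{lem:VfAsymp}, and apply the lower bound of Varadhan's lemma together with the uniform LDP of Proposition~\ref{prop:LocalLDP} and the uniform convergence $J_t\to J$ from Lemma~\ref{lem:JtConv} to the conditional integral, then cancel $\lambda_0^{{\rm eu};R}$ against the constant in $\Lambda^*$. The only cosmetic difference is that you trade $J_t$ for $J$ upfront via the uniform error $\delta_t$, whereas the paper keeps $J_t$ inside the exponential and uses $\sup_{B(\mu,r)}J_t\to\sup_{B(\mu,r)}J$; both rely on the same continuity of $J$ on $\mathcal{P}(\Sigma_R)$ when shrinking the neighbourhood.
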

\begin{proof}
Recall that $-\lambda_{0}^{{\rm eu};f,R}$ (respectively, $-\lambda_{0}^{{\rm eu};R}$)
is the principal Dirichlet eigenvalue of $\Delta_{{\rm eu}}^{+}-f$
on $\Sigma_{R}$ (respectively, $\Delta_{{\rm eu}}^{+}$). One can
write 
\begin{align}
 & \frac{1}{\beta(t)}\log\mathbb{E}_{\rho,\sigma}\big[e^{-\beta(t)J_{t}(L_{\beta(t)}^{t})};W^{t}([0,\beta(t)])\subseteq\Sigma_{R}\big]\nonumber \\
 & =\frac{1}{\beta(t)}\log\mathbb{E}_{\rho,\sigma}\big[e^{-\beta(t)J_{t}(L_{\beta(t)}^{t})}\big|W^{t}([0,\beta(t)])\subseteq\Sigma_{R}\big]\nonumber \\
 & \ \ \ +\frac{1}{\beta(t)}\log\mathbb{P}_{\rho,\sigma}\big(W^{t}([0,\beta(t)])\subseteq\Sigma_{R}\big).\label{eq:LowerVarPf}
\end{align}
To study the asymptotics of the first term, given any $\mu\in{\cal P}(\Sigma_{R})$
and $r>0$ one has 
\begin{align*}
 & \mathbb{E}_{\rho,\sigma}\big[e^{-\beta(t)J_{t}(L_{\beta(t)}^{t})}\big|W^{t}([0,\beta(t)])\subseteq\Sigma_{R}\big]\\
 & =\int_{{\cal P}(\Sigma_{R})}e^{-\beta(t)J_{t}(\alpha)}d\mu_{t}^{(\rho,\sigma)}(\alpha)\\
 & \geqslant\exp\big(-\beta(t)\sup_{\alpha\in B(\mu,r)}J_{t}(\alpha)\big)\cdot\mu_{t}^{(\rho,\sigma)}\big(B(\mu,r)\big),
\end{align*}
where $B(\mu,r)\triangleq\{\nu\in{\cal P}(\Sigma_{R}):\rho(\nu,\mu)<r\}$
and $\rho$ is some metric compatible with the weak topology. It then
follows from the lower bound in (\ref{eq:LocalLDP}) that 
\begin{equation}
\underset{t\rightarrow\infty}{\underline{\lim}}\frac{1}{\beta(t)}\log\inf_{\rho\leqslant R-\varepsilon}{\rm L.H.S.}\geqslant-\sup_{B(\mu,r)}J-\inf_{B(\mu,r)}\Lambda^{*}.\label{eq:PfVAsym}
\end{equation}
Here we also used the fact that 
\[
\sup_{B(\mu,r)}J_{t}\rightarrow\sup_{B(\mu,r)}J\ \ \ \text{as }t\rightarrow\infty,
\]
which is a consequence of the uniform convergence of $J_{t}$ proved
in Lemma \ref{lem:JtConv}. By taking $r\downarrow0$ in (\ref{eq:PfVAsym}),
one obtains that 
\begin{align*}
 & \underset{t\rightarrow\infty}{\underline{\lim}}\frac{1}{\beta(t)}\log\inf_{\rho\leqslant R-\varepsilon}\mathbb{E}_{\rho,\sigma}\big[e^{-\beta(t)J_{t}(L_{\beta(t)}^{t})}\big|W^{t}([0,\beta(t)])\subseteq\Sigma_{R}\big]\\
 & \geqslant-\big(J(\mu)+\Lambda^{*}(\mu)\big)=-\Big(J(\mu)+\sup_{f\in C_{b}(\Sigma_{R})}\Big\{\int_{\Sigma_{R}}fd\mu+\lambda_{0}^{{\rm eu};f,R}\Big\}-\lambda_{0}^{{\rm eu};R}\Big).
\end{align*}
On the other hand, the second term in (\ref{eq:LowerVarPf}) converges
to $-\lambda_{0}^{{\rm eu};R}$ uniformly for $(\rho,\sigma)\in\Sigma_{R-\varepsilon}$
due to Lemma \ref{lem:ConvLogMGF}. The result thus follows after
cancelling out $\lambda_{0}^{{\rm eu};R}$ and taking infimum over
$\mu\in{\cal P}(\Sigma_{R})$.
\end{proof}

\subsection{Proof of Theorem \ref{thm:LowerBd}}

Gathering the previous ingredients together, we are now in a position
to prove Theorem \ref{thm:LowerBd}.

Let $R>0$ be given fixed. First of all, due to the stationarity of
$\xi$ one has 
\begin{align}
\langle u(t,o)\rangle & =\frac{1}{|Q_{R\alpha(t)}|}\int_{Q_{R\alpha(t)}}\langle u(t,x)\rangle dx\geqslant\frac{1}{|Q_{R\alpha(t)}|}\int_{Q_{R\alpha(t)}}\langle u_{R\alpha(t)}(t,x)\rangle dx\nonumber \\
 & =\frac{(d-1)2^{d-1}}{\omega_{d-1}e^{(d-1)R\alpha(t)}}\int_{0}^{R\alpha(t)}\int_{\mathbb{S}^{d-1}}\langle u_{R\alpha(t)}(t,(r,\sigma))\rangle\sinh^{d-1}rdrd\sigma,\label{eq:LowerBdA}
\end{align}
where $u_{R\alpha(t)}(\cdot,\cdot)$ is the solution to the localised
PAM (\ref{eq:LocalPAM}) on $Q_{R\alpha(t)}$ and we used the obvious
bound that 
\begin{align*}
|Q_{R\alpha(t)}| & =\omega_{d-1}\int_{0}^{R\alpha(t)}\sinh^{d-1}rdr\\
 & \leqslant\omega_{d-1}\int_{0}^{R\alpha(t)}\big(\frac{e^{r}}{2}\big)^{d-1}dr\leqslant\frac{\omega_{d-1}}{(d-1)2^{d-1}}e^{(d-1)R\alpha(t)}.
\end{align*}
Respectively, recall that the rescaled function 
\[
v^{t}(\beta(t),(\rho,\sigma))=e^{-H(t)}u_{R\alpha(t)}(t,(\alpha(t)\rho,\sigma)),\ \ \ (\rho,\sigma)\in\Sigma_{R}
\]
is the solution to the rescaled PAM (\ref{eq:ResPAM}). By applying
a change of variables $r=\alpha(t)\rho$ to (\ref{eq:LowerBdA}) and
using the Feynman-Kac representation (\ref{eq:FKRes}), one finds
that 
\begin{align}
 & e^{-H(t)}\langle u(t,o)\rangle\nonumber \\
 & \geqslant\frac{(d-1)2^{d-1}\alpha(t)}{\omega_{d-1}e^{(d-1)R\alpha(t)}}\int_{0}^{R}\int_{\mathbb{S}^{d-1}}\mathbb{E}_{\rho,\sigma}\big[\langle e^{\int_{0}^{\beta(t)}\xi_{t}(W_{s}^{t})ds}\rangle;W^{t}([0,\beta(t)])\subseteq\Sigma_{R}\big]\nonumber \\
 & \ \ \ \times\sinh^{d-1}(\alpha(t)\rho)d\rho d\sigma\nonumber \\
 & =\frac{(d-1)2^{d-1}\alpha(t)}{\omega_{d-1}e^{(d-1)R\alpha(t)}}\int_{0}^{R}\int_{\mathbb{S}^{d-1}}\mathbb{E}_{\rho,\sigma}\big[e^{-\beta(t)J_{t}(L_{\beta(t)}^{t})};W^{t}([0,\beta(t)])\subseteq\Sigma_{R}\big]\nonumber \\
 & \ \ \ \times\sinh^{d-1}(\alpha(t)\rho)d\rho d\sigma.\label{eq:LowerBdB}
\end{align}

Now let $\varepsilon,\eta>0$ be given. According to Lemma \ref{lem:VaradLow},
one has
\[
\inf_{\rho\leqslant R-\varepsilon}\mathbb{E}_{\rho,\sigma}\big[e^{-\beta(t)J_{t}(L_{\beta(t)}^{t})};W^{t}([0,\beta(t)])\subseteq\Sigma_{R}\big]\geqslant e^{\beta(t)(-\chi_{R}-\eta)}
\]
for all sufficiently large $t$, where $\chi_{R}$ is the exponent
defined by (\ref{eq:ChiR}). It follows from (\ref{eq:LowerBdB})
that
\begin{align*}
e^{-H(t)}\langle u(t,o)\rangle & \geqslant\frac{(d-1)2^{d-1}\alpha(t)}{\omega_{d-1}e^{(d-1)R\alpha(t)}}e^{\beta(t)(-\chi_{R}-\eta)}\int_{0}^{R-\varepsilon}\int_{\mathbb{S}^{d-1}}\sinh^{d-1}(\alpha(t)\rho)d\rho d\sigma\\
 & =\frac{(d-1)2^{d-1}\alpha(t)}{e^{(d-1)R\alpha(t)}}e^{\beta(t)(-\chi_{R}-\eta)}\int_{0}^{R-\varepsilon}\sinh^{d-1}(\alpha(t)\rho)d\rho.
\end{align*}
On the other hand, it is elementary to see that 
\[
\frac{(d-1)2^{d-1}\alpha(t)}{e^{(d-1)R\alpha(t)}}\int_{0}^{R-\varepsilon}\sinh^{d-1}(\alpha(t)\rho)d\rho\sim C_{R,\varepsilon,d}\alpha(t)^{d}\ \ \ \text{as }t\rightarrow\infty
\]
with some positive constant $C_{R,\varepsilon,d}.$ Therefore, one
arrives at 
\begin{equation}
\underset{t\rightarrow\infty}{\underline{\lim}}\frac{1}{\beta(t)}\log\big(e^{-H(t)}\langle u(t,o)\rangle\big)\geqslant-\chi_{R}-\eta.\label{eq:LowerBdC}
\end{equation}
The desired estimate (\ref{eq:LowerBd}) follows by taking $\eta\downarrow0$
and $R\uparrow\infty$.

\section{\label{sec:Upper}The upper $L^{1}$ asymptotics}

Our second step is to establish the following upper asymptotics.
\begin{thm}
\label{thm:UpperBd}One has 
\begin{equation}
\underset{t\rightarrow\infty}{\overline{\lim}}\frac{1}{\beta(t)}\log\big(e^{-H(t)}\langle u(t,0)\rangle\big)\leqslant\underset{R\rightarrow\infty}{\underline{\lim}}(-\chi_{R}),\label{eq:UpperBd}
\end{equation}
where $\chi_{R}$ is defined by (\ref{eq:ChiR}).
\end{thm}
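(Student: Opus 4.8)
The plan is to run the Gärtner--König localisation scheme in the hyperbolic setting: shrink $u(t,o)$ to the localised solution on a geodesic ball of slowly growing radius, tile that ball by balls of \emph{fixed} radius $R$ on which the LDP of Proposition~\ref{prop:LocalLDP} and the Varadhan upper asymptotics (Lemma~\ref{lem:VaradUp}) give the exponent $-\chi_R$, and finally send $R\to\infty$ so that $-\chi_R\uparrow\liminf_{R}(-\chi_R)$.

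\textbf{Step 1: localisation onto a growing ball.} Fix a radius function $R(t)$ with $t^{3/2}\ll R(t)\ll t^{7/4}$ (for instance $R(t)=t^{8/5}$). Since the annealed exponential weight in the Feynman--Kac representation is bounded by $e^{H(t)}$, one has $\langle u(t,o)-u_{R(t)\alpha(t)}(t,o)\rangle\le e^{H(t)}\,\PP_o\big(\sup_{s\le t}d(W_s,o)>R(t)\alpha(t)\big)$, and because the radial part of hyperbolic Brownian motion has drift $(d-1)\coth\rho$, the exit probability is at most $\exp(-cR(t)^2\alpha(t)^2/t)=\exp(-cR(t)^2 t^{-3/2})$; the condition $R(t)\gg t^{3/2}$ makes this super-exponentially small compared with $e^{-H(t)}\langle u(t,o)\rangle$, which is bounded below by $e^{-O(\beta(t))}$ by Theorem~\ref{thm:LowerBd}. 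Rescaling via Lemma~\ref{lem:ResPAM} and averaging over $\xi$ (using $J_t(\mu)=-\tfrac1{\beta(t)}\log\langle e^{\beta(t)(\mu,\xi_t)}\rangle$, exactly as in the lower bound) thus reduces \eqref{eq:UpperBd} to an upper bound for $\langle v^t(\beta(t),o)\rangle=\EE_o\big[e^{-\beta(t)J_t(L^t_{\beta(t)})};\,W^t([0,\beta(t)])\subseteq\Sigma_{R(t)}\big]$, where $(\Sigma_{R(t)},g^t)$ carries the curvature $\kappa_t\equiv-\alpha(t)^2$.

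\textbf{Step 2: eigenvalue decomposition of the growing ball.} Cover $\Sigma_{R(t)}$ by geodesic balls $B_1,\dots,B_{N(t,R)}$ of fixed radius $R$ with bounded overlap (Section~\ref{subsec:HypPart}) and choose a subordinate partition of unity $\{\chi_i\}$ with $\big\|\sum_i|\nabla^t\chi_i|^2_{g^t}\big\|_\infty\le\eps_R=O(R^{-2})\to0$ (Lemma~\ref{lem:POU}(ii)). The IMS localisation formula gives, for the principal Dirichlet eigenvalue $\lambda_0^t$ of $-(\mathcal L^t+\xi_t)$ on $\Sigma_{R(t)}$, the bound $\lambda_0^t\ge\min_i\lambda_0^{t,i}-\eps_R$, where $\lambda_0^{t,i}$ is the corresponding eigenvalue on $B_i$; combined with standard spectral/heat-kernel comparisons this yields $v^t(\beta(t),o)\le C_{R(t)}\,e^{\beta(t)\eps_R}\sum_i e^{-\beta(t)\lambda_0^{t,i}}$ with $\log C_{R(t)}=O(\log t)=o(\beta(t))$. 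Taking $\langle\cdot\rangle$, then invoking the stationarity of $\xi$ (via Lemma~\ref{lem:IsoInv}) together with domain monotonicity for the truncated boundary balls, one obtains $\langle v^t(\beta(t),o)\rangle\le C_{R(t)}\,e^{\beta(t)\eps_R}\,N(t,R)\,\langle e^{-\beta(t)\lambda_0^{t,\Sigma_R}}\rangle$. The decisive arithmetic (Section~\ref{subsec:DecomEigen}) is that, even though $\Sigma_{R(t)}$ has exponentially large volume, $\log N(t,R)=(d-1)\alpha(t)R(t)+\tfrac d4\log t+O_R(1)$, so $\tfrac1{\beta(t)}\log\big(C_{R(t)}N(t,R)\big)\to0$ precisely because $R(t)\ll t^{7/4}$, i.e. $\alpha(t)R(t)=o(\beta(t))$.

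\textbf{Step 3: Varadhan on a fixed ball, conclusion, and the main difficulty.} On the fixed ball $\Sigma_R$ I would relate $\langle e^{-\beta(t)\lambda_0^{t,\Sigma_R}}\rangle$ to the annealed solution $\langle v^{t,\Sigma_R}\rangle$ by standard heat-kernel estimates (whose prefactors contribute only $e^{o(\beta(t))}$) and apply the Varadhan upper asymptotics, Lemma~\ref{lem:VaradUp}: via Proposition~\ref{prop:LocalLDP}, the uniform convergence $J_t\to J$ of Lemma~\ref{lem:JtConv}, and the trivial bound $e^{-\beta(t)J_t}\le1$ that makes the upper half of Varadhan's lemma available, this gives $\limsup_t\tfrac1{\beta(t)}\log\langle e^{-\beta(t)\lambda_0^{t,\Sigma_R}}\rangle\le-\inf_{\mu}\{J(\mu)+\Lambda^*(\mu)\}=-\chi_R$. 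Combining Steps~1--3, for every fixed $R$ one gets $\limsup_{t\to\infty}\tfrac1{\beta(t)}\log\big(e^{-H(t)}\langle u(t,o)\rangle\big)\le-\chi_R+\eps_R$; since $\eps_R\to0$ and $\chi_R\downarrow\lim_R\chi_R=:\chi$ (note $\chi_R$ is non-increasing in $R$, so $\liminf_R(-\chi_R)=-\chi$), letting $R\to\infty$ yields \eqref{eq:UpperBd}. The hard part is the balancing hidden in Steps~1--2: $R(t)$ must grow fast enough ($\gg t^{3/2}$) for the Brownian excursion outside $Q_{R(t)\alpha(t)}$ to be annealed-negligible, yet slowly enough ($\ll t^{7/4}$) that the exponentially many fixed-radius balls tiling $\Sigma_{R(t)}$, together with $\eps_R$ and $C_{R(t)}$, cost only $e^{o(\beta(t))}$; constructing the required bounded-overlap covering and partition of unity without a lattice structure, and checking that this window is nonempty, is the genuinely new ingredient (Sections~\ref{subsec:HypPart}--\ref{subsec:DecomEigen}), where the exponential volume growth of hyperbolic space could have been fatal but is tamed by the curvature dilation $\alpha(t)\to0$.
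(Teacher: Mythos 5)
Your proposal follows the paper's strategy essentially step for step: localise onto a $t$-dependent geodesic ball with a scaling window (your rescaled $t^{3/2}\ll R(t)\ll t^{7/4}$ is the paper's unscaled $t^{5/4}\ll R\alpha(t)\ll\beta(t)=t^{3/2}$), tile by fixed-radius balls with a bounded-overlap partition of unity, apply the IMS/eigenvalue-decomposition bound (Lemma~\ref{lem:DEigenEst}), invoke the fixed-domain Varadhan upper asymptotics, and then let the fixed radius tend to infinity to remove the $O(r^{-2})$ sacrifice of potential. The counting arithmetic $\log N(t,R)\sim (d-1)\alpha(t)R(t)=o(\beta(t))$ is also exactly the paper's point in Section~\ref{subsec:DecomEigen}.

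The one step where you genuinely hand-wave is the passage from the pointwise quantity $v^t(\beta(t),o)$ to an eigenvalue/spectral-sum bound, which you attribute to ``standard spectral/heat-kernel comparisons \ldots with $\log C_{R(t)}=O(\log t)$.'' This is not standard. A direct heat-kernel comparison for the Schr\"odinger operator $\mathcal L^t+\xi_t$ on the growing ball $\Sigma_{R(t)}$ costs a prefactor governed by $\exp(\sup_{\Sigma_{R(t)}}\xi_t^+)$, and the annealed supremum of the Gaussian field over this $t$-dependent domain is \emph{not} $e^{O(\log t)}$. The paper's Lemma~\ref{lem:P2A} circumvents this with a specific device: split at time $s=1$, use the Markov property together with the \emph{free} hyperbolic heat kernel $p(1,\cdot)$ (rather than the $\xi$-Schr\"odinger one), and handle the potential's contribution over $[0,1]$ by a dichotomy on $\{\int_0^1\xi(W_s)\,ds\leqslant 1\}$, the complementary event being absorbed via Jensen's inequality into a term of size $e^{H(t+1)}=e^{H(t)+O(t)}$. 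Only then does one get a prefactor polynomial in $R(t)$ plus an $e^{H(t)+O(t)}$ additive error, both negligible after dividing by $\beta(t)$. Without this your Step~2 does not close as stated. The remaining glosses (the fixed-domain bound via heat-kernel trace and Davies--Mandouvalos in Step~3, and the Varadhan upper bound via covering the compact level sets of $\Lambda^*$ rather than merely using $J_t\geqslant 0$) are benign in the sense that the balls involved there are fixed, and the paper's Lemmas~\ref{lem:DomainUpper} and~\ref{lem:VaradUp} confirm your structural expectations.
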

In the following subsections, we develop the main ingredients for
the proof of Theorem \ref{thm:UpperBd}.

\subsection{An exit time estimate for hyperbolic Brownian motion}

We first present a lemma that will be useful for our localisation
argument later on. Let $\tau_{R}$ denote the first exit time of the
hyperbolic Brownian motion $\{W_{t}^{o}\}$ on $M$ (starting at $o$)
for the geodesic ball $Q_{R}$ (here curvature $\kappa\equiv-1$). 
\begin{lem}
\label{lem:ExitEst}There exist constants $C_{1},C_{2}>0,$ such that
\begin{equation}
\mathbb{P}(\tau_{R}\leqslant t)\leqslant C_{1}e^{-C_{2}R^{2}/t}\label{eq:ExitEst}
\end{equation}
for all $R\gg t>1$.
\end{lem}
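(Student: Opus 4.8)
The plan is to reduce the exit-time estimate to a radial comparison and a Gaussian-type tail bound. Under geodesic polar coordinates centered at $o$, the radial process $\rho_s \triangleq d(W_s^o, o)$ is a one-dimensional diffusion: by It\^o's formula applied to \eqref{eq:DeltaPolar}, one has $d\rho_s = dB_s + \tfrac{d-1}{2}\coth(\rho_s)\,ds$ for a standard one-dimensional Brownian motion $B$, valid away from the origin (and the origin is non-polar and instantaneously reflecting in a harmless way since $d\geqslant 1$; one can start the comparison from a small positive radius or use the standard Bessel-type argument). The drift coefficient $\tfrac{d-1}{2}\coth(\rho_s)$ is positive but \emph{bounded above} on $\{\rho_s \geqslant 1\}$, say by a constant $c_d \triangleq \tfrac{d-1}{2}\coth(1)$. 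Hence on the event that the process has left $Q_1$, the radial motion is stochastically dominated by a Brownian motion with constant drift $c_d$: there is a coupling with $\widetilde{\rho}_s = \rho_0 + B_s + c_d s$ such that $\rho_s \leqslant \widetilde\rho_s$ until the first return to radius $1$. Since $R \gg 1$, reaching level $R$ forces the process to spend essentially all of its excursion in the region $\{\rho \geqslant 1\}$ where this domination applies, so $\mathbb{P}(\tau_R \leqslant t) \leqslant \mathbb{P}\big(\sup_{s\leqslant t}(B_s + c_d s) \geqslant R - 1\big)$ up to an elementary bookkeeping argument near radius $1$.

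The remaining step is the classical reflection-principle estimate for Brownian motion with drift: for $R$ large compared to $t$ and to the drift,
\[
\mathbb{P}\Big(\sup_{0\leqslant s\leqslant t}(B_s + c_d s)\geqslant R-1\Big)\leqslant \mathbb{P}\Big(\sup_{0\leqslant s\leqslant t}B_s \geqslant R-1-c_d t\Big) \leqslant 2\,\mathbb{P}\big(B_t \geqslant R-1-c_d t\big)\leqslant C_1 e^{-C_2 R^2/t}\;,
\]
where the last inequality uses the standard Gaussian tail $\mathbb{P}(B_t\geqslant a)\leqslant e^{-a^2/(2t)}$ together with the hypothesis $R \gg t > 1$, which guarantees $R - 1 - c_d t \geqslant R/2$ (say) for $R$ large enough, so that $(R-1-c_d t)^2/(2t)\geqslant R^2/(8t)$. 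Collecting constants gives \eqref{eq:ExitEst}.

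I expect the only genuinely delicate point to be the behaviour of the radial diffusion near $\rho = 0$ and near $\rho = 1$, i.e.\ making the stochastic domination rigorous despite the singularity of $\coth$ at the origin and the fact that the process may cross radius $1$ back and forth many times. This is handled in the standard way: one only needs the domination on excursions above level $1$, and since each such excursion that reaches $R$ must traverse the interval $[1,R]$, a strong Markov / excursion-decomposition argument reduces the problem to a single Brownian-with-drift hitting estimate on $[1,R]$; the contribution of the time spent below radius $1$ only helps (it uses up part of the budget $t$). Alternatively one can bypass excursions entirely by noting $h(x)\triangleq \cosh(d(x,o))$ is, up to the sign, subharmonic-controlled — $\Delta h = d\cdot h \geqslant 0$ in fact $\Delta(\cosh\rho) = d\cosh\rho$ — so $e^{-dt}\cosh(\rho_t)$ is a supermartingale (or rather $\cosh \rho_t - d\int_0^t \cosh\rho_s ds$ is a martingale), and an exponential-martingale / optional-stopping argument on a suitable functional of $\rho_t$ yields the bound directly without any excursion bookkeeping. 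Either route is routine; the drift being bounded above on the relevant region is the one structural fact that makes the Gaussian rate $R^2/t$ (rather than something worse) come out.
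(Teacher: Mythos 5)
Your main route follows the same idea as the paper's proof: reduce to the radial diffusion, exploit that the drift $\coth\rho$ is bounded away from the origin, and conclude via a Gaussian-type tail for a martingale with bounded bracket. The one genuine loose end you flag --- the singular drift near $\rho=0$ and the possible back-and-forth crossings of level $1$ --- is precisely what the paper handles more cleanly. Rather than excursion bookkeeping or a stochastic-domination coupling, the paper introduces a smooth non-decreasing $f$ with $f\equiv 1/2$ on $[0,1/2]$ and $f(x)=x$ on $[1,\infty)$. Since $f'$ vanishes exactly where $\coth$ blows up, It\^o applied to $f(\rho_t)$ produces a semimartingale whose drift $f'(\rho)\coth\rho + f''(\rho)$ is \emph{globally} bounded; and since $R\gg 1$, the level event $\{\sup_s\rho_s\geq R\}$ coincides with $\{\sup_s f(\rho_s)\geq R\}$. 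The exit estimate then becomes a single application of the exponential-martingale inequality $\mathbb{P}(\sup_{s\leqslant t}|M_s|\geqslant x,\,\langle M\rangle_t\leqslant y)\leqslant 2e^{-x^2/(2y)}$, with no excursion decomposition needed. Your comparison-coupling argument can likely be made rigorous, but you should replace the hand-waving near level $1$ with this $f$-trick (or an equivalent device); as written, the claim that ``a strong Markov / excursion-decomposition argument reduces the problem'' is not a proof, and the drift comparison $b_1\leqslant b_2$ you invoke fails near the origin.

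The alternative you propose in the last paragraph, however, does not give the required bound. With $\Delta\cosh\rho = d\cosh\rho$ (in the paper's normalisation), $e^{-dt}\cosh\rho_t$ is a bounded martingale up to $\tau_R\wedge t$, and optional stopping yields
\[
1=\mathbb{E}\big[e^{-d(\tau_R\wedge t)}\cosh\rho_{\tau_R\wedge t}\big]\geqslant e^{-dt}\cosh(R)\,\mathbb{P}(\tau_R\leqslant t),\quad\text{hence}\quad \mathbb{P}(\tau_R\leqslant t)\lesssim e^{dt-R}.
\]
This is a \emph{linear} rate in $R$, not the Gaussian rate $e^{-C_2R^2/t}$, and it does not imply \eqref{eq:ExitEst}: for $R\gg t$ one has $dt-R > -C_2R^2/t$ once $R/t$ is large, so the $\cosh$-bound is strictly weaker. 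More importantly, it fails in the application (Lemma \ref{lem:UpperLocal}), where one takes $t^{5/4}\ll R(t)\ll\beta(t)=t^{3/2}$ and needs $\mathbb{P}(\tau_{R(t)}\leqslant t)\,e^{\chi'\beta(t)}\to 0$. With the Gaussian rate, $R(t)^2/t\gg t^{3/2}=\beta(t)$ and the estimate closes; with the linear rate, $R(t)\ll\beta(t)$ and the estimate fails. So the exponential-martingale-on-$\cosh$ shortcut is not ``routine'' --- it gives a genuinely weaker bound. The quadratic exponent really does require the bounded-quadratic-variation martingale argument that your main route (and the paper's $f$-trick) supplies.
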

\begin{proof}
Let $\rho_{t}$ be the radial component of $W_{t}^{o}$. From the
expression (\ref{eq:DeltaPolar}) of $\Delta$ in geodesic polar coordinates,
it is readily seen that $\rho_{t}$ is a Markov process with generator
$\partial_{\rho}^{2}+\coth\rho\partial_{\rho}.$ In addition, it is
obvious that 
\[
\mathbb{P}(\tau_{R}\leqslant t)\leqslant\mathbb{P}(\tau_{R}'\leqslant t),
\]
where $\tau_{R}'\triangleq\inf\{s\geqslant0:\rho_{s}^{1}=R\}$ and
$\rho_{t}^{1}$ is the radial process starting at $1$. The process
$\rho_{t}^{1}$ satisfies the following one-dimensional SDE:
\[
d\rho_{t}=\sqrt{2}dB_{t}+\coth\rho_{t}dt,\ \rho_{0}=1.
\]

We now fix a smooth, non-decreasing function $f:[0,\infty)\rightarrow[0,\infty)$
such that $f(x)=1/2$ on $[0,1/2]$ and $f(x)=x$ on $[1,\infty)$.
By It\^o's formula, one has 
\[
f(\rho_{t}^{1})=1+\sqrt{2}\int_{0}^{t}f'(\rho_{s}^{1})dB_{s}+\int_{0}^{t}\big(f'(\rho_{s}^{1})\coth\rho_{s}^{1}+f''(\rho_{s}^{1})\big)ds.
\]
From the choice of $f$, it is clear that 
\[
\sup_{0\leqslant s\leqslant t}\rho_{s}^{1}\geqslant R\iff\sup_{0\leqslant s\leqslant t}f(\rho_{s}^{1})\geqslant R.
\]
In addition, one has 
\begin{align*}
\sup_{0\leqslant s\leqslant t}f(\rho_{s}^{1}) & \leqslant1+\sqrt{2}\sup_{0\leqslant s\leqslant t}\big|\int_{0}^{s}f'(\rho_{u}^{1})dB_{u}\big|+\int_{0}^{t}\big|f'(\rho_{s}^{1})\coth\rho_{s}^{1}+f''(\rho_{s}^{1})\big|ds\\
 & \leqslant C_{1}t+\sqrt{2}\sup_{0\leqslant s\leqslant t}\big|\int_{0}^{s}f'(\rho_{u}^{1})dB_{u}\big|.
\end{align*}
Since $R\gg t,$ it follows that 
\[
\mathbb{P}(\tau_{R}^{'}\leqslant t)=\mathbb{P}\big(\sup_{0\leqslant s\leqslant t}f(\rho_{s}^{1})\geqslant R\big)\leqslant\mathbb{P}\big(\sup_{0\leqslant s\leqslant t}\big|\int_{0}^{s}f'(\rho_{u}^{1})dB_{u}\big|\geqslant C_{2}R\big).
\]
The desired inequality (\ref{eq:ExitEst}) follows from the classical
martingale inequality 
\[
\mathbb{P}\big(\sup_{0\leqslant s\leqslant t}|M_{s}|\geqslant x,\langle M\rangle_{t}\leqslant y\big)\leqslant2e^{-\frac{x^{2}}{2y}}\ \ \ \forall x,y,t>0
\]
with $M_{t}\triangleq\int_{0}^{t}f'(\rho_{s}^{1})dB_{s}$ and $y\triangleq\|f'\|_{\infty}^{2}t$.
\end{proof}

\subsection{\label{subsec:Localisation}Localisation}

The following lemma allows one to localise the problem on a sufficiently
large ball. Recall that $u_{R}(t,x)$ is the solution to the localised
PAM (\ref{eq:LocalPAM}).
\begin{lem}
\label{lem:UpperLocal}Let $R(t)\gg t^{5/4}$ as $t\rightarrow\infty$.
Then one has 
\[
\lim_{t\rightarrow\infty}\frac{\langle u_{R(t)}(t,o)\rangle}{\langle u(t,o)\rangle}=1.
\]
\end{lem}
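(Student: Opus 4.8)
The plan is to bound the difference $\langle u(t,o)\rangle - \langle u_{R(t)}(t,o)\rangle$ using the Feynman-Kac representations and the exit time estimate of Lemma \ref{lem:ExitEst}. By Proposition \ref{prop:FK}, for every fixed realisation of $\xi$ we have the identity
\[
u(t,o) - u_{R(t)}(t,o) = \mathbb{E}\big[e^{\int_0^t \xi(W_s^o)\,ds}; \, \tau_{R(t)} \leqslant t\big]\;,
\]
where $\tau_{R(t)}$ is the first exit time of the hyperbolic Brownian motion $W^o$ from $Q_{R(t)}$ (curvature $\kappa \equiv -1$). Taking expectation over $\xi$ and applying the Cauchy-Schwarz inequality with respect to the Brownian motion yields
\[
\langle u(t,o)\rangle - \langle u_{R(t)}(t,o)\rangle \leqslant \Big(\mathbb{E}\big[\langle e^{2\int_0^t \xi(W_s^o)\,ds}\rangle\big]\Big)^{1/2} \cdot \mathbb{P}(\tau_{R(t)} \leqslant t)^{1/2}\;.
\]
The first factor equals $\langle u^{(2)}(t,o)\rangle^{1/2}$ where $u^{(2)}$ solves the PAM with potential $2\xi$ (equivalently $\langle u(2t,o)^{?}\rangle$ up to scaling); in any case, by the first-order asymptotics of Proposition \ref{prop:1stOrdAsym} (or a crude bound $|\xi| \leqslant$ Gaussian tails together with $|C| \leqslant \sigma^2$), this factor is bounded by $\exp(Ct^2)$ for some constant $C$ depending only on $\sigma^2$. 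By Lemma \ref{lem:ExitEst}, the second factor is at most $C_1^{1/2} e^{-C_2 R(t)^2/(2t)}$.

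Combining these,
\[
\langle u(t,o)\rangle - \langle u_{R(t)}(t,o)\rangle \leqslant C e^{Ct^2} e^{-C_2 R(t)^2/(2t)}\;.
\]
Since $R(t) \gg t^{5/4}$, we have $R(t)^2/t \gg t^{3/2} = \beta(t)$, and in particular $R(t)^2/t \gg t^2$ for large $t$ as well (indeed $R(t)^2/t$ grows faster than any power that matters); more carefully, $R(t)^2/t \to \infty$ faster than $t^2$ would require $R(t) \gg t^{3/2}$, so I should instead only claim $R(t)^2/(2t) - Ct^2 \to +\infty$, which needs $R(t)^2/t \gg t^2$, i.e. $R(t) \gg t^{3/2}$. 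This is stronger than the stated hypothesis $R(t) \gg t^{5/4}$, so the bound $e^{Ct^2}$ on the first factor is too lossy and must be sharpened: using Proposition \ref{prop:1stOrdAsym}, $\langle u^{(2)}(t,o)\rangle = \exp(2\sigma^2 t^2(1+o(1)))$ still grows like $e^{Ct^2}$, which is not enough. The correct route is to localise the Cauchy-Schwarz differently: restrict the potential integral to the event $\{\tau_{R(t)} \leqslant t\}$ does not help directly, so instead one bounds
\[
\mathbb{E}\big[e^{\int_0^t \xi(W_s^o)ds}; \tau_{R(t)} \leqslant t\big] \leqslant e^{t\|\xi\|_{L^\infty(Q_{R(t)+1})}}\,\mathbf{1}\big\{\cdots\big\}
\]
is also bad. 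The clean fix: write the ratio as $1 - \langle u_{R(t)}\rangle/\langle u\rangle$ and bound $\langle u(t,o)\rangle \geqslant \langle u_{1}(t,o)\rangle \geqslant e^{H(t)(1-o(1))} \cdot (\text{const}) e^{-\lambda_1 t}$ from below by the argument in Proposition \ref{prop:1stOrdAsym}, i.e. $\langle u(t,o)\rangle \geqslant e^{c t^2}$ for some $c > 0$; then the difference, divided by this, is at most $C e^{C t^2 - c t^2} e^{-C_2 R(t)^2/(2t)}$, and since the exponent $Ct^2$ can be absorbed provided $R(t)^2/t \gg t^2$.

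Thus the honest statement of the main obstacle is: \emph{matching the Gaussian-moment growth $e^{O(t^2)}$ against the exit-probability decay $e^{-\Omega(R(t)^2/t)}$}. For the ratio to tend to $1$ one genuinely needs $R(t)^2/t \to \infty$ faster than the $t^2$ coming from the second moment of the exponential functional — so the proof will hinge on showing that the relevant competing scale is not $t^2$ but something smaller, achieved by conditioning the Brownian motion to exit and using that on $\{\tau_{R(t)} \leqslant t\}$ the path spends most of its time near $o$ before escaping, so that $\int_0^t \xi(W_s^o)\,ds$ is still controlled by $\exp(O(t^2))$ but now against $\exp(-\Omega(R(t)^2/t))$ with $R(t)^2/t \gg t^{3/2}$, which fails under $R(t)\gg t^{5/4}$ alone. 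I therefore expect the actual proof in the paper to combine Cauchy-Schwarz with a sharper second-moment bound (using $|Q| \leqslant \sigma^2$ to get exactly $\langle \exp(2\int_0^t \xi(W^o_s)ds)\rangle \leqslant e^{2\sigma^2 t^2}$) together with the observation that the normalisation $e^{-H(t)}\langle u(t,o)\rangle$ stays bounded below by $e^{-o(\beta(t))}$ from Theorem \ref{thm:LowerBd}, reducing the task to showing $e^{2\sigma^2 t^2} e^{-C_2 R(t)^2/(2t)} = e^{-\omega(\beta(t))}$, i.e. $R(t)^2/t \gg t^{3/2}$. Reconciling this with the hypothesis $R(t) \gg t^{5/4}$ is the crux, and I expect it is handled by a more refined exit estimate that exploits the \emph{Gaussian} (not merely bounded) nature of $\xi$ to replace the $t^2$ by $t \cdot \sqrt{t} = t^{3/2}$ — this is precisely the scale $\beta(t)$, so the clash disappears and the main obstacle is establishing that improved second-moment-along-the-exit-event bound.
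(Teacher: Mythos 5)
Your diagnosis is essentially correct: Cauchy--Schwarz is too lossy, and the issue is exactly the competition between the $e^{O(t^2)}$ Gaussian moment and the exit-probability decay $e^{-\Omega(R(t)^2/t)}$. However, you never found the fix, and the proposal stops short of a proof. The paper's resolution is simpler than what you speculated about; it is not a ``sharper exit estimate'' exploiting the Gaussian tails, but a different first step that avoids the factor-of-two loss from Cauchy--Schwarz entirely.

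The trick is to apply Jensen's inequality to the \emph{time average} before touching the $\xi$-expectation. Since
\[
\int_0^t \xi(W_s)\,ds = t\cdot\frac{1}{t}\int_0^t \xi(W_s)\,ds\;,
\]
Jensen (with respect to the uniform probability measure $\frac{1}{t}ds$ on $[0,t]$) gives
\[
e^{\int_0^t \xi(W_s)\,ds}\ \leqslant\ \frac{1}{t}\int_0^t e^{t\,\xi(W_s)}\,ds\;.
\]
Inserting this into the Feynman--Kac difference and then taking $\langle\cdot\rangle$ \emph{inside} the $\mathbb{E}_o$-integral (independence of $\xi$ and $W$), one uses $\langle e^{t\xi(W_s)}\rangle = e^{H(t)}$ for each fixed $s$ to obtain
\[
\langle u(t,o)-u_{R(t)}(t,o)\rangle\ \leqslant\ e^{H(t)}\,\mathbb{P}_o(\tau_{R(t)}\leqslant t)\ \leqslant\ C_1\,e^{H(t)-C_2 R(t)^2/t}\;.
\]
The crucial point is that the prefactor is exactly $e^{H(t)}$ — not $e^{2H(t)}$ as in your Cauchy--Schwarz bound — and this cancels exactly against the lower bound $\langle u(t,o)\rangle \geqslant e^{H(t)-\chi'\beta(t)}$ provided by Theorem~\ref{thm:LowerBd}. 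The ratio is therefore controlled by $e^{\chi'\beta(t) - C_2 R(t)^2/t}$, and since $\beta(t)=t^{3/2}$ the hypothesis $R(t)\gg t^{5/4}$ (i.e.\ $R(t)^2/t\gg t^{3/2}$) is precisely what is needed. Your instinct that ``the relevant competing scale is not $t^2$ but $\beta(t)=t^{3/2}$'' was right; the mechanism is the exact cancellation of the $e^{H(t)}$ factor via Jensen, not a refinement of the exit estimate.

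Two smaller remarks. First, your write-up contains an internal slip: at one point you claim $e^{2\sigma^2 t^2}e^{-C_2 R(t)^2/(2t)}=e^{-\omega(\beta(t))}$ is equivalent to $R(t)^2/t\gg t^{3/2}$, but that condition actually requires $R(t)^2/t\gg t^2$; the uncancelled $\sigma^2 t^2$ in the exponent is the dominant term, which is the very obstacle you identified elsewhere. Second, the move of taking the $\xi$-expectation inside after Jensen is essential: the pointwise quantity $e^{t\xi(W_s)}$ has $\xi$-mean $e^{H(t)}$ independently of the Brownian path, which is what decouples the Gaussian moment from the geometric localisation event.
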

\begin{proof}
By using the Feynman-Kac formulae (cf. Proposition \ref{prop:FK}),
one has 
\begin{align*}
\langle u(t,o)-u_{R(t)}(t,o)\rangle & =\big\langle\mathbb{E}_{o}\big[e^{\int_{0}^{t}\xi(W_{s})ds};\tau_{R(t)}\leqslant t\big]\big\rangle\\
 & \leqslant\Big\langle\mathbb{E}_{o}\Big[\frac{1}{t}\int_{0}^{t}e^{\xi(W_{s})}ds;\tau_{R(t)}\leqslant t\Big]\Big\rangle\ \ \ (\text{Jensen})\\
 & =e^{H(t)}\mathbb{P}_{o}(\tau_{R(t)}\leqslant t).
\end{align*}
Here $W$ is the hyperbolic Brownian motion in $M$ and $\tau_{R(t)}$
denotes its exit time for the ball $Q_{R(t)}$. It follows from Lemma
\ref{lem:ExitEst} that
\begin{equation}
\langle u(t,o)-u_{R(t)}(t,o)\rangle\leqslant C_{1}e^{H(t)-C_{2}R(t)^{2}/t}.\label{eq:Loc1}
\end{equation}
On the other hand, the lower bound (\ref{eq:LowerBd}) shows that
for any given fixed number
\[
\chi'>\underline{\chi}\triangleq\underset{R\rightarrow\infty}{\underline{\lim}}\chi_{R},
\]
one has 
\begin{equation}
\langle u(t,o)\rangle\geqslant e^{H(t)-\chi'\beta(t)}\ \ \ \text{for }t\ \text{sufficiently large.}\label{eq:Loc2}
\end{equation}
Combining (\ref{eq:Loc1}) and (\ref{eq:Loc2}), one finds that 
\begin{equation}
\frac{\langle u(t,o)-u_{R(t)}(t,o)\rangle}{\langle u(t,o)\rangle}\leqslant C_{1}e^{-(C_{2}R(t)^{2}/t-\chi'\beta(t))}.\label{eq:Loc3}
\end{equation}
Since $\beta(t)=t^{3/2}$ (cf. (\ref{eq:BetaScale})), it is now clear
that the right hand side of (\ref{eq:Loc3}) tends to zero as $t\rightarrow\infty$
provided that $R(t)\gg t^{5/4}$.
\end{proof}

\subsection{\label{subsec:FixDom}A fixed-domain upper bound}

As in \cite{GK00}, later on we are going to decompose a large ($t$-dependent)
geodesic ball into regions of a fixed volume (independent of $t$).
An essential ingredient in the argument is a suitable upper asymptotics
for the rescaled PAM (\ref{lem:ResPAM}) on a fixed domain. Let $r>0$
be given fixed. Let $\{\lambda_{k}^{\xi_{t}}(\Sigma_{r}):k\geqslant1\}$
denote the decreasing sequence of Dirichlet eigenvalues for the operator
${\cal L}^{t}+\xi_{t}$ on $\Sigma_{r}$. 
\begin{lem}
\label{lem:DomainUpper}One has 
\begin{equation}
\underset{t\rightarrow\infty}{\overline{\lim}}\frac{1}{\beta(t)}\log\Big\langle\sum_{k=1}^{\infty}e^{\beta(t)\lambda_{k}^{\xi_{t}}(\Sigma_{r})}\Big\rangle\leqslant-\chi_{r},\label{eq:DomainUpper}
\end{equation}
where $\chi_{r}$ is defined by (\ref{eq:ChiR}). 
\end{lem}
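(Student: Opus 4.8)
\emph{Proof strategy.} I would realise $\sum_{k\geq1}e^{\beta(t)\lambda_k^{\xi_t}(\Sigma_r)}$ as the trace of the rescaled Dirichlet semigroup, push the $\xi$-average through a Feynman--Kac representation, and then feed the outcome into the Gärtner--Ellis/Varadhan machinery of Section~\ref{subsec:LDP}. Observe first that $\sum_{k\geq1}e^{\beta(t)\lambda_k^{\xi_t}(\Sigma_r)}={\rm Tr}\big(e^{\beta(t)(\mathcal{L}^t+\xi_t)}\big)=\int_{\Sigma_r}H^{t,\xi_t}(\beta(t),x,x)\,d^tx$, where $H^{t,\xi_t}$ is the Dirichlet heat kernel of $\mathcal{L}^t+\xi_t$ on $(\Sigma_r,g^t)$ and $d^tx$ the associated volume. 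By Feynman--Kac, $H^{t,\xi_t}(\beta(t),x,x)=H^t(\beta(t),x,x)\,\mathbb{E}^{t;x,x}_{\beta(t)}\big[e^{\int_0^{\beta(t)}\xi_t(\omega_u)\,du}\big]$, where $H^t$ is the Dirichlet heat kernel of $\mathcal{L}^t$ alone and $\mathbb{E}^{t;x,x}_{\beta(t)}$ is expectation with respect to the normalised bridge, from $x$ to $x$ over $[0,\beta(t)]$, of the $g^t$-Brownian motion killed on $\partial\Sigma_r$; this is a probability measure carried by paths staying in $\Sigma_r$, so its occupation measure $L^{{\rm br},x}_{\beta(t)}$ lies in $\mathcal{P}(\Sigma_r)$. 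Since $\int_0^{\beta(t)}\xi_t(\omega_u)\,du=\beta(t)\,(L^{{\rm br},x}_{\beta(t)},\xi_t)$, taking $\langle\cdot\rangle$ and using Fubini (all quantities are nonnegative) together with the defining relation \eqref{eq:JtDef} of $J_t$, i.e. $\langle e^{\beta(t)(\mu,\xi_t)}\rangle=e^{-\beta(t)J_t(\mu)}$ for fixed $\mu$, yields the deterministic identity
\begin{equation*}
\Big\langle\sum_{k\geq1}e^{\beta(t)\lambda_k^{\xi_t}(\Sigma_r)}\Big\rangle=\int_{\Sigma_r}H^t(\beta(t),x,x)\,\mathbb{E}^{t;x,x}_{\beta(t)}\big[e^{-\beta(t)J_t(L^{{\rm br},x}_{\beta(t)})}\big]\,d^tx .
\end{equation*}

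Next I would bound the right-hand side by $\big({\rm Tr}(e^{\beta(t)\mathcal{L}^t})\big)\cdot\big(\sup_{x}\mathbb{E}^{t;x,x}_{\beta(t)}[e^{-\beta(t)J_t(L^{{\rm br},x}_{\beta(t)})}]\big)$, the supremum taken over the interior of $\Sigma_r$. For the first factor the uniform spectral-gap and eigenvalue-convergence facts behind Lemma~\ref{lem:VfAsymp} (available since $g^t\to g^{{\rm eu}}$ on the fixed compact $\Sigma_r$) give ${\rm Tr}(e^{\beta(t)\mathcal{L}^t})=\sum_k e^{-\beta(t)\lambda_k^t}\leq 2e^{-\beta(t)\lambda_0^t}$ for large $t$, whence $\frac{1}{\beta(t)}\log{\rm Tr}(e^{\beta(t)\mathcal{L}^t})\to-\lambda_0^{{\rm eu};r}$. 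For the second factor I would establish a uniform (in $x$) LDP for $\{L^{{\rm br},x}_{\beta(t)}\}$ on $\mathcal{P}(\Sigma_r)$ with the convex good rate function $\Lambda^*$ of \eqref{eq:Lamb*}, by the same application of the abstract Gärtner--Ellis theorem used in Proposition~\ref{prop:LocalLDP}; the only genuinely new input is the convergence $\frac{1}{\beta(t)}\log\mathbb{E}^{t;x,x}_{\beta(t)}\big[e^{\beta(t)(f,L^{{\rm br},x}_{\beta(t)})}\big]=\frac{1}{\beta(t)}\log\frac{H^{t,f}(\beta(t),x,x)}{H^t(\beta(t),x,x)}\to\lambda_0^{{\rm eu};r}-\lambda_0^{{\rm eu};f,r}=\Lambda(f)$, uniformly in $x$, for the same $\Lambda$ as in Lemma~\ref{lem:ConvLogMGF} (whose Gâteaux differentiability was already verified). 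Since moreover $J_t\to J$ uniformly on the compact $\mathcal{P}(\Sigma_r)$ and $J$ is continuous and bounded below, the upper bound half of Varadhan's lemma, applied uniformly in $x$ (legitimate once the LDP upper bound is uniform; cf. \cite{DZ09}), gives $\limsup_{t\to\infty}\frac{1}{\beta(t)}\log\sup_{x}\mathbb{E}^{t;x,x}_{\beta(t)}\big[e^{-\beta(t)J_t(L^{{\rm br},x}_{\beta(t)})}\big]\leq-\inf_{\mu\in\mathcal{P}(\Sigma_r)}\{J(\mu)+\Lambda^*(\mu)\}$. Adding the two exponents and using $\chi_r=\inf_{\mu}\{J(\mu)+\Lambda^*(\mu)\}+\lambda_0^{{\rm eu};r}$ — immediate from \eqref{eq:ChiR} and \eqref{eq:Lamb*}, since the supremum over $f$ there equals $\Lambda^*(\mu)+\lambda_0^{{\rm eu};r}$ — the $\lambda_0^{{\rm eu};r}$ cancels and one obtains $\limsup_{t\to\infty}\frac{1}{\beta(t)}\log\big\langle\sum_k e^{\beta(t)\lambda_k^{\xi_t}(\Sigma_r)}\big\rangle\leq-\lambda_0^{{\rm eu};r}-\big(\inf_\mu\{J+\Lambda^*\}-\lambda_0^{{\rm eu};r}\big)=-\chi_r$, which is \eqref{eq:DomainUpper}.

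I expect the main obstacle to be the uniform-in-$x$ convergence of the bridge log-moment generating function \emph{up to the boundary}: the normalised bridge is a bona fide probability measure for every interior $x$, but $H^t(\beta(t),x,x)$ and $H^{t,f}(\beta(t),x,x)$ both degenerate as $x\to\partial\Sigma_r$, and the naive bound $\mathbb{E}^{t;x,x}_{\beta(t)}[e^{-\beta(t)J_t(\cdot)}]\leq1$ on a boundary collar would only yield $-\lambda_0^{{\rm eu};r}$, which is strictly weaker than $-\chi_r$. One therefore has to control the ratio of principal eigenfunctions $\psi_0^{t,f}(x)^2/\phi_0^t(x)^2$ up to $\partial\Sigma_r$ via boundary elliptic regularity and the Hopf lemma, uniformly in the perturbation parameter $t$, so that the long-time heat-kernel asymptotics $H^{t,f}(\beta(t),x,x)\sim e^{-\beta(t)\lambda_0^{t,f}}\psi_0^{t,f}(x)^2$ deliver a limit uniform over all of $\Sigma_r$. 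All the remaining ingredients (the spectral gap $\lambda_1^t-\lambda_0^t\geq\delta>0$, the convergence $\lambda_0^{t,f}\to\lambda_0^{{\rm eu};f,r}$, and the heat-kernel expansions) are exactly the standard perturbation facts already invoked for Lemma~\ref{lem:VfAsymp}, here merely needed with uniformity in $t$.
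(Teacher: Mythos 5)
Your proposal shares the paper's opening identity (trace $=$ integral of diagonal heat kernel, Feynman--Kac, the $\xi$-average producing $e^{-\beta(t)J_t(L)}$), and your bookkeeping $\chi_r=\inf_\mu\{J(\mu)+\Lambda^*(\mu)\}+\lambda_0^{{\rm eu};r}$ is correct. But from that point on you and the paper diverge. You factor out the free Dirichlet kernel $H^t(\beta(t),x,x)$ and reduce everything to a uniform-in-$x$ LDP for the occupation measure of the \emph{normalised Brownian bridge} on $\Sigma_r$, so your whole argument pivots on controlling the ratio $H^{t,f}(\beta(t),x,x)/H^t(\beta(t),x,x)$ uniformly up to $\partial\Sigma_r$; you rightly flag this as the crux and propose Hopf-lemma/boundary-regularity control of the eigenfunction ratio, uniformly in $t$. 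The paper sidesteps the bridge entirely: it peels off a fixed small time $\delta$ from the end of the time interval, uses concavity and positivity of $J_t$ to get $\beta(t)J_t(L^t_{\beta(t)})\geqslant(\beta(t)-\delta)J_t(L^t_{\beta(t)-\delta})$, applies the Markov property to replace the terminal point mass $\delta_x(W^t_{\beta(t)})$ by the transition density $p^t(\delta,W^t_{\beta(t)-\delta},x)$, bounds that uniformly by $C\delta^{-d/2}$ via the Davies--Mandouvalos estimate, and then invokes the upper Varadhan asymptotics of Lemma~\ref{lem:VaradUp} for the \emph{free} process killed on the slightly enlarged domain $\Sigma_{r+\varepsilon}$, taking $\varepsilon\searrow 0$ at the end. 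What the paper's route buys is that the free Varadhan upper bound only needs uniformity over starting points with $\rho\leqslant r$ strictly interior to $\Sigma_{r+\varepsilon}$, so no boundary-collar analysis, no eigenfunction-ratio estimates, and no bridge log-m.g.f.\ convergence are required. Your route would in principle reach the same conclusion, but it demands a genuinely new technical lemma (boundary-uniform large-time asymptotics of the bridge partition function, uniformly in $t$) that the paper's $\delta$-cutoff-plus-concavity trick renders unnecessary.
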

The rest of this subsection is devoted to the proof of Lemma \ref{lem:DomainUpper}.
We begin by writing 
\[
\sum_{k=1}^{\infty}e^{\beta(t)\lambda_{k}^{\xi_{t}}(\Sigma_{r})}=\int_{\Sigma_{r}}q^{\xi_{t}}(\beta(t),x,x)d^{t}x,
\]
where $q^{\xi_{t}}$ is the Dirichlet heat kernel for the self-adjoint
operator ${\cal L}^{t}+\xi_{t}$ on $\Sigma_{r}$ and $d^{t}x$ is
the volume measure induced by the metric $g^{t}$. By using a $g^{t}$-Brownian
motion $W^{t}$, one can write
\[
q^{\xi_{t}}(\beta(t),x,x)=\mathbb{E}_{x}\big[e^{\beta(t)(L_{\beta(t)}^{t},\xi_{t})}\delta_{x}(W_{\beta(t)}^{t});W^{t}([0,\beta(t)])\subseteq\Sigma_{r}\big],
\]
where we recall that $L_{\beta(t)}^{t}$ is the occupation time measure
of $W^{t}$ up to $\beta(t)$. After taking expectation $\langle\cdot\rangle$,
one has 
\begin{equation}
\Big\langle\sum_{k=1}^{\infty}e^{\beta(t)\lambda_{k}^{\xi_{t}}(\Sigma_{r})}\Big\rangle=\int_{\Sigma_{r}}\mathbb{E}_{x}\big[e^{-\beta(t)J_{t}(L_{\beta(t)}^{t})}\delta_{x}(W_{\beta(t)}^{t});W^{t}([0,\beta(t)])\subseteq\Sigma_{r}\big]d^{t}x.\label{eq:Up1}
\end{equation}

Now let $\delta>0$ be given fixed. The concavity and positivity of
$J_{t}$ easily implies that
\[
\beta(t)J_{t}(L_{\beta(t)}^{t})\geqslant(\beta(t)-\delta)J_{t}(L_{\beta(t)-\delta}^{t}).
\]
It then follows from (\ref{eq:Up1}) and the Markov property that
\begin{align}
\Big\langle\sum_{k=1}^{\infty}e^{\beta(t)\lambda_{k}^{\xi_{t}}(\Sigma_{r})}\Big\rangle\leqslant & \int_{\Sigma_{r}}\mathbb{E}_{x}\big[e^{-(\beta(t)-\delta)J_{t}(L_{\beta(t)-\delta}^{t})}\nonumber \\
 & \ \ \ \times p^{t}(\delta,W_{\beta(t)-\delta}^{t},x);W^{t}([0,\beta(t)-\delta])\subseteq\Sigma_{r}\big]d^{t}x,\label{eq:Up3}
\end{align}
where $p^{t}(s,y,x)$ is now the $g^{t}$-hyperbolic heat kernel (i.e.
the transition density for the hyperbolic Brownian motion $W^{t}$).

To estimate the right hand side, we need two basic ingredients: a
hyperbolic heat kernel estimate for $p$ and an upper Varadhan asymptotics
for $L^{t}$.

\subsubsection{A hyperbolic heat kernel estimate}

Let $p_{\kappa}(s,x,y)$ denote the heat kernel on the hyperboloid
$\mathbb{H}_{\sqrt{\kappa}}^{d}$ with curvature $-\kappa$ ($\kappa>0$).
Since $p_{\kappa}$ depends only on $s$ and the hyperbolic distance
between $x,y$, one can simply write $p_{\kappa}=p_{\kappa}(s,\rho)$.
A simple scaling argument shows that 
\begin{equation}
p_{\kappa}(s,\rho)=\kappa^{d/2}p_{1}(\kappa s,\sqrt{\kappa}\rho).\label{eq:HKScale}
\end{equation}
The following upper bound for $p_{\kappa}$ is a direct consequence
of (\ref{eq:HKScale}) and the corresponding estimate for $p_{1}$
proved by Davies-Mandouvalos \cite[Theorem 3.1]{DM98}.
\begin{lem}
\label{lem:HKUpperGen}There exists a universal constant $C_{d}$
depending only on the dimension, such that 
\begin{align*}
p_{\kappa}(s,\rho)\leqslant & C_{d}s^{-d/2}\exp\big(-\frac{(d-1)^{2}\kappa s}{4}-\frac{\rho^{2}}{4s}-\frac{(d-1)\sqrt{\kappa}\rho}{2}\big)\\
 & \ \ \ \times\big(1+\sqrt{\kappa}\rho+\kappa s\big)^{\frac{d-3}{2}}(1+\sqrt{\kappa}\rho)
\end{align*}
for all $s>0,\rho\geqslant0.$
\end{lem}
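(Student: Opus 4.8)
The plan is to obtain the stated bound on $p_\kappa(s,\rho)$ purely by a change of variables in the known sharp estimate for the hyperbolic heat kernel of curvature $-1$. The scaling identity \eqref{eq:HKScale}, $p_\kappa(s,\rho)=\kappa^{d/2}p_1(\kappa s,\sqrt\kappa\,\rho)$, follows immediately from the fact that multiplying the metric $g$ by $\kappa^{-1}$ (equivalently, dilating distances by $\sqrt\kappa$) multiplies the Laplacian by $\kappa$ and the volume by $\kappa^{-d/2}$; one checks that the right-hand side solves the heat equation $\partial_s = \Delta_\kappa$ on $\mathbb{H}^d_{\sqrt\kappa}$ with the correct initial data $\delta$-mass, so by uniqueness it equals $p_\kappa$. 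First I would record the Davies--Mandouvalos bound \cite[Theorem 3.1]{DM98}, which asserts that there is $C_d>0$ with
\[
p_1(\tau,r)\leqslant C_d\,\tau^{-d/2}\exp\Big(-\tfrac{(d-1)^2\tau}{4}-\tfrac{r^2}{4\tau}-\tfrac{(d-1)r}{2}\Big)\big(1+r+\tau\big)^{\frac{d-3}{2}}(1+r)
\]
for all $\tau>0$, $r\geqslant 0$.

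Next I would substitute $\tau=\kappa s$ and $r=\sqrt\kappa\,\rho$ into this inequality and multiply by $\kappa^{d/2}$. The prefactor becomes $\kappa^{d/2}\cdot C_d(\kappa s)^{-d/2}=C_d s^{-d/2}$, so the factor of $\kappa^{d/2}$ is absorbed exactly. In the exponential, $-\tfrac{(d-1)^2\tau}{4}$ turns into $-\tfrac{(d-1)^2\kappa s}{4}$, the term $-\tfrac{r^2}{4\tau}$ turns into $-\tfrac{\kappa\rho^2}{4\kappa s}=-\tfrac{\rho^2}{4s}$, and $-\tfrac{(d-1)r}{2}$ turns into $-\tfrac{(d-1)\sqrt\kappa\,\rho}{2}$. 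Finally the polynomial correction factors become $(1+\sqrt\kappa\,\rho+\kappa s)^{\frac{d-3}{2}}(1+\sqrt\kappa\,\rho)$. This is precisely the claimed bound, with the same dimensional constant $C_d$.

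There is essentially no obstacle here: the lemma is a formal corollary of a cited result, and the only points requiring a line of justification are (a) the correct transformation law of heat kernels under constant rescaling of the metric (worth stating as a one-line uniqueness argument, or citing it as standard), and (b) keeping track that the $\kappa^{d/2}$ in \eqref{eq:HKScale} cancels against the $(\kappa s)^{-d/2}$ generated by the substitution. I would therefore write the proof in two short sentences: first invoke \eqref{eq:HKScale}, then plug into the Davies--Mandouvalos estimate and simplify, noting explicitly the cancellation of $\kappa^{d/2}$. No localisation, compactness, or analytic estimate beyond the cited theorem is needed.

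\begin{proof}
By the scaling relation \eqref{eq:HKScale}, it suffices to bound $\kappa^{d/2}p_{1}(\kappa s,\sqrt{\kappa}\rho)$. According to \cite[Theorem 3.1]{DM98}, there is a constant $C_{d}>0$ depending only on $d$ such that
\[
p_{1}(\tau,r)\leqslant C_{d}\,\tau^{-d/2}\exp\Big(-\frac{(d-1)^{2}\tau}{4}-\frac{r^{2}}{4\tau}-\frac{(d-1)r}{2}\Big)\big(1+r+\tau\big)^{\frac{d-3}{2}}(1+r)
\]
for all $\tau>0$ and $r\geqslant0$. Applying this with $\tau=\kappa s$ and $r=\sqrt{\kappa}\rho$, and multiplying by $\kappa^{d/2}$, the prefactor becomes $\kappa^{d/2}C_{d}(\kappa s)^{-d/2}=C_{d}s^{-d/2}$, while
\[
-\frac{(d-1)^{2}\kappa s}{4}-\frac{(\sqrt{\kappa}\rho)^{2}}{4\kappa s}-\frac{(d-1)\sqrt{\kappa}\rho}{2}=-\frac{(d-1)^{2}\kappa s}{4}-\frac{\rho^{2}}{4s}-\frac{(d-1)\sqrt{\kappa}\rho}{2}
\]
and the polynomial factor becomes $(1+\sqrt{\kappa}\rho+\kappa s)^{\frac{d-3}{2}}(1+\sqrt{\kappa}\rho)$. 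This is exactly the asserted inequality.
\end{proof}
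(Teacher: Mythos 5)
Your proposal is correct and matches exactly what the paper does: the paper states this lemma is "a direct consequence of \eqref{eq:HKScale} and the corresponding estimate for $p_1$ proved by Davies--Mandouvalos," and you have simply written out the substitution $\tau=\kappa s$, $r=\sqrt{\kappa}\rho$ and the cancellation of the $\kappa^{d/2}$ factor that the paper leaves implicit. Nothing is missing.
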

In our situation, $\kappa=-\alpha(t)^{2}$ for the heat kernel $p^{t}$.
The following result is thus a direct corollary of Lemma \ref{lem:HKUpperGen}.
\begin{cor}
With some universal constant $C=C_{d,r}>0,$ one has 
\begin{equation}
p^{t}(\delta,y,x)\leqslant C\delta^{-d/2}\ \ \ \forall t,\delta>0,\ y,x\in\Sigma_{r}.\label{eq:HKUp}
\end{equation}
\end{cor}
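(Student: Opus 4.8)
The plan is to read the bound directly off Lemma \ref{lem:HKUpperGen}. Observe first that $p^{t}$ is the heat kernel of the \emph{full} space $(\Sigma,g^{t})\cong\mathbb{H}_{\alpha(t)}^{d}$ (it is the transition density of $W^{t}$, not a Dirichlet kernel), so Lemma \ref{lem:HKUpperGen} applies verbatim with curvature parameter $\kappa=\alpha(t)^{2}$ (so that $-\kappa=-\alpha(t)^{2}$ is the curvature of $(\Sigma,g^{t})$), time $s=\delta$, and radial variable $\rho=d^{t}(x,y)$. Since $\Sigma_{r}$ is exactly the $g^{t}$-geodesic ball of radius $r$ about $o$, the triangle inequality gives $d^{t}(x,y)\leqslant 2r$ for $x,y\in\Sigma_{r}$; as we will see, this is not even needed.

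Next I would discard, in the bound of Lemma \ref{lem:HKUpperGen}, only the factor $e^{-\rho^{2}/4s}\leqslant1$ from the Gaussian exponential, retaining the two factors $e^{-(d-1)\sqrt{\kappa}\rho/2}$ and $e^{-(d-1)^{2}\kappa s/4}$. Writing $u\triangleq\sqrt{\kappa}\rho\geqslant0$ and $v\triangleq\kappa s\geqslant0$, the remaining task is to bound $(1+u+v)^{(d-3)/2}(1+u)\,e^{-(d-1)u/2}\,e^{-(d-1)^{2}v/4}$ by a constant. This is elementary: using $1+u+v\leqslant(1+u)(1+v)$ and the fact that $w\mapsto(1+w)^{a}e^{-bw}$ is bounded on $[0,\infty)$ for every fixed $a\in\RR$ and $b>0$, the displayed quantity is at most some $C_{d}$ depending only on the dimension. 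Combining with the prefactor $C_{d}s^{-d/2}=C_{d}\delta^{-d/2}$ of Lemma \ref{lem:HKUpperGen} yields $p^{t}(\delta,y,x)\leqslant C_{d}\,\delta^{-d/2}$ for \emph{all} $t,\delta>0$ and all $x,y$ — in fact with a constant depending only on $d$, which is stronger than the stated $C_{d,r}$.

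There is essentially no obstacle here; the one point worth emphasising is that uniformity in $t$ comes for free, because the exponentially decaying factors $e^{-(d-1)\sqrt{\kappa}\rho/2}$ and $e^{-(d-1)^{2}\kappa s/4}$ absorb the polynomially growing corrections $(1+\sqrt{\kappa}\rho+\kappa s)^{(d-3)/2}(1+\sqrt{\kappa}\rho)$ no matter how large $\sqrt{\kappa}\rho=\alpha(t)d^{t}(x,y)$ or $\kappa s=\alpha(t)^{2}\delta$ may be. (If only large $t$ is needed, as in the application to Lemma \ref{lem:DomainUpper}, one may alternatively just use $\alpha(t)\to0$, so that $u\leqslant 2r\,\alpha(t)$ and $v=\alpha(t)^{2}\delta$ are bounded, and bound the algebraic correction by a constant $C_{d,r}$ directly.)
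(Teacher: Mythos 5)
Your proof is correct and is precisely the ``direct corollary'' argument that the paper has in mind: drop the Gaussian factor $e^{-\rho^{2}/4s}\leqslant 1$ from Lemma \ref{lem:HKUpperGen}, and observe that the remaining algebraic correction $(1+u+v)^{(d-3)/2}(1+u)$ with $u=\sqrt{\kappa}\rho$, $v=\kappa s$ is absorbed by the exponentially decaying factors $e^{-(d-1)u/2}e^{-(d-1)^{2}v/4}$ (the degenerate case $d=1$ being trivial since there $(1+u+v)^{-1}(1+u)\leqslant 1$). Your observation that the constant can in fact be taken to depend only on $d$, rather than on $d$ and $r$, is a small but valid strengthening; the $r$ in the paper's $C_{d,r}$ is superfluous.
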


\subsubsection{\label{subsec:VaradUp}Varadhan's asymptotics: upper bound}

Let $\varepsilon>0$ be also given fixed and denote $E\triangleq{\cal P}(\Sigma_{r+\varepsilon})$.
Exactly the same argument as in Section \ref{subsec:LDP} shows that
the family of probability measures
\[
\bar{\mu}_{t}^{(\rho,\sigma)}(\Gamma)\triangleq\mathbb{E}_{(\rho,\sigma)}\big[L_{\beta(t)-\delta}^{t}\in\Gamma\big|W^{t}([0,\beta(t)-\delta])\subseteq\Sigma_{r+\varepsilon}\big],\ \ \ \Gamma\in{\cal B}(E)
\]
satisfy the same LDP with good rate function $\Lambda^{*}$ defined
by (\ref{eq:Lamb*}) (with $R=r+\varepsilon$) uniformly with respect
to $(\rho,\sigma)\in\Sigma_{r}.$ The following result is the upper
bound counterpart of Lemma \ref{lem:VaradLow}.
\begin{lem}
\label{lem:VaradUp}One has 
\begin{align}
\underset{t\rightarrow\infty}{\overline{\lim}}\frac{1}{\beta(t)}\sup_{\rho\leqslant r}\log & \mathbb{E}_{\rho,\sigma}\big[e^{-(\beta(t)-\delta)J_{t}(L_{\beta(t)-\delta}^{t})};W^{t}([0,\beta(t)-\delta])\subseteq\Sigma_{r+\varepsilon}\big]\leqslant-\chi_{r+\varepsilon}.\label{eq:VaradUp}
\end{align}
\end{lem}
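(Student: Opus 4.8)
The strategy mirrors the lower-bound argument in Lemma \ref{lem:VaradLow}, but now we exploit the upper bound in the LDP (\ref{eq:LocalLDP}) together with Varadhan's lemma in its upper form. First I would condition on the survival event and split
\[
\frac{1}{\beta(t)}\log\mathbb{E}_{\rho,\sigma}\big[e^{-(\beta(t)-\delta)J_{t}(L_{\beta(t)-\delta}^{t})};W^{t}([0,\beta(t)-\delta])\subseteq\Sigma_{r+\varepsilon}\big]
\]
into the conditional expectation against $\bar{\mu}_{t}^{(\rho,\sigma)}$ plus the logarithmic survival probability $\frac{1}{\beta(t)}\log\mathbb{P}_{\rho,\sigma}(W^{t}([0,\beta(t)-\delta])\subseteq\Sigma_{r+\varepsilon})$. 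By Lemma \ref{lem:ConvLogMGF} (with $f=0$ and $R=r+\varepsilon$) the latter converges to $-\lambda_{0}^{{\rm eu};r+\varepsilon}$ uniformly for $\rho\leqslant r$. For the conditional expectation, since $\beta(t)-\delta\sim\beta(t)$ and $J_{t}\to J$ uniformly on $\mathcal{P}(\Sigma_{r+\varepsilon})$ by Lemma \ref{lem:JtConv}, and $J$ is lower semicontinuous (indeed continuous) and nonnegative, the standard upper-bound half of Varadhan's lemma applied to the uniform LDP with rate function $\Lambda^{*}$ yields
\[
\underset{t\rightarrow\infty}{\overline{\lim}}\,\frac{1}{\beta(t)}\sup_{\rho\leqslant r}\log\int_{E}e^{-\beta(t)J_{t}(\alpha)}\,d\bar{\mu}_{t}^{(\rho,\sigma)}(\alpha)\leqslant-\inf_{\alpha\in E}\big\{J(\alpha)+\Lambda^{*}(\alpha)\big\}.
\]
The only subtlety is that $J_{t}$ depends on $t$; this is handled exactly as in Lemma \ref{lem:VaradLow}, covering $E$ by small balls $B(\mu,\eta)$, bounding $J_{t}\geqslant\inf_{B(\mu,\eta)}J_{t}$ on each, using the LDP upper bound $\overline{\lim}\frac{1}{\beta(t)}\log\sup_{\rho\leqslant r}\bar{\mu}_{t}^{(\rho,\sigma)}(\overline{B(\mu,\eta)})\leqslant-\inf_{\overline{B(\mu,\eta)}}\Lambda^{*}$, a finite subcover by compactness of $E$, and then letting $\eta\downarrow0$ with the continuity of $J$.

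\textbf{Assembling the bound.} Adding the two contributions gives
\[
\underset{t\rightarrow\infty}{\overline{\lim}}\,\frac{1}{\beta(t)}\sup_{\rho\leqslant r}\log\mathbb{E}_{\rho,\sigma}\big[e^{-(\beta(t)-\delta)J_{t}(L_{\beta(t)-\delta}^{t})};W^{t}([0,\beta(t)-\delta])\subseteq\Sigma_{r+\varepsilon}\big]\leqslant-\inf_{\alpha\in\mathcal{P}(\Sigma_{r+\varepsilon})}\big\{J(\alpha)+\Lambda^{*}(\alpha)\big\}-\lambda_{0}^{{\rm eu};r+\varepsilon}.
\]
Recalling from (\ref{eq:Lamb*}) that $\Lambda^{*}(\alpha)=\sup_{f\in C_{b}(\Sigma_{r+\varepsilon})}\{\int_{\Sigma_{r+\varepsilon}}f\,d\alpha+\lambda_{0}^{{\rm eu};f,r+\varepsilon}\}-\lambda_{0}^{{\rm eu};r+\varepsilon}$, the constant $\lambda_{0}^{{\rm eu};r+\varepsilon}$ cancels and the right-hand side becomes precisely $-\chi_{r+\varepsilon}$ as defined in (\ref{eq:ChiR}) (with $R=r+\varepsilon$). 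This is exactly (\ref{eq:VaradUp}).

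\textbf{Main obstacle.} The delicate point is the upper half of Varadhan's lemma in the presence of the $t$-dependent functional $J_t$ and, more importantly, the \emph{uniformity} over the starting point $\rho\leqslant r$: one needs the LDP upper bound in (\ref{eq:LocalLDP}) to hold with $\sup_{\rho\leqslant R-\varepsilon}\mu_{t}^{(\rho,\sigma)}$ — which is exactly why Proposition \ref{prop:LocalLDP} was stated in uniform form, selecting maximising starting points $(\rho^{t},\sigma^{t})$. Since $J$ is bounded and continuous on the compact set $\mathcal{P}(\Sigma_{r+\varepsilon})$, no tail/truncation estimate is needed and the "$\limsup$ of log of an integral" is controlled cleanly by the finite-subcover argument; the exponential tightness required by Varadhan is automatic from compactness of $E$. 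One should also check that $\beta(t)-\delta$ versus $\beta(t)$ introduces no loss: since $\frac{\beta(t)-\delta}{\beta(t)}\to1$ and the relevant exponents are $O(\beta(t))$, the discrepancy is $o(\beta(t))$ and disappears in the limit. With these points in place the proof is routine.
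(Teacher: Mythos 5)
Your proposal is correct and follows essentially the same route as the paper: decompose the log into the conditional expectation against $\bar{\mu}_{t}^{(\rho,\sigma)}$ plus the survival probability, send the latter to $-\lambda_0^{\mathrm{eu};r+\varepsilon}$ via Lemma \ref{lem:ConvLogMGF}, run the standard upper Varadhan argument (finite cover, uniform LDP upper bound, uniform convergence $J_t\to J$) for the former, and cancel $\lambda_0^{\mathrm{eu};r+\varepsilon}$ against the last term in $\Lambda^*$ from (\ref{eq:Lamb*}). The only minor technical difference is that the paper covers the level set $K_L=\{\Lambda^*\leqslant L\}$ by balls whose radii $\rho_\alpha$ are chosen so that both $-J$ and $\Lambda^*$ are controlled within $\eta$, then bounds the complement $G^c$ by $-L$ and lets $L\uparrow\infty$; you instead invoke compactness of the full space $E=\mathcal{P}(\Sigma_{r+\varepsilon})$ and rely on uniform continuity of $J$ on $E$ when shrinking the radius -- both are valid here since $E$ is compact, and the resulting bounds coincide.
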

\begin{proof}
Let $L,\eta>0$ be given. Define $K_{L}\triangleq\{\alpha\in E:\Lambda^{*}(\alpha)\leqslant L\}.$
Note that $K_{L}$ is a compact subset of $E.$ For each $\alpha\in K_{L}$,
there exists $\rho_{\alpha}>0$ such that 
\[
\sup_{\bar{B}(\alpha,\rho_{\alpha})}(-J)\leqslant(-J)(\alpha)+\eta,\ \inf_{\bar{B}(\alpha,\rho_{\alpha})}\Lambda^{*}\geqslant\Lambda^{*}(\alpha)-\eta.
\]
By the compactness of $K_{L}$, one can cover it by finitely many
such balls, say $\bar{B}(\alpha_{m},\rho_{m})$ ($i=1,\cdots,N$).
Setting $G\triangleq\cup_{m=1}^{N}\bar{B}(\alpha_{m},\rho_{m})$,
one has 
\begin{align*}
 & \mathbb{E}_{\rho,\sigma}\big[e^{-(\beta(t)-\delta)J_{t}(L_{\beta(t)-\delta}^{t})}\big|W^{t}([0,\beta(t)-\delta])\subseteq\Sigma_{r+\varepsilon}\big]\\
 & =\int_{E}e^{-(\beta(t)-\delta)J_{t}(\alpha)}\bar{\mu}_{t}^{(\rho,\sigma)}(d\alpha)=\big(\int_{G}+\int_{G^{c}}\big)e^{-(\beta(t)-\delta)J_{t}(\alpha)}\bar{\mu}_{t}^{(\rho,\sigma)}(d\alpha)\\
 & \leqslant\sum_{m=1}^{N}\int_{\bar{B}(\alpha_{m},\rho_{m})}e^{-(\beta(t)-\delta)J_{t}(\alpha)}\bar{\mu}_{t}^{(\rho,\sigma)}(d\alpha)+\bar{\mu}_{t}^{(\rho,\sigma)}(G^{c}).
\end{align*}
It follows that 
\begin{align*}
 & \frac{1}{\beta(t)}\log\sup_{\rho\leqslant r}{\rm L.H.S.}\\
 & \leqslant\frac{1}{\beta(t)}\log(N+1)+\max\big\{\frac{\beta(t)-\delta}{\beta(t)}\sup_{\bar{B}(\alpha_{m},\rho_{m})}(-J_{t})\\
 & \ \ \ +\frac{1}{\beta(t)}\log\sup_{\rho\leqslant r}\bar{\mu}_{t}^{(\rho,\sigma)}(\bar{B}(\alpha_{m},\rho_{m})):1\leqslant m\leqslant N\big\}\vee\big(\frac{1}{\beta(t)}\log\sup_{\rho\leqslant r}\bar{\mu}_{t}^{(\rho,\sigma)}(G^{c})\big).
\end{align*}

We now analyse the behaviour of the right hand side as $t\rightarrow\infty$.
It is seen from Lemma \ref{lem:JtConv} and the uniform LDP that 
\[
\frac{1}{\beta(t)}\log(N+1)\rightarrow0,\ \frac{\beta(t)-\delta}{\beta(t)}\sup_{\bar{B}(\alpha_{m},\rho_{m})}(-J_{t})\rightarrow\sup_{\bar{B}(\alpha_{m},\rho_{m})}(-J)\leqslant(-J)(\alpha_{m})+\eta,
\]
\[
\underset{t\rightarrow\infty}{\overline{\lim}}\frac{1}{\beta(t)}\log\sup_{\rho\leqslant r}\bar{\mu}_{t}^{(\rho,\sigma)}(\bar{B}(\alpha_{m},\rho_{m}))\leqslant-\inf_{\bar{B}(\alpha_{m},\rho_{m})}\Lambda^{*}\leqslant-\Lambda^{*}(\alpha_{m})+\eta,
\]
\[
\underset{t\rightarrow\infty}{\overline{\lim}}\frac{1}{\beta(t)}\log\sup_{\rho\leqslant r}\bar{\mu}_{t}^{(\rho,\sigma)}(G^{c})\leqslant-\inf_{G^{c}}\Lambda^{*}\leqslant-L\ \ \ (\text{since }K_{L}\subseteq G).
\]
As a consequence, one obtains that
\begin{align*}
 & \underset{t\rightarrow\infty}{\overline{\lim}}\frac{1}{\beta(t)}\log\sup_{\rho\leqslant r}\mathbb{E}_{\rho,\sigma}\big[e^{-(\beta(t)-\delta)J_{t}(L_{\beta(t)-\delta}^{t})}\big|W^{t}([0,\beta(t)-\delta])\subseteq\Sigma_{r+\varepsilon}\big]\\
 & \leqslant\max\big\{2\eta+(-J)(\alpha_{m})-\Lambda^{*}(\alpha_{m}):1\leqslant m\leqslant N\big\}\vee(-L)\\
 & \leqslant\big(2\eta+\sup_{\mu\in E}(-J(\mu)-\Lambda^{*}(\mu))\big)\vee(-L).
\end{align*}
Since $\eta$ and $L$ are arbitrary, it follows that 
\begin{align*}
 & \underset{t\rightarrow\infty}{\overline{\lim}}\frac{1}{\beta(t)}\log\sup_{\rho\leqslant r}\mathbb{E}_{\rho,\sigma}\big[e^{-(\beta(t)-\delta)J_{t}(L_{\beta(t)-\delta}^{t})}\big|W^{t}([0,\beta(t)-\delta])\subseteq\Sigma_{r+\varepsilon}\big]\\
 & \leqslant-\inf_{\mu\in E}\big(J(\mu)+\Lambda^{*}(\mu)\big).
\end{align*}
The desired estimate now follows easily from (\ref{eq:Lamb*}) as
well as Lemma \ref{lem:VaradLow}.
\end{proof}

\subsubsection{Completing the proof of Lemma \ref{lem:DomainUpper}}

We now proceed to upper bound the right hand side of (\ref{eq:Up3})
(call that expression $I_{t}$). Let $\varepsilon>0$ be given as
before. By using the heat kernel bound (\ref{eq:HKUp}), one finds
that 
\begin{align*}
I_{t} & \leqslant C_{d,r}\delta^{-d/2}\int_{\Sigma_{r}}\mathbb{E}_{x}\big[e^{-(\beta(t)-\delta)J_{t}(L_{\beta(t)-\delta}^{t})};W^{t}([0,\beta(t)-\delta])\subseteq\Sigma_{r+\varepsilon}\big]d^{t}x\\
 & \leqslant C_{d,r}\delta^{-d/2}{\rm vol}^{t}(\Sigma_{r})\sup_{\rho\leqslant r}\mathbb{E}_{x}\big[e^{-(\beta(t)-\delta)J_{t}(L_{\beta(t)-\delta}^{t})};W^{t}([0,\beta(t)-\delta])\subseteq\Sigma_{r+\varepsilon}\big],
\end{align*}
where ${\rm vol}^{t}$ denotes the volume measure with respect to
the metric $g^{t}$. Note that ${\rm vol}^{t}(\Sigma_{r})$ converges
to the Euclidean volume of the $r$-ball as $t\rightarrow\infty.$
Therefore, one concludes from Lemma \ref{lem:VaradUp} that 
\[
\underset{t\rightarrow\infty}{\overline{\lim}}\frac{1}{\beta(t)}\log I_{t}\leqslant-\chi_{r+\varepsilon}.
\]
The desired estimate (\ref{eq:DomainUpper}) follows by taking $\varepsilon\searrow0.$

\subsection{\label{subsec:HypPart} An explicit decomposition of large geodesic
balls}

After localisation on a (large $t$-dependent) geodesic ball as in
Section \ref{subsec:Localisation}, an important step in the argument
is to decompose the ball into subdomains of fixed ($t$-independent)
volume. After that, one can apply the estimate developed in Section
\ref{subsec:FixDom} to each of these subdomains. 

To develop this step precisely, let $r<\tilde{R}(t)$ be given numbers.
Here we consider $r$ fixed and $\tilde{R}(t)\uparrow\infty$ as $t\rightarrow\infty$
(we will properly define $\tilde{R}(t)$ later on but its value is
of no importance at this stage). Let $Q_{\tilde{R}(t)}^{t}$ denote
the closed geodesic ball centered at $o$ with radius $\tilde{R}(t)$
on the hyperboloid model $\mathbb{H}_{\alpha(t)}^{d}$. We shall decompose
$Q_{\tilde{R}(t)}^{t}$ into subdomains whose volumes are all equal
to ${\rm vol}^{t}(Q_{r}^{t})$. The underlying idea is very simple:
one first partitions $Q_{\tilde{R}(t)}^{t}$ into concentric annuli
with hyperbolic width $r$ and on each annulus one further decomposes
the angular component (the unit sphere $\mathbb{S}^{d-1}$) into congruent
balls. The radius of these balls are determined by the volume matching
condition. 

\subsubsection{\label{subsec:SpherePack}Sphere packing and covering}

We first spend some time discussing the decomposition / economic covering
of the angular component $\mathbb{S}^{d-1}$. If $d=2$, this is straight
forward; one just evenly partition the unit circle into arcs of fixed
length. Some technical care is needed in higher dimensions. For our
purpose, one does not really need a strict partition of $\mathbb{S}^{d-1}$
(which is geometrically cumbersome to describe). Instead, an economic
covering by identical balls would suffice. Here we develop some basic
lemmas that will be needed for our later discussion. We use $B(\sigma,\theta)$
to denote the open ball of radius $\theta$ centered at $\sigma$
on the sphere $\mathbb{S}^{d-1}$. We also denote $\omega_{d-1}\triangleq{\rm vol}(\mathbb{S}^{d-1})$.

Let $\theta\in(0,\pi)$ be a given fixed number. Let ${\cal C}(\theta)=\{B(\sigma_{i},\theta/2):i\in{\cal I}\}$
denote a \textit{maximal sphere packing} of $\mathbb{S}^{d-1}$. In
other words, this is a maximal way of fitting disjoint balls of radius
$\theta/2$ into $\mathbb{S}^{d-1}$. Clearly, such a packing always
exists but needs not be unique. We fix such a choice. The total number
$N(\theta)$ of members in ${\cal C}(\theta)$ admits the following
obvious bound:
\begin{equation}
N(\theta)\leqslant\frac{\omega_{d-1}}{{\rm vol}(B(\sigma,\theta/2))}\leqslant\frac{C_{d}}{\theta^{d-1}}\ \ \ \forall\theta\in(0,\pi)\label{eq:NumSigBall}
\end{equation}
with some universal constant $C_{d}$ depending only on $d$. 
\begin{lem}
\label{lem:ThetaCov}The family ${\cal U}(\theta)\triangleq\{B(\sigma_{i},\theta):i\in{\cal I}\}$
is a cover of $\mathbb{S}^{d-1}.$ 
\end{lem}
\begin{proof}
Suppose on the contrary that there exists some $\sigma\in\mathbb{S}^{d-1}$
not being covered by the balls $B(\sigma_{i},\theta)$. Then the ball
$B(\sigma,\theta/2)$ does not intersect $B(\sigma_{i},\theta/2)$
for all $i$ and can thus be added into the family ${\cal C}(\theta)$
to form a larger packing of $\mathbb{S}^{d-1}$. This contradicts
the maximality of ${\cal C}(\theta)$.
\end{proof}
\begin{lem}
\label{lem:2ThetaCov}Consider the cover ${\cal V}(\theta)\triangleq\{B(\sigma_{i},2\theta):i\in{\cal I}\}$.
Then there exists a universal constant $D_{d}$ depending only on
$d$, such that any point $\sigma\in\mathbb{S}^{d-1}$ is covered
by at most $D_{d}$ times. 
\end{lem}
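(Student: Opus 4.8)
The plan is to fix an arbitrary point $\sigma\in\mathbb{S}^{d-1}$ and bound the number of members of ${\cal V}(\theta)$ that contain it; this number equals the cardinality of $I_\sigma\triangleq\{i\in{\cal I}:\sigma\in B(\sigma_i,2\theta)\}$, so it suffices to show $|I_\sigma|\le D_d$ with $D_d$ depending only on $d$. This is the classical packing-versus-volume argument. First I would note that for $i\in I_\sigma$ the centre $\sigma_i$ lies within spherical distance $2\theta$ of $\sigma$, so by the triangle inequality every point of the packing ball $B(\sigma_i,\theta/2)$ lies within distance $2\theta+\theta/2=\tfrac52\theta$ of $\sigma$; hence
\[
\bigcup_{i\in I_\sigma}B\big(\sigma_i,\tfrac{\theta}{2}\big)\subseteq B\big(\sigma,\tfrac52\theta\big),
\]
and the union on the left is disjoint because ${\cal C}(\theta)$ is a packing. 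Since $\mathbb{S}^{d-1}$ is homogeneous (its isometry group acts transitively — the same fact already used for \eqref{eq:NumSigBall}), every ball $B(\sigma_i,\theta/2)$ has the same volume $v(\theta/2)>0$, and comparing volumes gives $|I_\sigma|\,v(\theta/2)\le\mathrm{vol}\big(B(\sigma,\tfrac52\theta)\big)$.

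It then remains to estimate these two volumes by comparable powers of $\theta$. Writing $\mathrm{vol}(B(\cdot,R))=\omega_{d-2}\int_0^{R\wedge\pi}\sin^{d-2}t\,dt$ with $\omega_{d-2}=\mathrm{vol}(\mathbb{S}^{d-2})$, the elementary bound $\sin t\le t$ together with $\omega_{d-1}=\mathrm{vol}(\mathbb{S}^{d-1})\le\frac{\omega_{d-2}}{d-1}\pi^{d-1}$ yields the single uniform estimate $\mathrm{vol}(B(\cdot,R))\le\frac{\omega_{d-2}}{d-1}R^{d-1}$ valid for every $R>0$ (when $R\ge\pi$ the ball is all of $\mathbb{S}^{d-1}$), so in particular $\mathrm{vol}(B(\sigma,\tfrac52\theta))\le\frac{\omega_{d-2}}{d-1}(\tfrac52\theta)^{d-1}$. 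Conversely, since $\theta\in(0,\pi)$ we have $\theta/2<\pi/2$, and on $[0,\theta/2]$ one may use $\sin t\ge\tfrac{2}{\pi}t$ to get $v(\theta/2)\ge\frac{\omega_{d-2}}{d-1}\big(\tfrac{2}{\pi}\big)^{d-2}(\theta/2)^{d-1}$. Dividing, the factors $\tfrac{\omega_{d-2}}{d-1}$ and $(\theta/2)^{d-1}$ cancel and we obtain
\[
|I_\sigma|\le\frac{(\tfrac52\theta)^{d-1}}{(\tfrac{2}{\pi})^{d-2}(\tfrac{\theta}{2})^{d-1}}=5^{d-1}\Big(\tfrac{\pi}{2}\Big)^{d-2}=:D_d,
\]
which depends only on $d$. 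As $\sigma$ was arbitrary, this proves the claim.

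I do not anticipate any real obstacle here; the only subtlety worth flagging is that $\tfrac52\theta$ may exceed $\pi$, in which case $B(\sigma,\tfrac52\theta)$ degenerates to the whole sphere — this is precisely why the volume upper bound should be stated in the $R$-uniform form above rather than only for $R\le\pi$. (One could alternatively avoid the issue by shrinking the packing radius, but the stated normalisation already works without modification.)
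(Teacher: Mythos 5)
Your argument is correct and coincides with the paper's: fix $\sigma$, observe that for $i\in I_\sigma$ the disjoint packing balls $B(\sigma_i,\theta/2)$ all sit inside $B(\sigma,\tfrac52\theta)$, and compare volumes. The only difference is cosmetic — you make the volume quotient fully explicit, whereas the paper simply cites it as bounded by a dimensional constant $D_d$.
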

\begin{proof}
Let $\sigma\in\mathbb{S}^{d-1}$ be given fixed. Define ${\cal I}_{\sigma}\triangleq\{i\in{\cal I}:\sigma\in B(\sigma_{i},2\theta)\}$.
This is equivalent to saying that $\sigma_{i}\in B(\sigma,2\theta)$
for all $i\in{\cal I}_{\sigma}$. It follows that 
\[
\bigcup_{i\in{\cal I}_{\sigma}}B\big(\sigma_{i},\frac{\theta}{2}\big)\subseteq B\big(\sigma,\frac{5\theta}{2}\big).
\]
Since this is a disjoint union, one has 
\[
|{\cal I}_{\sigma}|\leqslant\frac{{\rm vol}\big(B(\sigma,5\theta/2)\big)}{{\rm vol}\big(B(\sigma_{i},\theta/2)\big)}\leqslant D_{d}.
\]
The result thus follows.
\end{proof}

\subsubsection{Construction of decomposition and related geometric estimates}

We now proceed to construct the decomposition of the geodesic ball
$Q_{\tilde{R}(t)}^{t}$ announced at the beginning. Under the hyperboloid
model $\mathbb{H}_{\alpha(t)}^{d}$, let $(\rho,\sigma)$ denote the
geodesic polar coordinates with respect to $o$. 

\vspace{2mm}\noindent (i) {[}Radial part{]} We first divide $Q_{\tilde{R}(t)}^{t}$
into concentric annuli with hyperbolic width $r$ (the inner one is
taken to be the $r$-ball centered at $o$). In other words, 
\[
Q_{\tilde{R}(t)}^{t}=\bigcup_{k=1}^{\tilde{R}(t)/r}A_{k}^{t},\ \ \ A_{k}^{t}\triangleq\{x=(\rho,\sigma)\in\mathbb{H}_{\alpha(t)}^{d}:(k-1)r\leqslant\rho\leqslant kr\}.
\]
(ii) {[}Angular part{]} For each $k,$ consider the cover 
\begin{equation}
{\cal U}(\theta_{k}^{t})=\{B(\sigma_{i}^{k,t},\theta_{k}^{t}):i\in{\cal I}_{k}^{t}\}\label{eq:UTheCover}
\end{equation}
of $\mathbb{S}^{d-1}$ defined by Lemma \ref{lem:ThetaCov}, where
the angular radius $\theta_{k}^{t}$ is determined by Lemma \ref{lem:AngRad}
below (volume matching) and recall that ${\cal U}(\theta_{k}^{t})$
is induced from a maximal sphere packing ${\cal C}(\theta_{k}^{t})$
(cf. Section \ref{subsec:SpherePack}). For each $i\in{\cal I}_{k}^{t},$
we define 
\begin{equation}
P_{k}^{t}(i)\triangleq\{(\rho,\sigma):(k-1)r\leqslant\rho\leqslant kr,\sigma\in B(\sigma_{i}^{k,t},\theta_{k}^{t})\}.\label{eq:Pkt}
\end{equation}
One is then led to the following decomposition of $Q_{\tilde{R}(t)}^{t}$,
on which our later analysis is largely based. 
\begin{equation}
Q_{\tilde{R}(t)}^{t}=\bigcup_{k=1}^{\tilde{R}(t)/r}\bigcup_{i\in{\cal I}_{k}^{t}}P_{k}^{t}(i).\label{eq:QDecomp}
\end{equation}
\begin{figure}[H]  
\begin{center}   
\includegraphics[scale=0.35]{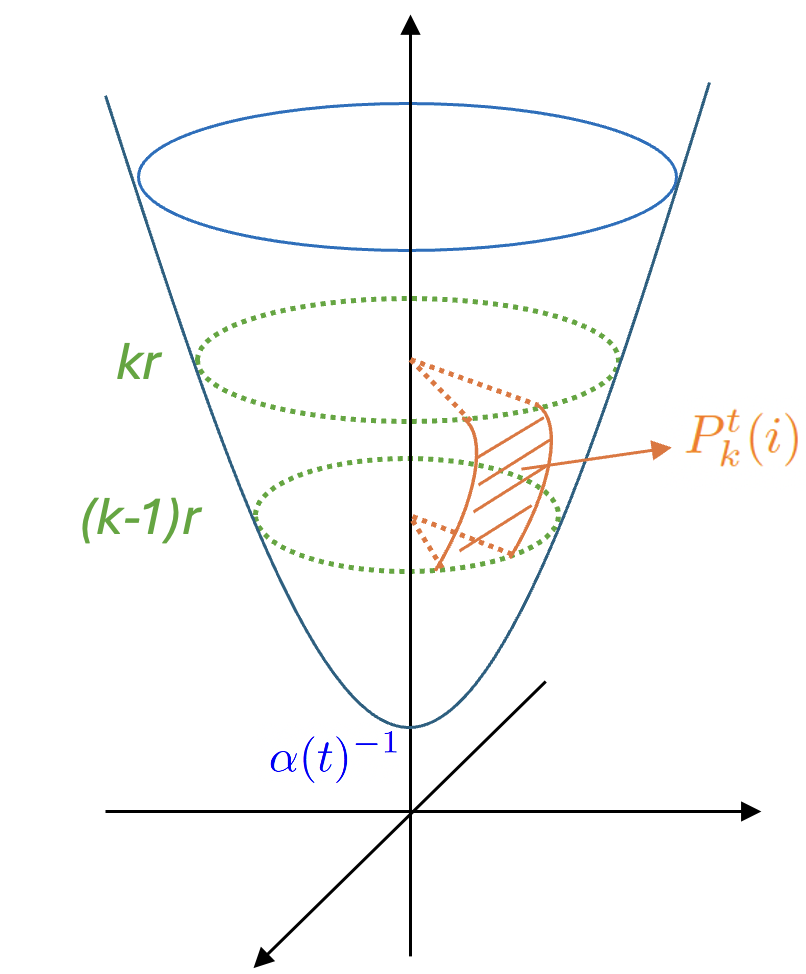}
\protect\caption{Decomposition of $Q^t_{\tilde{R}(t)}$ when $d=2$.}
\end{center} 
\end{figure}

The angular radius $\theta_{k}^{t}$ is determined by the volume matching
condition ${\rm vol}^{t}(P_{k}^{t}(i))={\rm vol}^{t}(Q_{r}^{t})$.
More precisely, one has the following lemma. We will require that
$\theta_{k}^{t}\in(0,\pi/2)$ (just take $\theta_{k}^{t}=\pi/2$ otherwise). 
\begin{lem}
\label{lem:AngRad}The number $\theta_{k}^{t}$ is determined by the
relation that 
\begin{equation}
\int_{0}^{\theta_{k}^{t}}\sin^{d-2}udu=\frac{\omega_{d-1}}{\omega_{d-2}}\times\frac{\int_{0}^{r}\sinh^{d-1}(\alpha(t)\rho)d\rho}{\int_{(k-1)r}^{kr}\sinh^{d-1}(\alpha(t)\rho)d\rho}.\label{eq:AngForm}
\end{equation}
It admits the following estimate: 
\begin{equation}
C_{1,d}\times\frac{\sinh(\alpha(t)r/2)}{\sinh(k\alpha(t)r)}\leqslant\theta_{k}^{t}\leqslant C_{2,d}\times\frac{\sinh(\alpha(t)r)}{\sinh((k-1)\alpha(t)r)}\ \ \ \forall k,t.\label{eq:EstAng}
\end{equation}
In addition, the total number $N_{r}^{t}$ of subdomains $P_{k}^{t}(i)$
are estimated as 
\begin{equation}
N_{r}^{t}\leqslant C_{3,d}\times\frac{\tilde{R}(t)}{r}\times\big(\frac{\sinh(\tilde{R}(t)\alpha(t))}{\sinh(\alpha(t)r/2)}\big)^{d-1}.\label{eq:NumEst}
\end{equation}
Here $C_{i,d}$ ($i=1,2,3$) are universal constants that depend only
on $d.$
\end{lem}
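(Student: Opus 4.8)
The plan is to read off the identity \eqref{eq:AngForm} from the volume form in geodesic polar coordinates, and then to obtain \eqref{eq:EstAng} and \eqref{eq:NumEst} by bounding the incomplete integrals $\int_0^\theta\sin^{d-2}u\,du$ and $\int_0^r\sinh^{d-1}(\alpha(t)\rho)\,d\rho$ against elementary monotone comparisons.

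\emph{Step 1: the defining relation.} Using the volume form \eqref{eq:VolAlp} on $\mathbb{H}^d_{\alpha(t)}$ together with the fact that a geodesic ball of radius $\theta$ on $\mathbb{S}^{d-1}$ has volume $\omega_{d-2}\int_0^\theta\sin^{d-2}u\,du$ (geodesic polar coordinates on the sphere), one computes from \eqref{eq:Pkt}
\[
{\rm vol}^t\big(P_k^t(i)\big)=\alpha(t)^{-(d-1)}\Big(\int_{(k-1)r}^{kr}\sinh^{d-1}(\alpha(t)\rho)\,d\rho\Big)\,\omega_{d-2}\int_0^{\theta_k^t}\sin^{d-2}u\,du ,
\]
whereas ${\rm vol}^t(Q_r^t)=\alpha(t)^{-(d-1)}\,\omega_{d-1}\int_0^r\sinh^{d-1}(\alpha(t)\rho)\,d\rho$. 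Equating these and cancelling the factor $\alpha(t)^{-(d-1)}$ gives \eqref{eq:AngForm}, at least whenever the resulting angle does not exceed $\pi/2$; when it does, $\theta_k^t$ is set to $\pi/2$ as stipulated.

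\emph{Step 2: the angular estimate.} The elementary input is that, from $\tfrac{2}{\pi}u\leqslant\sin u\leqslant u$ on $[0,\pi/2]$, the map $\theta\mapsto\int_0^\theta\sin^{d-2}u\,du$ is comparable to $\theta^{d-1}$ uniformly for $\theta\in(0,\pi/2]$, with constants depending only on $d$. Writing $R_k^t$ for the ratio of integrals appearing on the right of \eqref{eq:AngForm}, monotonicity of $\sinh$ shows $R_k^t\in(0,1]$ (it equals $1$ for $k=1$ and $\int_{(k-1)r}^{kr}\geqslant\int_0^r$ for $k\geqslant2$), and the truncation is active precisely when $R_k^t$ is bounded away from $0$, in which case $\theta_k^t=\pi/2$; combining the truncated and non-truncated cases yields $\theta_k^t\asymp (R_k^t)^{1/(d-1)}$ with $d$-dependent constants. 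It then remains to estimate $R_k^t$ by monotonicity of $\sinh$:
\[
\tfrac{r}{2}\sinh^{d-1}\!\big(\tfrac{\alpha(t)r}{2}\big)\leqslant \int_0^r\sinh^{d-1}(\alpha(t)\rho)\,d\rho\leqslant r\sinh^{d-1}(\alpha(t)r),
\]
together with $r\sinh^{d-1}((k-1)\alpha(t)r)\leqslant\int_{(k-1)r}^{kr}\sinh^{d-1}(\alpha(t)\rho)\,d\rho\leqslant r\sinh^{d-1}(k\alpha(t)r)$. Dividing these and taking $(d-1)$-th roots yields \eqref{eq:EstAng}, the $k=1$ instance of the upper bound being vacuous.

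\emph{Step 3: the counting bound.} The $k$-th annulus $A_k^t$ contributes $N(\theta_k^t)$ subdomains, and $N(\theta_k^t)\leqslant C_d\,(\theta_k^t)^{-(d-1)}$ by \eqref{eq:NumSigBall}. Inserting the lower bound of \eqref{eq:EstAng} and using $k\alpha(t)r\leqslant\tilde R(t)\alpha(t)$ gives $N(\theta_k^t)\leqslant C_d'\big(\sinh(\tilde R(t)\alpha(t))/\sinh(\alpha(t)r/2)\big)^{d-1}$ for every $k$; summing over the $\tilde R(t)/r$ annuli yields \eqref{eq:NumEst}. The whole argument is elementary; the only point requiring any care is the bookkeeping of constants near $k=1$ and near the truncation $\theta_k^t=\pi/2$, which is handled by the observation above that there $R_k^t$ and $\theta_k^t$ are both bounded away from $0$ and $\infty$.
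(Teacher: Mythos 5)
Your argument is correct and matches the paper's proof in all essentials: the identity \eqref{eq:AngForm} is read off from the volume form, and \eqref{eq:EstAng}, \eqref{eq:NumEst} follow from the comparison $\tfrac{2}{\pi}u\leqslant\sin u\leqslant u$ on $[0,\pi/2]$, monotonicity of $\sinh$, and the packing bound \eqref{eq:NumSigBall}. The paper states these inputs without spelling out the routine details; you have simply filled them in, including the correct handling of the $k=1$ and truncated ($\theta_k^t=\pi/2$) cases.
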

\begin{proof}
Recall that the volume of a ball of radius $\theta$ on $\mathbb{S}^{d-1}$
is given by 
\[
\omega_{d-2}\times\int_{0}^{\theta}\sin^{d-2}udu.
\]
According to the definition (\ref{eq:Pkt}) of $P_{k}^{t}(i)$ as
well as the expression (\ref{eq:VolAlp}) of the volume form on $\mathbb{H}_{\alpha(t)}^{d}$,
one has 
\begin{align*}
{\rm vol}^{t}(P_{k}^{t}(i)) & =\omega_{d-2}\times\int_{0}^{\theta_{k}^{t}}\sin^{d-2}udu\times\int_{(k-1)r}^{kr}\alpha(t)^{-(d-1)}\sinh^{d-1}(\alpha(t)\rho)d\rho.
\end{align*}
In the same way, one also has 
\begin{align*}
{\rm vol}^{t}(Q_{r}^{t}) & =\omega_{d-1}\times\int_{0}^{r}\alpha(t)^{-(d-1)}\sinh^{d-1}(\alpha(t)\rho)d\rho
\end{align*}
The relation (\ref{eq:AngForm}) follows by equating the above two
expressions. The estimate (\ref{eq:EstAng}) follows from the elementary
inequality 
\[
\frac{2u}{\pi}\leqslant\sin u\leqslant u\ \ \ \forall u\in\big[0,\frac{\pi}{2}\big]
\]
as well as the monotonicity of $x\mapsto\sinh x.$ The estimate (\ref{eq:NumEst})
follows from (\ref{eq:NumSigBall}) as well as (\ref{eq:EstAng}).
\end{proof}
The following lemma plays a crucial role in our analysis. It says
that each $P_{k}^{t}(i)$ can be fitted into some geodesic ball of
radius $\propto r$. We use ${\rm Diam}P$ to denote the diameter
of a subset $P$ in $\mathbb{H}_{\alpha(t)}^{d}$.
\begin{lem}
\label{lem:DiamEst}There exist a universal constant $C_{4,d}$ depending
only on $d$ and a constant $T_{r}$ depending additionally on $r$,
such that 
\begin{equation}
{\rm Diam}(P_{k}^{t}(i))\leqslant C_{4,d}r\label{eq:DiamEst}
\end{equation}
for all $t>T_{r}$ and all subdomains $P_{k}^{t}(i)$ in the decomposition
(\ref{eq:QDecomp}). In particular, each $P_{k}^{t}(i)$ is contained
in some geodesic ball of radius $C_{4,d}r$. 
\end{lem}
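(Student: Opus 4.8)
The plan is to estimate the $\mathbb{H}_{\alpha(t)}^{d}$-distance between two arbitrary points of $P_{k}^{t}(i)$ directly via the hyperbolic law of cosines, and then feed in the angular-radius bound (\ref{eq:EstAng}). Write $\alpha=\alpha(t)$. Using the explicit embedding of a point with geodesic polar coordinates $(\rho,\sigma)$ as $\big(\alpha^{-1}\sinh(\alpha\rho)\,\sigma,\ \alpha^{-1}\cosh(\alpha\rho)\big)\in\mathbb{R}^{d+1}$ together with the distance formula (\ref{eq:HypDist}), one obtains for $x=(\rho_{1},\sigma_{1})$, $y=(\rho_{2},\sigma_{2})$, with $\gamma\in[0,\pi]$ the angle at $o$ (so $\cos\gamma=\langle\sigma_{1},\sigma_{2}\rangle$),
\begin{equation*}
\cosh\big(\alpha\, d(x,y)\big)=\cosh\big(\alpha(\rho_{1}-\rho_{2})\big)+\sinh(\alpha\rho_{1})\sinh(\alpha\rho_{2})\,(1-\cos\gamma).
\end{equation*}
For $x,y\in P_{k}^{t}(i)$ we have $|\rho_{1}-\rho_{2}|\leqslant r$ and $\sinh(\alpha\rho_{i})\leqslant\sinh(\alpha k r)$, while the spherical triangle inequality gives $\gamma\leqslant 2\theta_{k}^{t}\leqslant\pi$, hence $1-\cos\gamma=2\sin^{2}(\gamma/2)\leqslant 2(\theta_{k}^{t})^{2}$. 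Therefore
\begin{equation*}
\cosh\big(\alpha\, d(x,y)\big)\leqslant \cosh(\alpha r)+2\sinh^{2}(\alpha k r)\,(\theta_{k}^{t})^{2}.
\end{equation*}

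The next step is to absorb the factor $\sinh^{2}(\alpha k r)(\theta_{k}^{t})^{2}$. For $k\geqslant 2$, the addition formula for $\sinh$ together with $\coth((k-1)\alpha r)\leqslant\coth(\alpha r)$ gives $\sinh(\alpha k r)\leqslant 2\cosh(\alpha r)\sinh((k-1)\alpha r)$; combined with the upper bound $\theta_{k}^{t}\leqslant C_{2,d}\sinh(\alpha r)/\sinh((k-1)\alpha r)$ from (\ref{eq:EstAng}), the $\sinh((k-1)\alpha r)$ factors cancel and one finds $\sinh^{2}(\alpha k r)(\theta_{k}^{t})^{2}\leqslant 4C_{2,d}^{2}\cosh^{2}(\alpha r)\sinh^{2}(\alpha r)$. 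For $k=1$ one simply uses $\theta_{1}^{t}\leqslant\pi/2$ in the previous display. In either case one arrives at $\cosh(\alpha\, d(x,y))-1\leqslant \big(\cosh(\alpha r)-1\big)+C_{d}'\cosh^{2}(\alpha r)\sinh^{2}(\alpha r)$ for a constant $C_{d}'$ depending only on $d$.

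Finally I would pick $T_{r}$ so that $\alpha(t)r\leqslant 1$ whenever $t>T_{r}$, which is possible since $\alpha(t)=t^{-1/4}\to 0$. On this range $\cosh(\alpha r)\leqslant\cosh 1$, $\cosh(\alpha r)-1\leqslant(\alpha r)^{2}$ and $\sinh(\alpha r)\leqslant(\sinh 1)\,\alpha r$, so the bound above becomes $\cosh(\alpha\, d(x,y))-1\leqslant C_{d}''(\alpha r)^{2}$ with $C_{d}''$ depending only on $d$. Since $\cosh u-1\geqslant u^{2}/2$, this yields $d(x,y)\leqslant C_{4,d}\,r$ with $C_{4,d}:=\sqrt{2C_{d}''}$, uniformly in $k$, $i$ and in $t>T_{r}$. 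Taking the supremum over $x,y\in P_{k}^{t}(i)$ gives (\ref{eq:DiamEst}), and the last assertion follows since a set of diameter at most $C_{4,d}r$ lies in the closed geodesic ball of radius $C_{4,d}r$ centered at any of its points.

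I do not expect a genuine obstacle: this is a bounded computation. The only delicate point is the bookkeeping of the hyperbolic trigonometric factors so that both the $t$-dependence (through $\alpha(t)$) and the annulus index $k$ disappear uniformly. The key cancellation is that the angular radius $\theta_{k}^{t}$ in (\ref{eq:EstAng}) decays like $1/\sinh((k-1)\alpha r)$, precisely matching the radial scale $\sinh(\alpha k r)\asymp\sinh((k-1)\alpha r)$ of the annulus $A_{k}^{t}$, so that the true angular width of $P_{k}^{t}(i)$ stays of order $r$ no matter how far out the annulus sits.
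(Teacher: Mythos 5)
Your proof is correct and follows essentially the same strategy as the paper: bound the angular separation by $2\theta_k^t$, apply the hyperboloid-model distance formula, and use the key cancellation between $\sinh(k\alpha r)$ and the bound $\theta_k^t\lesssim\sinh(\alpha r)/\sinh((k-1)\alpha r)$ from (\ref{eq:EstAng}). The only cosmetic difference is that you compute $d(x,y)$ directly via the hyperbolic law of cosines, whereas the paper first projects both points radially out to the level set $\{d^t(\cdot,o)=kr\}$ (paying an additive $2r$) to reduce to a same-radius comparison; both routes lead to the same uniform bound once $\alpha(t)r\leqslant 1$.
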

\begin{proof}
Let $A,B\in P_{k}^{t}(i)$. Denote $A',B'$ as the corresponding points
on the level set $\{x:d^{t}(x,o)=kr\}$ that have the same angular
component as $A,B$ respectively. It is clear that 
\begin{equation}
d^{t}(A,B)\leqslant2r+d^{t}(A',B').\label{eq:ABvsA'B'}
\end{equation}
By the distance formula (\ref{eq:HypDist}) on the hyperboloid $\mathbb{H}_{\alpha(t)}^{d}$,
one has 
\begin{equation}
A'*B'=-\alpha(t)^{-2}\cosh\big(\alpha(t)d^{t}(A',B')\big),\label{eq:A*B1}
\end{equation}
where $*$ denotes the standard Lorentzian inner product in $\mathbb{R}^{d+1}.$
Explicit calculation shows that 
\begin{equation}
A'*B'=\alpha(t)^{-2}\big(\sinh^{2}(k\alpha(t)r)\cos\angle_{{\rm \sigma}}A'B'-\cosh^{2}(k\alpha(t)r)\big),\label{eq:A*B2}
\end{equation}
where $\angle_{\sigma}A'B'$ denotes the angle between the angular
components $A'_{\sigma},B'_{\sigma}$ of $A',B'$ (both are unit vectors
on $\mathbb{S}^{d-1}$). Since $A'_{\sigma},B'_{\sigma}$ are contained
in the same ball of radius $\theta_{k}^{t}$ on $\mathbb{S}^{d-1}$,
one has $\angle_{\sigma}A'B'\leqslant2\theta_{k}^{t}$. By equating
(\ref{eq:A*B1}) and (\ref{eq:A*B2}), one finds that
\begin{align}
\cosh\big(\alpha(t)d^{t}(A',B')\big)-1 & =(1-\cos\angle_{{\rm \sigma}}A'B')\sinh^{2}(k\alpha(t)r)\nonumber \\
 & =\frac{1}{2}\sin^{2}\angle_{{\rm \sigma}}A'B'\sinh^{2}(k\alpha(t)r)\nonumber \\
 & \leqslant\frac{1}{2}(2\theta_{k}^{t})^{2}\sinh^{2}(k\alpha(t)r)\nonumber \\
 & \leqslant2C_{2,d}^{2}\times\sinh^{2}\alpha(t)r\times\big(\frac{\sinh(k\alpha(t)r)}{\sinh((k-1)\alpha(t)r)}\big)^{2},\label{eq:DiamEstPf}
\end{align}
where the last inequality follows from the estimate (\ref{eq:EstAng}).

We now estimate the above quotient. Denoting $x\triangleq\alpha(t)r$
and assuming $k\geqslant2$, it is elementary to see that 
\[
\frac{\sinh kx}{\sinh(k-1)x}=\frac{1}{e^{x}}+\frac{1-e^{-2x}}{e^{x}(e^{-2x}-e^{-2kx})}\leqslant\frac{1}{e^{x}}+\frac{1-e^{-2x}}{e^{x}(e^{-2x}-e^{-4x})}\leqslant3
\]
provided that $x\in(0,\delta)$ with some universal number $\delta.$
According to (\ref{eq:DiamEstPf}), one has 
\begin{equation}
\cosh\big(\alpha(t)d^{t}(A',B')\big)-1\leqslant18C_{2,d}^{2}\sinh^{2}\alpha(t)r.\label{eq:DistEst}
\end{equation}
Recall that $\alpha(t)=t^{-1/4}\downarrow0$ as $t\rightarrow\infty.$
One can choose $T_{r}$ so that 
\[
\alpha(t)r<\delta,\ \sinh\alpha(t)r\leqslant2\alpha(t)r,\ \ \ \forall t>T_{r}.
\]
It follows from (\ref{eq:DistEst}) that 
\[
\cosh\big(\alpha(t)d^{t}(A',B')\big)-1\leqslant72C_{2,d}^{2}\big(\alpha(t)r\big)^{2}\leqslant\cosh\big(12C_{2,d}\alpha(t)r\big)-1,
\]
where we also used the obvious inequality that $\cosh x\geqslant1+x^{2}/2$
for $x>0.$ As a consequence, 
\[
d^{t}(A',B')\leqslant12C_{2,d}r\ \ \ \forall t>T_{r}.
\]
By substituting this back into (\ref{eq:ABvsA'B'}), one obtains the
desired estimate (\ref{eq:DiamEst}) with $C_{4,d}\triangleq12C_{2,d}+2$.
\end{proof}

\subsection{\label{subsec:ParEst}Decomposition of principal eigenvalue with
respect to partition of unity}

Before returning in our geometric context, we shall first recall a
general estimate on principal Dirichlet eigenvalues with respect to
a given partition of unity. This is standard but we carefully state
a form that suits our purpose. Let $N$ be an oriented Riemannian
manifold with volume measure $dx$. 
\begin{defn}
\label{def:POU}A countable family $\{\varphi_{m}:m\in\mathbb{N}\}$
of smooth functions on $N$ is called a \textit{partition of unity}
on $M$ if the following properties hold true:

\vspace{2mm}\noindent (i) $0\leqslant\varphi_{m}\leqslant1$;\\
(ii) $D_{m}\triangleq{\rm supp}\varphi_{m}$ is compact;\\
(iii) for each $x\in N$, there exists a neighbourhood $U$ of $x$
which intersects at most finitely many $D_{m}$'s;\\
(iv) $\sum_{m}\varphi_{m}^{2}=1$.
\end{defn}
In the usual definition there is no square in Condition (iv); we impose
the square for the convenience of our purpose. We will also assume
that $D_{m}$ has piecewise smooth boundary (this will always be true
in our situation). Let $V:N\rightarrow\mathbb{R}$ be a continuous
function and let $D$ be a bounded domain in $N$ with piecewise smooth
boundary. We use $\lambda^{V}(D)$ to denote the principal Dirichlet
eigenvalue of $\Delta+V$. It is standard from the Rayleigh-Ritz formula
that 
\begin{equation}
\lambda^{V}(D)=\sup_{\substack{\psi\in C_{c}^{\infty}(D)\\
\|\psi\|_{L^{2}(D)}=1
}
}\int_{D}\big(-|\nabla\psi|^{2}+V\psi^{2}\big)dx.\label{eq:Rayleigh}
\end{equation}
The following estimate controls the principle Dirichlet eigenvalue
on $D$ (with a sacrifice on the potential) in terms of $\lambda^{V}(D_{m})$
(cf. \cite[Section 3.1, Proposition 1]{GK00}).
\begin{lem}
\label{lem:DEigenEst}Define the function $\Phi\triangleq\sum_{m}|\nabla\varphi_{m}|^{2}$
($\Phi$ is well-defined due to Condition (iii)). Then one has 
\begin{equation}
\lambda^{V-\Phi}(D)\leqslant\sup_{m}\lambda^{V}(D_{m}).\label{eq:DEigenEst}
\end{equation}
\end{lem}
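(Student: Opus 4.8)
The plan is to run the standard Ismagilov--Morgan--Simon (IMS) localisation argument (as in \cite[Section 3.1, Proposition 1]{GK00}) directly on the Rayleigh--Ritz formula \eqref{eq:Rayleigh}. Fix an arbitrary test function $\psi\in C_c^\infty(D)$ with $\|\psi\|_{L^2(D)}=1$. Since $\mathrm{supp}\,\psi$ is compact, Condition (iii) of Definition \ref{def:POU} guarantees that only finitely many of the sets $D_m=\mathrm{supp}\,\varphi_m$ meet $\mathrm{supp}\,\psi$; hence every sum over $m$ below is effectively a finite sum and no convergence or interchange issue arises.

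The key algebraic step is the localisation identity. Expanding $\nabla(\varphi_m\psi)=\varphi_m\nabla\psi+\psi\nabla\varphi_m$ and summing the squares gives
\[
\sum_m|\nabla(\varphi_m\psi)|^2
=\Big(\sum_m\varphi_m^2\Big)|\nabla\psi|^2
+2\psi\,\nabla\psi\cdot\Big(\sum_m\varphi_m\nabla\varphi_m\Big)
+\psi^2\sum_m|\nabla\varphi_m|^2 .
\]
By Condition (iv), $\sum_m\varphi_m^2\equiv1$, so $\sum_m\varphi_m\nabla\varphi_m=\tfrac12\nabla\big(\sum_m\varphi_m^2\big)=0$ and the cross term drops out, yielding $|\nabla\psi|^2=\sum_m|\nabla(\varphi_m\psi)|^2-\Phi\,\psi^2$. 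Substituting this, together with $V\psi^2=\sum_m V(\varphi_m\psi)^2$ (again by (iv)), into the integrand of \eqref{eq:Rayleigh} for the potential $V-\Phi$, the two copies of $\Phi\psi^2$ cancel and one obtains
\[
\int_D\big(-|\nabla\psi|^2+(V-\Phi)\psi^2\big)\,dx
=\sum_m\int_{D_m}\big(-|\nabla(\varphi_m\psi)|^2+V(\varphi_m\psi)^2\big)\,dx ,
\]
where I used that $\varphi_m\psi$ is supported in $D_m$.

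Each $w_m\triangleq\varphi_m\psi$ lies in $H_0^1(D_m)$: it is smooth, and since $\varphi_m$ is continuous with $\mathrm{supp}\,\varphi_m=D_m$ it vanishes on $\partial D_m$. Hence the ($H^1_0$-closure of the) variational characterisation \eqref{eq:Rayleigh} applied to $D_m$ gives $\int_{D_m}(-|\nabla w_m|^2+Vw_m^2)\,dx\le \|w_m\|_{L^2}^2\,\lambda^V(D_m)\le \|w_m\|_{L^2}^2\,\sup_{m'}\lambda^V(D_{m'})$. Summing over $m$ and using $\sum_m\|w_m\|_{L^2}^2=\int_D\psi^2\sum_m\varphi_m^2\,dx=\|\psi\|_{L^2(D)}^2=1$, one gets $\int_D(-|\nabla\psi|^2+(V-\Phi)\psi^2)\,dx\le\sup_m\lambda^V(D_m)$. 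Taking the supremum over all admissible $\psi$ and invoking \eqref{eq:Rayleigh} once more for $\lambda^{V-\Phi}(D)$ yields the claimed bound \eqref{eq:DEigenEst}.

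The points that need care are bookkeeping rather than analytic: one must verify that $w_m=\varphi_m\psi$ is an admissible competitor for $\lambda^V(D_m)$ (this is where the continuity of $\varphi_m$ and the piecewise-smooth boundary assumption on $D_m$ enter), and one must note that all the series reduce to finite sums by the local finiteness in Definition \ref{def:POU}(iii) together with the compactness of $\mathrm{supp}\,\psi$. I expect no genuine obstacle beyond this.
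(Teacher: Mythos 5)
Your proof is correct and follows essentially the same IMS localisation route as the paper: the identity $\sum_m|\nabla(\varphi_m\psi)|^2 = |\nabla\psi|^2 + \Phi\psi^2$ (which the paper calls "an easy calculation" and you derive explicitly via the vanishing cross term), followed by applying the Rayleigh--Ritz bound on each $D_m$ and summing. The only difference is that you spell out the intermediate algebra and the admissibility of $\varphi_m\psi$ more carefully, which the paper leaves implicit.
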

\begin{proof}
Let $\psi\in C_{c}^{\infty}(D)$ be a test function. Define $\psi_{m}\triangleq\psi\varphi_{m}\in C_{c}^{\infty}(D_{m}).$
By using the Condition (iv), an easy calculation shows that 
\[
\sum_{m}|\nabla\psi_{m}|^{2}=|\nabla\psi|^{2}+\Phi\psi^{2}.
\]
It follows that 
\begin{align*}
\int_{D}\big(-|\nabla\psi|^{2}+(V-\Phi)\psi^{2}\big) & =\int_{M}\big(-\sum_{m}|\nabla\psi_{m}|^{2}+V\psi^{2}\big)\\
 & =\sum_{m}\int_{M}\big(-|\nabla\psi_{m}|^{2}+V\psi_{m}^{2}\big).
\end{align*}
On the other hand, for each $m$ one has 
\[
\int_{M}\big(-|\nabla\psi_{m}|^{2}+V\psi_{m}^{2}\big)\leqslant\lambda^{V}(D_{m})\|\psi_{m}\|_{L^{2}(D_{m})}^{2}\leqslant\sup_{m'}\lambda^{V}(D_{m'})\cdot\|\psi_{m}\|_{L^{2}(D_{m})}^{2}.
\]
After summing over $m$, one finds that 
\[
\int_{D}\big(-|\nabla\psi|^{2}+(V-\Phi)\psi^{2}\big)\leqslant\sup_{m'}\lambda^{V}(D_{m'})\cdot\sum_{m}\|\psi_{m}\|_{L^{2}(D_{m})}^{2}=\sup_{m'}\lambda^{V}(D_{m'})\cdot\|\psi\|_{L^{2}}^{2}.
\]
This gives the desired estimate. 
\end{proof}
\begin{rem}
In general, the sacrifice of potential (i.e. $\Phi$) in the estimate
(\ref{eq:DEigenEst}) depends on the magnitude of the gradient of
the partition of unity. There is no guarantee that $\Phi$ can be
made arbitrarily small; in contrast to the principal Neumann eigenvalue,
it is not true that $\lambda^{V}(D)\leqslant\sup_{m}\lambda^{V}(D_{m})$
for any partition of $D$ into subdomains $D_{m}.$ 
\end{rem}

\subsection{\label{subsec:POU}Construction of a partition of unity}

A key step in the proof of Theorem \ref{thm:UpperBd} is to construct
a partition of unity (in the sense of Definition \ref{def:POU}) which
respects the decomposition (\ref{eq:QDecomp}) constructed in Section
\ref{subsec:HypPart} (so that Lemma \ref{lem:DiamEst} can be applied)
and at the same time the sacrifice of potential $\Phi$ is small in
a certain sense. The main technical lemma to achieve this is stated
below. We continue to use the same notation as in Section \ref{subsec:HypPart}. 
\begin{lem}
\label{lem:POU}For each $r,t>1$, one can construct a partition of
unity 
\[
\{\varphi_{k,i}:1\leqslant k\leqslant\tilde{R}(t)/r,\ i\in{\cal I}_{k}^{t}\}
\]
on $Q_{\tilde{R}(t)}^{t}$ which satisfies the following properties.
For any $r>1$, there exists $T_{r}>0$ such that for all $t>T_{r}$,
one has:

\vspace{2mm}\noindent (i) for each $k,i,$ the support $\hat{P}_{k}^{t}(i)\triangleq{\rm supp}\varphi_{k,i}$
is contained in a geodesic ball of radius $K_{d}r$ with some universal
constant $K_{d}$ depending only on $d$;

\vspace{2mm}\noindent (ii) $\Phi_{r}^{t}\triangleq\sum_{k,i}|\nabla^{t}\varphi_{k,i}|_{t}^{2}\leqslant L_{d}/r^{2}$
with some universal constant $L_{d}$ depending only on $d$, where
$\nabla^{t}$, $|\cdot|_{t}$ denote the Riemannian gradient and metric
on $\mathbb{H}_{\alpha(t)}^{d}$ respectively.
\end{lem}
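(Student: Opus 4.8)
The plan is to build the partition of unity by taking tensor products of one-dimensional radial bump functions with angular bump functions subordinate to the covers $\mathcal{U}(\theta_k^t)$, then carefully bound the two contributions to $\Phi_r^t$ separately. For the radial direction, I would fix once and for all a smooth partition of unity $\{g_k : k \geqslant 1\}$ of $[0,\infty)$ with $g_k$ supported in $[(k-2)r, (k+1)r]$ (so a $3r$-window straddling the annulus $A_k^t$), with $\sum_k g_k^2 = 1$ and $|g_k'| \leqslant C/r$; this is obtained by rescaling a fixed template by $r$, which is where the $1/r^2$ in part (ii) ultimately comes from in the radial direction. For the angular direction on each annulus $k$, I would take a smooth partition of unity $\{h_{k,i} : i \in \mathcal{I}_k^t\}$ of $\mathbb{S}^{d-1}$ with $h_{k,i}$ supported in the ball $B(\sigma_i^{k,t}, 2\theta_k^t)$, with $\sum_i h_{k,i}^2 = 1$, and with spherical gradient bounded by $|\nabla_\sigma h_{k,i}|_{\mathbb{S}^{d-1}} \leqslant C_d/\theta_k^t$ — this exists because the balls $B(\sigma_i^{k,t},2\theta_k^t)$ have bounded overlap $D_d$ by Lemma \ref{lem:2ThetaCov} and one can mollify the indicator functions at scale $\theta_k^t$. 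Setting $\varphi_{k,i}(\rho,\sigma) \triangleq g_k(\rho)\, h_{k,i}(\sigma)$ gives $\sum_{k,i}\varphi_{k,i}^2 = \sum_k g_k^2 \sum_i h_{k,i}^2 = 1$, so Definition \ref{def:POU}(iv) holds; conditions (i)--(iii) of that definition are immediate from compact support and local finiteness.

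For part (i) of the lemma, the support $\hat P_k^t(i)$ is contained in $\{(\rho,\sigma): (k-2)r \leqslant \rho \leqslant (k+1)r,\ \sigma \in B(\sigma_i^{k,t}, 2\theta_k^t)\}$. This is a $3r$-thickened, doubly-angularly-enlarged version of the cells $P_k^t(i)$ from Section \ref{subsec:HypPart}. Running the same computation as in the proof of Lemma \ref{lem:DiamEst} — but now with radial width $3r$ in place of $r$ and angular radius $2\theta_k^t$ in place of $\theta_k^t$ — the estimate \eqref{eq:EstAng} again controls $\theta_k^t \sinh((k\pm 1)\alpha(t)r)$ by a constant multiple of $\sinh(\alpha(t)r)$ (using $k\geqslant 2$ and $\alpha(t)r < \delta$ for $t > T_r$, exactly as there), so the same $\cosh$-comparison argument yields $\mathrm{Diam}(\hat P_k^t(i)) \leqslant K_d r$ for $t$ larger than a possibly enlarged threshold $T_r$. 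Hence $\hat P_k^t(i)$ sits inside a geodesic ball of radius $K_d r$.

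For part (ii), the chain rule and the form of the metric $g^t = d\rho^2 + \alpha(t)^{-2}\sinh^2(\alpha(t)\rho)\, d\sigma^2$ give
\begin{equation*}
|\nabla^t \varphi_{k,i}|_t^2 = g_k'(\rho)^2 h_{k,i}(\sigma)^2 + g_k(\rho)^2\, \frac{\alpha(t)^2}{\sinh^2(\alpha(t)\rho)}\, |\nabla_\sigma h_{k,i}(\sigma)|_{\mathbb{S}^{d-1}}^2.
\end{equation*}
Summing over $i$ and using $\sum_i h_{k,i}^2 = 1$ and $\sum_i |\nabla_\sigma h_{k,i}|^2 \leqslant D_d (C_d/\theta_k^t)^2$ (bounded overlap), then summing over $k$ using $\sum_k g_k^2 \leqslant 1$ and $\sum_k (g_k')^2 \leqslant C/r^2$ (at most three $k$'s overlap at any $\rho$), one obtains
\begin{equation*}
\Phi_r^t(\rho,\sigma) \leqslant \frac{C}{r^2} + \sup_{k:\, g_k(\rho)\neq 0}\ \frac{C_d' \, \alpha(t)^2}{\sinh^2(\alpha(t)\rho)\,(\theta_k^t)^2}.
\end{equation*}
On the support of $g_k$ we have $\rho \geqslant (k-2)r$, so $\sinh(\alpha(t)\rho) \geqslant \sinh((k-2)\alpha(t)r)$; combining with the lower bound $\theta_k^t \geqslant C_{1,d}\sinh(\alpha(t)r/2)/\sinh(k\alpha(t)r)$ from \eqref{eq:EstAng} gives
\begin{equation*}
\frac{\alpha(t)^2}{\sinh^2(\alpha(t)\rho)(\theta_k^t)^2} \leqslant \frac{C_d''\, \alpha(t)^2\, \sinh^2(k\alpha(t)r)}{\sinh^2((k-2)\alpha(t)r)\, \sinh^2(\alpha(t)r/2)}.
\end{equation*}
For $t > T_r$ one has $\alpha(t)r < \delta$, hence $\sinh(k\alpha(t)r)/\sinh((k-2)\alpha(t)r)$ is bounded by a universal constant (the same elementary $\sinh$-ratio bound used in Lemma \ref{lem:DiamEst}, here applied with the shift $2$), and $\sinh(\alpha(t)r/2) \geqslant \alpha(t)r/2$, so the whole quantity is $\leqslant C_d'''/r^2$. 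Therefore $\Phi_r^t \leqslant L_d/r^2$ with $L_d$ depending only on $d$, as claimed.

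The main obstacle is the last step: making sure the potentially dangerous angular term $\alpha(t)^2 \sinh^{-2}(\alpha(t)\rho)(\theta_k^t)^{-2}$ does not blow up. Naively this term could be large for the inner annuli (small $\rho$) because $\theta_k^t$ can be close to $\pi/2$ there; the key point is that $\theta_k^t$ is capped at $\pi/2$ and, more importantly, the volume-matching definition \eqref{eq:AngForm} of $\theta_k^t$ precisely couples the angular scale to the radial geometry so that the product $\sinh(\alpha(t)\rho)\cdot\theta_k^t$ is comparable to $\alpha(t) r$ throughout the support — this is exactly the content of the estimate \eqref{eq:EstAng}, and it is the reason the hyperbolic volume growth (which makes $\theta_k^t$ shrink exponentially for large $k$) is exactly compensated by the $\sinh^{-2}$ factor in the metric. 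Getting the constants to be $d$-dependent only (not $r$- or $t$-dependent) requires the uniform $\sinh$-ratio bound, which in turn forces the threshold $T_r$ and the restriction $t > T_r$ stated in the lemma.
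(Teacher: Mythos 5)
Your construction is essentially the same as the paper's: a tensor product of radial bump functions with angular bump functions subordinate to the covers $\mathcal{U}(\theta_k^t)$, with the same decomposition of $|\nabla^t\varphi_{k,i}|_t^2$ into radial and angular terms and the same key observation that the volume-matching definition of $\theta_k^t$ couples the angular scale to $\sinh(\alpha(t)\rho)$ so that both contributions are $O(1/r^2)$. The only notable deviation is in part (i): you simply rerun the diameter computation of Lemma \ref{lem:DiamEst} with $2\theta_k^t$ and a $3r$-wide annulus, whereas the paper first shows a spherical geodesic between two angular components can cross at most $K_{1,d}$ of the $B(\sigma_j,\theta_k^t)$'s and then chains together diameter bounds for the original cells; your route is a bit more direct. (As in the paper, the edge cases $k=1,2$ where $\sinh((k-2)\alpha(t)r)$ vanishes or is tiny need a separate one-line treatment, which you omit just as the paper explicitly waves it through for $k=1$; near the center $\theta_k^t$ is capped at $\pi/2$ and $\rho\mapsto\alpha(t)^2\sinh^{-2}(\alpha(t)\rho)$ is compensated by the $\rho^{-2}$ singularity being integrable against the smooth $\zeta_{k,i}$ once one uses $\sinh^{-1}(\alpha(t)\rho)\cdot\alpha(t)\sim 1/\rho$ for $\rho\lesssim r$.)
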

\begin{proof}
Our construction of $\{\varphi_{k,i}\}$ is very natural: it has the
form 
\begin{equation}
\varphi_{k,i}(\rho,\sigma)=\eta_{k}(\rho)\zeta_{k,i}(\sigma),\label{eq:POU}
\end{equation}
where $(\rho,\sigma)$ are the geodesic polar coordinates on $Q_{\tilde{R}(t)}^{t}$
with respect to the origin. 

\vspace{2mm}\noindent\textit{\uline{Step one: construction of \mbox{$\eta_{k}(\rho)$}}}.

\vspace{2mm}Let
$\varepsilon$ be a fixed small number ($\varepsilon=1/3$ suffices).
We define $\eta_{k}(\rho)$ to be a smooth function such that the
following properties hold true:

\vspace{2mm}\noindent (i) $0\leqslant\eta_{k}(\rho)\leqslant1$;\\
(ii) $\eta_{k}(\rho)=1$ for $\rho\in[(k-1+\varepsilon)r,(k-\varepsilon)r]$;\\
(iii) $\eta_{k}(\rho)=0$ on $[(k-1-\varepsilon)r,(k+\varepsilon)r]^{c}$;\\
(iv) $\sum_{k}\eta_{k}^{2}(\rho)=1$ for all $\rho\in[0,\tilde{R}(t)]$.

\vspace{2mm}\noindent Clearly, such a family $\{\eta_{k}\}$ exists
and the construction is elementary. It is easily seen that one can
choose $\eta_{k}$ so that 
\begin{equation}
|\eta_{k}'(\rho)|\leqslant\frac{1}{2\varepsilon r}\ \ \ \forall k,\rho.\label{eq:EtaBound}
\end{equation}
The figure below is a simple illustration of the construction. \begin{figure}[H]  
\begin{center}   
\includegraphics[scale=0.27]{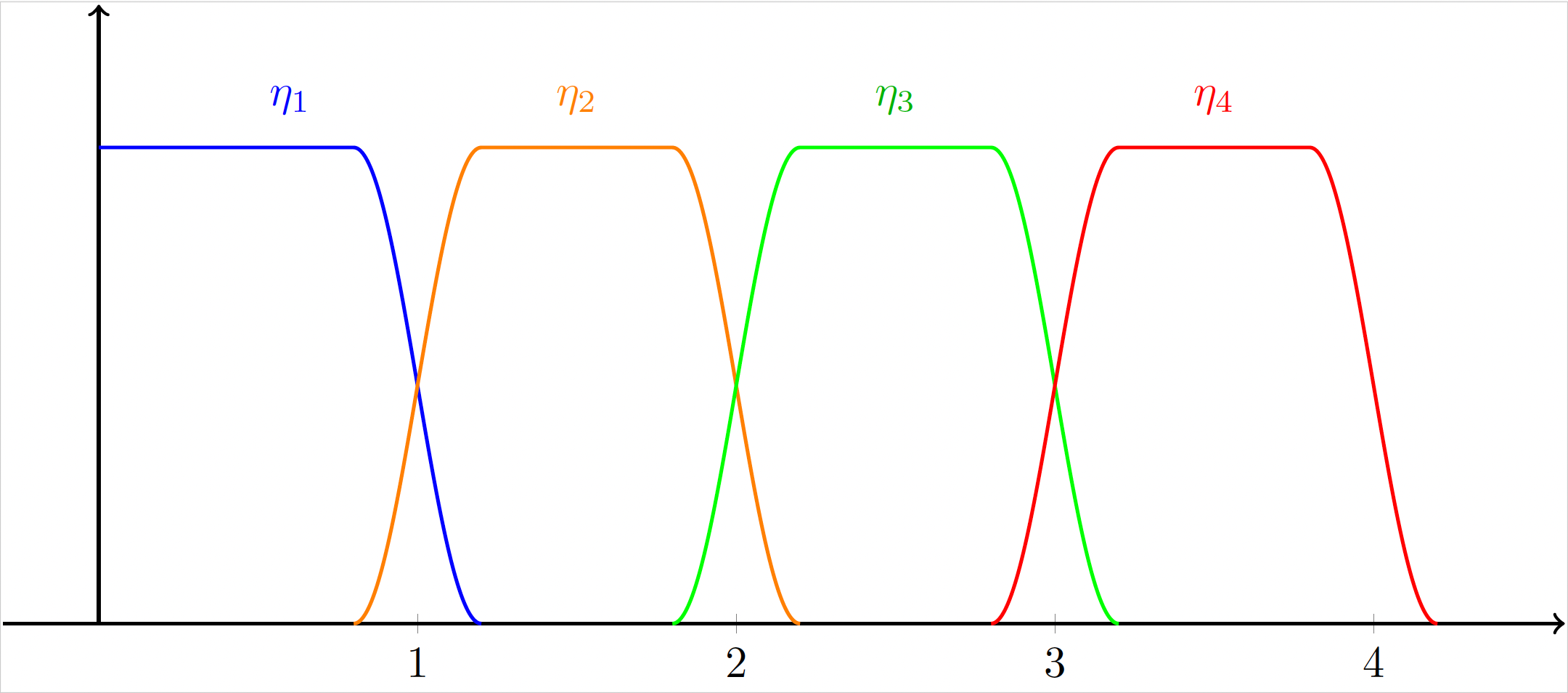}
\end{center} 
\end{figure}

\noindent\textit{\uline{Step two: construction of \mbox{$\zeta_{k,i}(\sigma)$}}}.

\vspace{2mm}This requires some care because one does not have an actual partition
of $\mathbb{S}^{d-1}$ here. Recall that for each fixed $k$ the sphere
$\mathbb{S}^{d-1}$ is covered by ${\cal U}(\theta_{k}^{t})$ (cf.
(\ref{eq:UTheCover})), where ${\cal C}(\theta_{k}^{t})=\{B(\sigma_{i},\theta_{k}^{t}/2):i\in{\cal I}_{k}^{t}\}$
is a maximal packing of $\mathbb{S}^{d-1}$. For each $i\in{\cal I}_{k}^{t}$,
let $\chi_{k,i}(\sigma)$ be a smooth function such that:

\vspace{2mm}\noindent (i) $0\leqslant\chi_{k,i}\leqslant1$;

\vspace{2mm}\noindent (ii) $\chi_{k,i}=1$ on $B(\sigma_{i},\theta_{k}^{t})$
and $\chi_{k,i}=0$ on $B(\sigma_{i},2\theta_{k}^{t})^{c}$;

\vspace{2mm}\noindent (iii) $|\nabla_{\sigma}\chi_{k,i}|\leqslant(\theta_{k}^{t})^{-1}$
where $\nabla_{\sigma}$ denotes the gradient on $\mathbb{S}^{d-1}$.

\vspace{2mm}\noindent The existence of $\chi_{k,i}$ is again elementary.
We now define 
\[
\zeta_{k,i}\triangleq\frac{\chi_{k,i}}{\sqrt{\sum_{j}\chi_{k,j}^{2}}},\ \ \ i\in{\cal I}_{k}^{t}.
\]
Note that $\zeta_{k,i}$ is well-defined; indeed $\sum_{j}\chi_{k,j}^{2}\geqslant1$
since ${\cal U}(\theta_{k}^{t})$ covers $\mathbb{S}^{d-1}$ (cf.
Lemma \ref{lem:ThetaCov}). It is clear by definition that ${\rm supp}\zeta_{k,i}\subseteq B(\sigma_{i},2\theta_{k}^{t})$
and $\sum_{i}\zeta_{k,i}^{2}=1$. We claim that 
\begin{equation}
\big|\nabla_{\sigma}\zeta_{k,i}\big|\leqslant\frac{L_{1,d}}{\theta_{k}^{t}}\ \ \ \forall t,k,i\label{eq:ZetaBound}
\end{equation}
where $L_{1,d}$ is some universal constant. Indeed, denoting $\bar{\zeta}_{k}\triangleq\sqrt{\sum_{j}\chi_{k,j}^{2}}$
one has 
\[
\big|\nabla_{\sigma}\zeta_{k,i}\big|=\big|\frac{\nabla_{\sigma}\chi_{k,i}}{\bar{\zeta}_{k}}-\frac{\chi_{k,i}}{\bar{\zeta}_{k}^{3}}\sum_{j}\chi_{k,j}\nabla_{\sigma}\chi_{k,j}\big|\leqslant\big|\nabla_{\sigma}\chi_{k,i}\big|+\sum_{j}\big|\nabla_{\sigma}\chi_{k,j}\big|.
\]
According to Lemma \ref{lem:2ThetaCov}, for each fixed $\sigma\in\mathbb{S}^{d-1}$
there are at most $D_{d}$ (a universal constant) number of $j$'s
such that $\sigma\in B(\sigma_{j},2\theta_{k}^{t})$. As a result,
one concludes from Property (iii) of $\chi_{k,i}$ that 
\[
\big|\nabla_{\sigma}\zeta_{k,i}\big|\leqslant\frac{D_{d}+1}{\theta_{k}^{t}}.
\]

\noindent\textit{\uline{Step three: verification of required properties for \mbox{$\varphi_{k,i}(\rho,\sigma)$}
defined by (\mbox{\ref{eq:POU}})}}. 

\vspace{2mm}First of all, one has 
\[
\sum_{k,i}\varphi_{k,i}(\rho,\sigma)^{2}=\sum_{k}\eta_{k}(\rho)^{2}\sum_{i}\zeta_{k,i}(\sigma)^{2}=\sum_{k}\eta_{k}(\rho)^{2}\cdot1=1.
\]
Next, we estimate the support $\hat{P}_{k}^{t}(i)$ of $\varphi_{k,i}$.
Let $A,B\in\hat{P}_{k}^{t}(i)$ and let $A',B'$ be their closest
points in the annulus $A_{k}^{t}$ respectively. Note that $A_{\sigma}',B_{\sigma}'$
(the angular components of $A',B'$) are both contained in $B(\sigma_{i},2\theta_{k}^{t})$.
Let $\Gamma$ be the geodesic on $\mathbb{S}^{d-1}$ connecting $A'_{\sigma},B'_{\sigma}$.
Note that $\Gamma\subseteq B(\sigma_{i},2\theta_{k}^{t})$. We claim
that $\Gamma$ can only intersect at most $K_{1,d}$ (a universal
constant) number of $B(\sigma_{j},\theta_{k}^{t})$'s. In fact, suppose
that $\tau\in\Gamma\cap B(\sigma_{j},\theta_{k}^{t}).$ Then for any
$\sigma\in B(\sigma_{j},\theta_{k}^{t})$, one has (denoting $d_{\sigma}$
as the distance on $\mathbb{S}^{d-1}$)
\[
d_{\sigma}(\sigma,\sigma_{i})\leqslant d_{\sigma}(\sigma,\sigma_{j})+d(\sigma_{j},\tau)+d(\tau,\sigma_{i})\leqslant\theta_{k}^{t}+\theta_{k}^{t}+2\theta_{k}^{t}=4\theta_{k}^{t}.
\]
It follows that 
\[
\bigcup_{j:\Gamma\cap B(\sigma_{j},\theta_{k}^{t})\neq\emptyset}B\big(\sigma_{j},\frac{\theta_{k}^{t}}{2}\big)\subseteq B(\sigma_{i},4\theta_{k}^{t}).
\]
But the above union is a disjoint union by our construction. Therefore,
the total number of $B(\sigma_{j},\theta_{k}^{t})$'s intersecting
$\Gamma$ is bounded above by 
\[
\frac{{\rm vol}(B(\sigma_{i},4\theta_{k}^{t}))}{{\rm vol}(B(\sigma_{j},\theta_{k}^{t}/2))}\leqslant K_{1,d}.
\]
According to the diameter estimate (\ref{eq:DiamEst}), one thus concludes
that 
\[
d^{t}(A,B)\leqslant d^{t}(A',B')+K_{1,d}\cdot C_{4,d}r\leqslant(2+K_{1,d}C_{4,d})r=:K_{d}r.
\]
This shows that 
\[
{\rm Diam}\hat{P}_{k}^{t}(i)\leqslant K_{d}r
\]
and thus $\hat{P}_{k}^{t}(i)$ is contained in a geodesic ball of
radius $K_{d}r.$ 

It remains to estimate the potential $\Phi_{r}^{t}\triangleq\sum_{k,i}|\nabla^{t}\varphi_{k,i}|_{t}^{2}$.
First of all, by using the metric expression (\ref{eq:MetricT}) one
can write 
\begin{equation}
\big|\nabla^{t}\varphi_{k,i}(\rho,\sigma)\big|_{t}^{2}=\big(\eta'_{k}(\rho)\zeta_{k,i}(\sigma)\big)^{2}+\alpha(t)^{2}\sinh^{-2}(\alpha(t)\rho)\eta_{k}(\rho)^{2}\big|\nabla_{\sigma}\zeta_{k,i}(\sigma)\big|^{2}.\label{eq:GradDecomp}
\end{equation}
Since $\sum_{i}\zeta_{k,i}^{2}=1$ and each $\rho$ belongs to the
supports of at most two $\eta_{k}$'s, one sees from the estimate
(\ref{eq:EtaBound}) that 
\begin{equation}
\sum_{k,i}\big(\eta'_{k}(\rho)\zeta_{k,i}(\sigma)\big)^{2}=\sum_{k}(\eta'_{k}(\rho))^{2}\leqslant\frac{1}{2\varepsilon^{2}r^{2}}.\label{eq:GradPart1}
\end{equation}
We now estimate the second term on the right hand side of (\ref{eq:GradDecomp}).

Firstly, according to the bounds (\ref{eq:EstAng}) and (\ref{eq:ZetaBound})
one has 
\[
\big|\nabla_{\sigma}\zeta_{k,i}\big|\leqslant L_{2,d}\frac{\sinh\big(k\alpha(t)r\big)}{\sinh\big(\alpha(t)r/2\big)}.
\]
In addition, due to the support property of $\eta_{k}$ one can assume
that $\rho\in[(k-1-\varepsilon)r,(k+\varepsilon)r]$. It follows that
\begin{align}
 & \alpha(t)^{2}\sinh^{-2}(\alpha(t)\rho)\eta_{k}(\rho)^{2}\big|\nabla_{\sigma}\zeta_{k,i}(\sigma)\big|^{2}\nonumber \\
 & \leqslant L_{3,d}\alpha(t)^{2}\sinh^{-2}\big((k-1-\varepsilon)\alpha(t)r\big)\cdot\big(\frac{\sinh\big(k\alpha(t)r\big)}{\sinh\big(\alpha(t)r/2\big)}\big)^{2}\nonumber \\
 & =\frac{L_{3,d}}{r^{2}}\times\big[\alpha(t)r\cdot\sinh^{-1}\big((k-1-\varepsilon)\alpha(t)r\big)\cdot\frac{\sinh\big(k\alpha(t)r\big)}{\sinh\big(\alpha(t)r/2\big)}\big]^{2}.\label{eq:ADBd2}
\end{align}
Here one should assume $k\geqslant2$; the $k=1$ case can be treated
separately in the same manner and its discussion will be omitted. 

Consider the function 
\[
F_{k}(x)\triangleq x\cdot\sinh^{-1}\big((k-1-\varepsilon)x\big)\cdot\frac{\sinh\big(kx\big)}{\sinh\big(x/2\big)},\ \ \ k\geqslant2,\ x\in[0,1].
\]
In our context, $x=\alpha(t)r$ which can be assumed $\leqslant1$
provided that $t$ is sufficiently large (depending on $r$). We claim
that 
\begin{equation}
F_{k}(x)\leqslant C\ \ \ \forall k\geqslant2,x\in[0,1]\label{eq:ADBd3}
\end{equation}
with some universal constant $C.$ Indeed, simple algebra yields that
\begin{align}
F_{k}(x) & =\frac{x}{\sinh(x/2)}\cdot e^{(1+\varepsilon)x}\cdot\big(1+e^{-2kx}\cdot\frac{e^{2(1+\varepsilon)x}-1}{1-e^{-2(k-1-\varepsilon)x}}\big)\nonumber \\
 & \leqslant\frac{x}{\sinh(x/2)}\cdot e^{(1+\varepsilon)x}\cdot\big(1+\frac{e^{2(1+\varepsilon)x}-1}{1-e^{-2(1-\varepsilon)x}}\big),\label{eq:AngDerBound}
\end{align}
where we used $e^{-2kx}\leqslant1$ and $k\geqslant2$ to reach the
last inequality. It is clear that the right hand side of (\ref{eq:AngDerBound}),
as a function of $x\in[0,1]$, is uniformly bounded by some universal
constant $C$. 

By substituting (\ref{eq:ADBd3}) into (\ref{eq:ADBd2}), one obtains
that 
\[
\alpha(t)^{2}\sinh^{-2}(\alpha(t)\rho)\eta_{k}(\rho)^{2}\big|\nabla_{\sigma}\zeta_{k,i}(\sigma)\big|^{2}\leqslant\frac{L_{4,d}}{r^{2}}
\]
uniformly for all $r,t,\rho,\sigma,k,i$ provided that $t>T_{r}$
with suitable constant $T_{r}$ (the precise requirement here is that
$\alpha(t)r\leqslant1$). To conclude Property (ii) of the lemma,
it suffices to observe from Lemma \ref{lem:2ThetaCov} that for any
given $(\rho,\sigma)$, there are at most $2\times D_{d}$ (a universal
constant) pairs of $(k,i)$ such that $\eta_{k}(\rho)\nabla_{\sigma}\zeta_{k,i}(\sigma)\neq0$.
Therefore, 
\begin{equation}
\sum_{k,i}\alpha(t)^{2}\sinh^{-2}(\alpha(t)\rho)\eta_{k}(\rho)^{2}\big|\nabla_{\sigma}\zeta_{k,i}(\sigma)\big|^{2}\leqslant\frac{L_{5,d}}{r^{2}}.\label{eq:GradPart2}
\end{equation}
Combining the two estimates (\ref{eq:GradPart1}) and (\ref{eq:GradPart2}),
one arrives at 
\[
\Phi_{r}^{t}(\rho,\sigma)=\sum_{k,i}|\nabla^{t}\varphi_{k,i}(\rho,\sigma)|_{t}^{2}\leqslant\big(\frac{1}{2\varepsilon^{2}}+L_{5,d}\big)\frac{1}{r^{2}}
\]
for all $t>T_{r}$ and all $(\rho,\sigma)\in Q_{\tilde{R}(t)}^{t}$.
This proves Property (ii) of the lemma. 
\end{proof}

\subsection{\label{subsec:P2L1Est}From pointwise to $L^{1}$ bounds}

In order to make use of eigenvalue estimates, one needs a technical
step to control $u_{R(t)}(t,o)$ in terms of its spatial average
\[
(u_{R(t)}(t,\cdot),{\bf 1})\triangleq\int_{Q_{R(t)}}u_{R(t)}(t,x)dx.
\]
Here $u_{R(t)}$ is the solution to the localised PAM (\ref{eq:LocalPAM})
on the geodesic ball $Q_{R(t)}$ of radius $R(t)$ under curvature
$\kappa\equiv-1.$ The main estimate is summarised as follows.
\begin{lem}
\label{lem:P2A}There exists a universal constant $C>0,$ such that
\begin{equation}
\langle u_{R(t)}(t,o)\rangle\leqslant C(2+R(t))^{1/2}\big(\langle(u_{R(t)}(t,\cdot),{\bf 1})\rangle+|Q_{R(t)}|\big)+e^{-1}\cdot e^{H(t+1)}\label{eq:P2A}
\end{equation}
for all $t>1,$ where $|Q_{R(t)}|$ denotes the volume of $Q_{R(t)}$.
\end{lem}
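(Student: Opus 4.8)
The plan is to reduce the pointwise value $u_{R(t)}(t,o)$ to its spatial average by a Cauchy--Schwarz argument at the level of the Dirichlet heat semigroup, and only afterwards to take the annealed expectation $\langle\cdot\rangle$, at which point the unboundedness of the Gaussian potential becomes harmless. Write $R=R(t)$ and fix a realisation of $\xi$. Let $\{-\mu_k\}_{k\geqslant 0}$ and $\{\psi_k\}_{k\geqslant 0}$ be the Dirichlet eigenvalues and the $L^2(Q_R)$-orthonormal eigenfunctions of $\Delta+\xi$ on $Q_R$ (the $\psi_k$ are smooth by elliptic regularity), and set $c_k\triangleq\langle\psi_k,{\bf 1}\rangle_{L^2(Q_R)}$. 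Mercer's theorem (valid for $t>0$) together with the Feynman--Kac representation of Proposition \ref{prop:FK} gives the absolutely convergent expansions
\[
u_R(t,o)=\sum_{k}e^{-\mu_k t}c_k\psi_k(o),\qquad \big(u_R(t,\cdot),{\bf 1}\big)=\sum_k e^{-\mu_k t}c_k^2,\qquad q^{\xi}_R(t,o,o)=\sum_k e^{-\mu_k t}\psi_k(o)^2,
\]
where $q^{\xi}_R$ is the Dirichlet heat kernel of $\Delta+\xi$ on $Q_R$. Applying Cauchy--Schwarz to the first sum with the weights $e^{-\mu_k t/2}$ and using $u_R(t,o)\geqslant 0$ yields the pathwise bound $u_R(t,o)\leqslant \big(u_R(t,\cdot),{\bf 1}\big)^{1/2}q^{\xi}_R(t,o,o)^{1/2}$.

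Next I would estimate the annealed diagonal heat kernel. Taking $\langle\cdot\rangle$ inside $q^{\xi}_R(t,o,o)=\mathbb E_o[e^{\int_0^t\xi(W_s)ds}\delta_o(W_t);W([0,t])\subseteq Q_R]$ and using ${\rm Var}(\int_0^t\xi(W_s)ds)=\int_0^t\!\int_0^t C(W_u,W_v)\,du\,dv\leqslant \sigma^2t^2$ together with the domain monotonicity $p_R\leqslant p_1$ of the hyperbolic heat kernel and the Davies--Mandouvalos bound of Lemma \ref{lem:HKUpperGen} (which gives $\sup_{s\geqslant 1}p_1(s,o,o)<\infty$), one gets
\[
\big\langle q^{\xi}_R(t,o,o)\big\rangle\leqslant e^{H(t)}\,p_1(t,o,o)\leqslant C_d\,e^{H(t)}\qquad(t\geqslant 1).
\]
Combining this with the pathwise bound and the Cauchy--Schwarz inequality for $\langle\cdot\rangle$,
\[
\langle u_R(t,o)\rangle\leqslant \big\langle (u_R(t,\cdot),{\bf 1})\big\rangle^{1/2}\big\langle q^{\xi}_R(t,o,o)\big\rangle^{1/2}\leqslant C_d^{1/2}\,e^{H(t)/2}\,\big\langle (u_R(t,\cdot),{\bf 1})\big\rangle^{1/2},
\]
and the arithmetic--geometric mean inequality gives $\langle u_R(t,o)\rangle\leqslant \tfrac12\langle (u_R(t,\cdot),{\bf 1})\rangle+\tfrac{C_d}{2}e^{H(t)}$.

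It then remains to absorb the term $\tfrac{C_d}{2}e^{H(t)}$ into $e^{-1}e^{H(t+1)}$. Since $e^{H(t+1)}/e^{H(t)}=e^{\sigma^2 t+\sigma^2/2}\to\infty$, this holds for all $t$ beyond some $t_0$; for $1<t\leqslant t_0$ one uses instead the crude bound $\langle u_R(t,o)\rangle\leqslant\langle u(t,o)\rangle\leqslant e^{H(t)}\leqslant e^{H(t_0)}$ from Proposition \ref{prop:1stOrdAsym} and observes that the term $C(2+R)^{1/2}|Q_R|$ dominates any fixed constant once $C$ is chosen large. This is exactly the role of the polynomial prefactor $(2+R)^{1/2}$ and the additive $|Q_R|$ in the statement: they provide slack for the book-keeping (a slightly cruder heat-kernel estimate, one that tracks the geometry of $Q_R$ rather than invoking the sharp large-time decay of $p_1(s,o,o)$, would likewise produce such a prefactor). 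The only genuinely delicate point in this scheme is the second step: because $\xi$ is Gaussian, hence unbounded, one cannot bound $q^{\xi}_R(t,o,o)$ pathwise by a deterministic heat kernel, and it is essential to take the expectation $\langle\cdot\rangle$ before performing any heat-kernel estimate — after which $C(x,y)\leqslant\sigma^2$ and Lemma \ref{lem:HKUpperGen} finish the argument.
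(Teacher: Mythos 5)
Your argument is correct, but it follows a genuinely different route from the paper's, and it is worth contrasting the two. The paper decomposes $u_{R(t)}(t,o)=I(t)+J(t)$ according to whether $\int_0^1\xi(W_s)\,ds\leqslant1$ or $>1$; it handles $I(t)$ by the Markov property at time $1$, a heat-kernel bound for $p(1,\cdot)$ on $Q_{R(t)}$ (this is where $(2+R(t))^{1/2}$ enters), and a spectral comparison between $(u_{R(t)}(t-1,\cdot),\mathbf 1)$ and $(u_{R(t)}(t,\cdot),\mathbf 1)$ that produces the $|Q_{R(t)}|$ term; it handles $J(t)$ by Jensen's inequality applied to the probability density $\frac{1+\mathbf 1_{[0,1]}(s)}{t+1}\,ds$ on $[0,t]$, which is precisely where $e^{-1}e^{H(t+1)}$ comes from. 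Your proof instead applies a pathwise Cauchy--Schwarz inequality at the spectral level (via Mercer) to get $u_{R}(t,o)\leqslant(u_{R}(t,\cdot),\mathbf 1)^{1/2}q_R^{\xi}(t,o,o)^{1/2}$, then bounds the annealed diagonal heat kernel by $\langle q_R^{\xi}(t,o,o)\rangle\leqslant e^{H(t)}p_1(t,o,o)\leqslant C_de^{H(t)}$ for $t\geqslant1$, and finishes with Cauchy--Schwarz in $\langle\cdot\rangle$ and AM--GM. This is slicker, avoids the Markov/Jensen machinery, and even gives a smaller prefactor $1/2$ on the spatial-average term. The price is that you land on $\tfrac{C_d}{2}e^{H(t)}$ rather than $e^{-1}e^{H(t+1)}$, so you need the extra absorption step and the resulting case split in $t$. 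For $1<t\leqslant t_0$ you appeal to $C(2+R(t))^{1/2}|Q_{R(t)}|$ dominating a fixed constant; this is fine in the setting where the lemma is actually applied (there $R(t)\gg t^{5/4}$, so $R(t)$ is bounded away from $0$), but it makes your constant $C$ depend on $\sigma^2$ and the small-$t$ behaviour of $R(\cdot)$, whereas the paper's argument produces a $C=C_d$ uniformly in $t>1$ with no case split. Both are valid; the paper's decomposition explains exactly why the error term is $e^{-1}e^{H(t+1)}$, while yours gets a cleaner structural bound at the cost of some bookkeeping at the endpoint.
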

\begin{proof}
We write
\begin{equation}
u_{R(t)}(t,o)=\mathbb{E}_{o}\big[\exp\big(\int_{0}^{t}\xi(W_{s})ds\big);W([0,t])\subseteq Q_{R(t)}\big]=:I(t)+J(t),\label{eq:IJDecomp}
\end{equation}
where $I(t),J(t)$ correspond to a further localisation on $\int_{0}^{1}\xi(W_{s})ds\leqslant1$
and $\int_{0}^{1}\xi(W_{s})ds>1$ respectively. 

\vspace{2mm}\noindent\textit{\uline{Estimation of \mbox{$I(t)$}}:} 

\vspace{2mm}By using the Markov
property, one has 
\begin{align*}
I(t) & \leqslant e\cdot\mathbb{E}_{o}\big[{\bf 1}_{\{\tau_{R(t)}>1)\}}\mathbb{E}_{W_{1}}\big[\exp\big(\int_{0}^{t-1}\xi(\tilde{W}_{s})ds\big);\tilde{\tau}_{R(t)}>t-1\big]\big]\\
 & =e\cdot\mathbb{E}_{o}\big[{\bf 1}_{\{\tau_{R(t)}>1)\}}u^{\xi}(t-1,W_{1})\big],
\end{align*}
where $\tilde{W}$ is an independent Brownian motion and $\tilde{\tau}$
is its corresponding exit time. Since $W_{1}\in Q_{R(t)}$ on the
event $\{\tau_{R(t)}>1\}$, the last expectation is bounded above
by 
\[
\int_{Q_{R(t)}}p(1,z)u^{\xi}(t-1,z)dz
\]
where $p(s,z)$ is the hyperbolic heat kernel. According to Lemma
\ref{lem:HKUpperGen}, it is easily seen that 
\[
p(1,z)\leqslant C(2+R(t))^{1/2}\ \ \ \forall z\in Q_{R(t)}.
\]
As a consequence, one obtains that 
\[
I(t)\leqslant Ce\cdot(2+R(t))^{1/2}\cdot\big(u_{R(t)}^{\xi}(t-1,\cdot),{\bf 1}\big)
\]
On the other hand, by the spectral decomposition $\{(\lambda_{k},e_{k})\}$
of $\Delta+\xi$ with Dirichlet boundary condition, one can write
\begin{align*}
\big(u_{R(t)}(t-1,\cdot),{\bf 1}\big) & =\sum_{k}e^{(t-1)\lambda_{k}}(e_{k},{\bf 1})^{2}\\
 & \leqslant\sum_{k:\lambda_{k}\geqslant0}e^{t\lambda_{k}}(e_{k},{\bf 1})^{2}+\sum_{k:\lambda_{k}<0}(e_{k},{\bf 1})^{2}\\
 & \leqslant\sum_{k}e^{t\lambda_{k}}(e_{k},{\bf 1})^{2}+\sum_{k}(e_{k},{\bf 1})^{2}\\
 & =\big(u_{R(t)}(t,\cdot),{\bf 1}\big)+|Q_{R(t)}|
\end{align*}
It follows that 
\begin{equation}
\langle I(t)\rangle\leqslant Ce\cdot(2+R(t))^{1/2}\cdot\big(\langle(u_{R(t)}(t,\cdot),{\bf 1})\rangle+|Q_{R(t)}|\big).\label{eq:IEst}
\end{equation}

\noindent\textit{\uline{Estimation of \mbox{$J(t)$}}:} 

\vspace{2mm}On the event $\{\int_{0}^{1}\xi(W_{s})ds>1\}$,
one has 
\begin{align*}
\exp\big(\int_{0}^{t}\xi(W_{s})ds\big) & \leqslant e^{-1}\cdot\exp\big(\int_{0}^{t}(1+{\bf 1}_{[0,1]}(s))\xi(W_{s})ds\big)\\
 & =e^{-1}\cdot\exp\big((t+1)\int_{0}^{t}\xi(W_{s})\frac{1+{\bf 1}_{[0,1]}(s)}{t+1}ds\big).
\end{align*}
According to Jensen's inequality, the right hand side is bounded above
by
\[
e^{-1}\cdot\int_{0}^{t}e^{(t+1)\xi(W_{s})}\frac{1+{\bf 1}_{[0,1]}(s)}{t+1}ds.
\]
It follows that 
\[
J(t)\leqslant e^{-1}\cdot\int_{0}^{t}\mathbb{E}_{o}\big[e^{(t+1)\xi(W_{s})}\big]\frac{1+{\bf 1}_{[0,1]}(s)}{t+1}ds
\]
and thus 
\begin{align}
\langle J(t)\rangle & \leqslant e^{-1}\cdot\int_{0}^{t}\langle\mathbb{E}_{o}\big[e^{(t+1)\xi(W_{s})}\big]\rangle\frac{1+{\bf 1}_{[0,1]}(s)}{t+1}ds=e^{-1}\cdot e^{H(t+1)}.\label{eq:JEst}
\end{align}
The desired estimate (\ref{eq:P2A}) follows from (\ref{eq:IEst})
and (\ref{eq:JEst}).
\end{proof}

\subsection{\label{subsec:PfL1Asym}Completing the proof of Theorem \ref{thm:UpperBd}}

We are now in a position to put all the previously developed ingredients
together to give a proof of Theorem \ref{thm:UpperBd}. The major
steps are respectively summarised in the following subsections. 

\subsubsection{Step 1: From $u$ to $u_{R(t)}$}

Throughout the rest, we fix a choice of $R(t)$ such that 
\[
t^{5/4}\ll R(t)\ll\beta(t)=t^{3/2}.
\]
According to Lemma \ref{lem:UpperLocal}, one has 
\begin{equation}
\underset{t\rightarrow\infty}{\overline{\lim}}\frac{1}{\beta(t)}\log\langle e^{-H(t)}u(t,o)\rangle\leqslant\underset{t\rightarrow\infty}{\overline{\lim}}\frac{1}{\beta(t)}\log\langle e^{-H(t)}u_{R(t)}(t,o)\rangle.\label{eq:S1}
\end{equation}

\subsubsection{Step 2: From $u_{R(t)}(t,o)$ to the spatial average of $u_{\tilde{R}(t)}^{\xi_{t}}(\beta(t),\cdot)$}

Next, we recall the scaling relation (cf. Lemma \ref{lem:ResPAM}
and (\ref{eq:RescaleLocalPAM})) that 
\[
\big(u_{R(t)}(t,\cdot),{\bf 1}\big)=\alpha(t)e^{H(t)}\big(u_{\tilde{R}(t)}^{\xi_{t}}(\beta(t),\cdot),{\bf 1}\big)_{t}.
\]
Here we set $\tilde{R}(t)\triangleq R(t)/\alpha(t)$, $u_{\tilde{R}(t)}^{\xi_{t}}$
is the solution to the localised PAM (\ref{eq:RescaleLocalPAM}) on
$Q_{\tilde{R}(t)}^{t}$ under curvature $\kappa_{t}\equiv-\alpha(t)^{2}$
with generator ${\cal L}^{t}+\xi_{t}$ and $(\cdot,\cdot)_{t}$ denotes
the $L^{2}$-inner product with respect to $d^{t}x$. According to
Lemma \ref{lem:P2A}, one obtains the following estimate: 
\begin{align}
\langle e^{-H(t)}u_{R(t)}(t,o)\rangle & \leqslant C(2+R(t))^{1/2}\big(\alpha(t)\langle\big(u_{\tilde{R}(t)}^{\xi_{t}}(\beta(t),\cdot),{\bf 1}\big)_{t}\rangle+|Q_{R(t)}|\big)\nonumber \\
 & \ \ \ +e^{-1}\cdot e^{t\sigma^{2}+\sigma^{2}/2},\label{eq:Xi2Xit}
\end{align}
where we used the explicit fact that $H(t)=t^{2}\sigma^{2}/2$ with
$\sigma^{2}=Q(0)$. Since $|Q_{R(t)}|=O(e^{(d-1)R(t)})$ and $R(t)\ll\beta(t)$,
one concludes from (\ref{eq:Xi2Xit}) that 
\begin{equation}
\underset{t\rightarrow\infty}{\overline{\lim}}\frac{1}{\beta(t)}\log\langle e^{-H(t)}u_{R(t)}(t,o)\rangle\leqslant\underset{t\rightarrow\infty}{\overline{\lim}}\frac{1}{\beta(t)}\log\big\langle\big(u_{\tilde{R}(t)}^{\xi_{t}}(\beta(t),\cdot),{\bf 1}\big)_{t}\big\rangle.\label{eq:S2}
\end{equation}

\subsubsection{Step 3: Sacrifice of potential}

Let $r>0$ be a given fixed parameter. We shall make use of the decomposition
(\ref{eq:QDecomp}) of the geodesic ball $Q_{\tilde{R}(t)}^{t}$ introduced
in Section \ref{subsec:HypPart} and the associated partition of unity
$\{\varphi_{k,i}\}$ constructed in Lemma \ref{lem:POU}. Recall from
Property (ii) of the lemma that 
\[
\Phi_{r}^{t}\triangleq\sum_{k,i}|\nabla^{t}\varphi_{k,i}|_{t}^{2}\leqslant\frac{L}{r^{2}}
\]
for all $t>T_{r}$. It follows that 
\begin{align*}
u_{\tilde{R}(t)}^{\xi_{t}}(\beta(t),x) & =\mathbb{E}_{x}\big[\exp\big(\int_{0}^{\beta(t)}\xi_{t}(W_{s}^{t})ds\big);W^{t}([0,\beta(t)])\subseteq Q_{\tilde{R}(t)}^{t}\big]\\
 & \leqslant e^{L\beta(t)/r^{2}}\mathbb{E}_{x}\big[\exp\big(\int_{0}^{\beta(t)}(\xi_{t}-\Phi_{r}^{t})(W_{s}^{t})ds\big);W^{t}([0,\beta(t)])\subseteq Q_{\tilde{R}(t)}^{t}\big]\\
 & =e^{L\beta(t)/r^{2}}u_{\tilde{R}(t)}^{\xi_{t}-\Phi_{r}^{t}}(\beta(t),x)
\end{align*}
and thus 
\begin{equation}
\big(u_{\tilde{R}(t)}^{\xi_{t}}(\beta(t),\cdot),{\bf 1}\big)_{t}\leqslant e^{L\beta(t)/r^{2}}\big(u_{\tilde{R}(t)}^{\xi_{t}-\Phi_{r}^{t}}(\beta(t),\cdot),{\bf 1}\big)_{t}\label{eq:S3}
\end{equation}
for all $t>T_{r}.$

\subsubsection{Step 4: Dominance by principal eigenvalue}

The lemma below controls the spatial average of $u_{\tilde{R}(t)}^{\xi_{t}-\Phi_{r}^{t}}(\beta(t),\cdot)$
in terms of the principal Dirichlet eigenvalue of ${\cal L}^{t}+\xi_{t}-\Phi_{r}^{t}$.
To ease notation, we just write $\eta_{t}\triangleq\xi_{t}-\Phi_{r}^{t}$.
\begin{lem}
\label{lem:AvEigen}Let $\{\lambda_{k}^{\eta_{t}},\phi_{k}\}$ be
the $L^{2}$-spectral decomposition of the operator ${\cal L}^{t}+\eta_{t}$
with Dirichlet boundary condition on $Q_{\tilde{R}(t)}^{t}$. Then
one has 
\begin{equation}
\langle\big(u_{\tilde{R}(t)}^{\eta_{t}}(\beta(t),\cdot),{\bf 1}\big)_{t}\rangle\leqslant{\rm vol}^{t}\big(Q_{\tilde{R}(t)}^{t}\big)\cdot\langle e^{\beta(t)\lambda_{1}^{\eta_{t}}}\rangle\label{eq:AvEigen}
\end{equation}
for all $t.$ 
\end{lem}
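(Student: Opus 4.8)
The plan is to use the $L^2$-spectral decomposition of the Dirichlet operator $\mathcal{L}^t + \eta_t$ on $Q^t_{\tilde R(t)}$ and bound every exponential factor by the top of the spectrum. Since $\xi_t$ is a.s. smooth and $\Phi^t_r$ is smooth, $\eta_t = \xi_t - \Phi^t_r$ is a bounded continuous (indeed smooth) potential on the bounded domain $Q^t_{\tilde R(t)}$ with piecewise smooth boundary, so $\mathcal{L}^t + \eta_t$ is (essentially) self-adjoint on $L^2(Q^t_{\tilde R(t)}, d^t x)$ with compact resolvent; hence there is an orthonormal basis $\{\phi_k\}_{k\geqslant 1}$ of eigenfunctions with real eigenvalues $\lambda_1^{\eta_t}\geqslant\lambda_2^{\eta_t}\geqslant\cdots$. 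By the Feynman-Kac representation (or directly, since $v^{\eta_t} = e^{s(\mathcal{L}^t+\eta_t)}\mathbf{1}$ solves the localised PAM), expanding the initial datum $\mathbf{1} = \sum_k (\mathbf{1},\phi_k)_t\,\phi_k$ gives
\[
u^{\eta_t}_{\tilde R(t)}(\beta(t),\cdot) = \sum_{k\geqslant 1} e^{\beta(t)\lambda_k^{\eta_t}}(\mathbf{1},\phi_k)_t\,\phi_k .
\]

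Next, I would pair this with $\mathbf{1}$ in $L^2(Q^t_{\tilde R(t)}, d^t x)$ to obtain
\[
\big(u^{\eta_t}_{\tilde R(t)}(\beta(t),\cdot),\mathbf{1}\big)_t = \sum_{k\geqslant 1} e^{\beta(t)\lambda_k^{\eta_t}}(\mathbf{1},\phi_k)_t^2 .
\]
Since $\lambda_k^{\eta_t}\leqslant\lambda_1^{\eta_t}$ for all $k$, every summand is at most $e^{\beta(t)\lambda_1^{\eta_t}}(\mathbf{1},\phi_k)_t^2$, and by Parseval's identity $\sum_k (\mathbf{1},\phi_k)_t^2 = \|\mathbf{1}\|_{L^2(Q^t_{\tilde R(t)}, d^t x)}^2 = {\rm vol}^t(Q^t_{\tilde R(t)})$. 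This yields the deterministic (pathwise in $\xi$) bound
\[
\big(u^{\eta_t}_{\tilde R(t)}(\beta(t),\cdot),\mathbf{1}\big)_t \leqslant {\rm vol}^t(Q^t_{\tilde R(t)})\cdot e^{\beta(t)\lambda_1^{\eta_t}} .
\]

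Finally, I would take the expectation $\langle\cdot\rangle$ with respect to $\xi$; since ${\rm vol}^t(Q^t_{\tilde R(t)})$ depends only on the geometry $g^t$ and not on $\xi$, it pulls out of the expectation, giving exactly \eqref{eq:AvEigen}. There is no serious obstacle here — the only points requiring (routine) care are justifying the discreteness of the spectrum and the convergence of the eigenfunction expansion in $L^2$ (standard elliptic theory for a smooth potential on a bounded domain), and observing that the interchange of the $L^2$-pairing with the (absolutely convergent, for $\beta(t)>0$) eigenfunction series is legitimate. All the probabilistic input is confined to ensuring $\xi_t$ is a.s. smooth so that $\eta_t$ is an admissible potential.
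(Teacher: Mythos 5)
Your argument is correct and follows essentially the same route as the paper's own proof: expand $u_{\tilde R(t)}^{\eta_t}(\beta(t),\cdot)$ in the Dirichlet eigenbasis, pair with $\mathbf{1}$, bound $e^{\beta(t)\lambda_k^{\eta_t}}\leqslant e^{\beta(t)\lambda_1^{\eta_t}}$ on each (nonnegative) term $(\mathbf 1,\phi_k)_t^2$, and finish with Parseval's identity $\sum_k(\mathbf 1,\phi_k)_t^2=\mathrm{vol}^t(Q_{\tilde R(t)}^t)$. If anything, your ordering is slightly cleaner than the paper's, since you integrate before invoking $\lambda_k\leqslant\lambda_1$, so the comparison is manifestly term-by-term on nonnegative squares rather than via the pointwise-looking inequality the paper writes down as an intermediate step.
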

\begin{proof}
One has the $L^{2}$-expansion 
\begin{equation}
u_{\tilde{R}(t)}^{\eta_{t}}(\beta(t),\cdot)=\sum_{k=1}^{\infty}e^{\beta(t)\lambda_{k}^{\eta_{t}}}(\phi_{k},{\bf 1})_{t}\phi_{k}\leqslant e^{\beta(t)\lambda_{1}^{\eta_{t}}}\sum_{k=1}^{\infty}(\phi_{k},{\bf 1})_{t}\phi_{k}.\label{eq:AvEigenPf}
\end{equation}
After integrating (\ref{eq:AvEigenPf}) over $Q_{\tilde{R}(t)}^{t}$
and taking expectation $\langle\cdot\rangle$, one obtains the estimate
(\ref{eq:AvEigen}).
\end{proof}
It is straight forward to see that ${\rm vol}^{t}(Q_{\tilde{R}(t)}^{t})=O(e^{(d-1)R(t)})=o(e^{\beta(t)}).$
As a result of Lemma \ref{lem:AvEigen}, one obtains that 
\begin{equation}
\underset{t\rightarrow\infty}{\overline{\lim}}\frac{1}{\beta(t)}\log\big\langle\big(u_{\tilde{R}(t)}^{\eta_{t}}(\beta(t),\cdot),{\bf 1}\big)_{t}\big\rangle\leqslant\underset{t\rightarrow\infty}{\overline{\lim}}\frac{1}{\beta(t)}\log\langle e^{\beta(t)\lambda_{1}^{\eta_{t}}}\rangle.\label{eq:S4}
\end{equation}

\subsubsection{\label{subsec:DecomEigen}Step 5: Decomposition of principal eigenvalue}

We now estimate $\lambda_{1}^{\eta_{t}}$ in terms of the principal
eigenvalue on a ball of \textit{fixed} radius. Given a continuous
potential $V:Q_{\tilde{R}(t)}^{t}\rightarrow\mathbb{R}$ and a sub-domain
$D\subseteq Q_{\tilde{R}(t)}^{t}$ with piecewise smooth boundary,
we use $\lambda_{1}^{V}(D)$ to denote the principal Dirichlet eigenvalue
of the operator ${\cal L}^{t}+V$ on $L^{2}(D,d^{t}x).$ 

First of all, a direct application of Lemma \ref{eq:DEigenEst} shows
that 
\[
\lambda_{1}^{\xi_{t}-\Phi_{r}^{t}}(Q_{\tilde{R}(t)}^{t})\leqslant\max_{k,i}\lambda_{1}^{\xi_{t}}(\hat{P}_{k}^{t}(i)),
\]
where $\hat{P}_{k}^{t}(i)$ is the support of $\varphi_{k,i}$. Therefore,
one has
\begin{align}
\big\langle\exp\big(\beta(t)\lambda_{1}^{\xi_{t}-\Phi_{r}^{t}}(Q_{\tilde{R}(t)}^{t})\big)\big\rangle & \leqslant\big\langle\max_{k,i}\exp\big(\beta(t)\lambda_{1}^{\xi_{t}}(\hat{P}_{k}^{t}(i))\big)\big\rangle\nonumber \\
 & \leqslant\sum_{k,i}\big\langle\exp\big(\beta(t)\lambda_{1}^{\xi_{t}}(\hat{P}_{k}^{t}(i))\big)\big\rangle.\label{eq:EigenDecEst}
\end{align}
According to Property (i) of Lemma \ref{lem:POU}, $\hat{P}_{k}^{t}(i)$
is contained in a geodesic ball of radius $Kr$ on $\mathbb{H}_{\alpha(t)}^{d}$,
say \textbf{$B_{k}^{t}(i)$}. One then knows from (\ref{eq:Rayleigh})
that 
\[
\lambda_{1}^{\xi_{t}}(\hat{P}_{k}^{t}(i))\leqslant\lambda_{1}^{\xi_{t}}(B_{k}^{t}(i)).
\]
In addition, note that both ${\cal L}^{t}$ (as the hyperbolic Laplacian
on $\mathbb{H}_{\alpha(t)}^{d}$) and the distribution of $\xi_{t}$
(cf. Lemma \ref{lem:IsoInv}) are invariant under isometries. As a
result, one has
\[
\lambda_{1}^{\xi_{t}}(B_{k}^{t}(i))\stackrel{d}{=}\lambda_{1}^{\xi_{t}}(\Sigma_{Kr}).
\]
On the other hand, recall that the total number of $(k,i)$'s admit
an upper bound given by (\ref{eq:NumEst}). As a consequence, the
right hand side of (\ref{eq:EigenDecEst}) is bounded above by 
\[
\frac{\tilde{R}(t)}{r}\times\big(\frac{\sinh R(t)}{\sinh\alpha(t)r/2}\big)^{d-1}\times\big\langle\exp\big(\beta(t)\lambda_{1}^{\xi_{t}}(\Sigma_{Kr})\big)\big\rangle.
\]
It follows that 
\begin{equation}
\underset{t\rightarrow\infty}{\overline{\lim}}\frac{1}{\beta(t)}\log\big\langle\exp\big(\beta(t)\lambda_{1}^{\xi_{t}-\Phi_{r}^{t}}(Q_{\tilde{R}(t)}^{t})\big)\big\rangle\leqslant\underset{t\rightarrow\infty}{\overline{\lim}}\frac{1}{\beta(t)}\log\big\langle\exp\big(\beta(t)\lambda_{1}^{\xi_{t}}(\Sigma_{Kr})\big)\big\rangle.\label{eq:S5}
\end{equation}

\subsubsection{Step 6: Application of fixed-domain upper bound}

As the final step, we enlarge the right hand side of (\ref{eq:S5})
to
\[
\underset{t\rightarrow\infty}{\overline{\lim}}\frac{1}{\beta(t)}\log\big\langle\sum_{k=1}^{\infty}\exp\big(\beta(t)\lambda_{k}^{\xi_{t}}(\Sigma_{Kr})\big)\big\rangle
\]
where $\{\lambda_{k}^{\xi_{t}}(\Sigma_{Kr})\}$ is the entire sequence
of Dirichlet eigenvalues. According to Lemma \ref{lem:DomainUpper},
one concludes that the above quantity is bounded by $-\chi_{Kr}$
(cf. (\ref{eq:ChiR}) for its definition). Combining this with the
estimates (\ref{eq:S1}, \ref{eq:S2}, \ref{eq:S3}, \ref{eq:S4},
\ref{eq:S5}), one arrives at the following inequality: 
\[
\underset{t\rightarrow\infty}{\overline{\lim}}\frac{1}{\beta(t)}\log\langle e^{-H(t)}u(t,o)\rangle\leqslant-\chi_{Kr}+\frac{L}{r^{2}},
\]
where $K,L$ are universal constants depending only on $d$. The desired
upper bound (\ref{eq:UpperBd}) thus follows by taking $r\rightarrow\infty.$

\subsection{\label{subsec:FlucExp}Identification of fluctuation exponent}

Combing both Theorem \ref{thm:LowerBd} and Theorem \ref{thm:UpperBd},
it is readily seen that
\[
\chi'\triangleq\lim_{R\rightarrow\infty}\chi_{R}
\]
exists, where $\chi_{R}$ is the local exponent defined by (\ref{eq:ChiR}).
To identify the two exponents $\chi'=\chi$ (cf. (\ref{eq:FlucExp})
for the definition of $\chi$), it is enough to establish the following
fact. Recall that ${\cal S}_{{\rm eu}}(\cdot)$ is the Euclidean Donsker-Varadhan
functional defined by (\ref{eq:DVFunc}), which is viewed as a functional
on ${\cal P}_{c}(\Sigma)$. We also recall that $\lambda_{0}^{{\rm eu};f,R}$
is the principal Dirichlet eigenvalue of $-(\Delta_{{\rm eu}}+f)$
on $\Sigma_{R}$.

\begin{comment}
\begin{prop}
\label{prop:IdenExp}One has $\chi'=\chi$, where $\chi$ is the exponent
defined by (\ref{eq:FlucExp}).
\end{prop}
%
The rest of this section is devoted to the proof of Proposition \ref{prop:IdenExp}.
We begin with the easier part.
\begin{lem}
One has $\chi'\leqslant\chi$. 
\end{lem}
\begin{proof}
Let $\mu\in{\cal P}_{c}(\Sigma)$ be given fixed such that ${\cal S}_{{\rm eu}}(\mu)<\infty$.
In other words, one has 
\[
{\cal S}_{{\rm eu}}(\mu)=\int_{\Sigma}|\nabla\phi|^{2}dx
\]
where $\phi^{2}=\frac{d\mu}{dx}$ with $\phi\in H_{0}^{1}({\rm supp}\mu)$.
Let $R$ be sufficiently large so that ${\rm supp}\mu\subseteq\Sigma_{R}$.
Let $f\in C_{b}(\Sigma_{R})$ be a given function. Since $\phi\in H_{0}^{1}(\Sigma_{R})$
with $\|\phi\|_{L^{2}}=1$, by taking $\psi=\phi$ in the Rayleigh-Ritz
formula (\ref{eq:Rayleigh}) one sees that 
\[
\lambda_{0}^{{\rm eu};f,R}\leqslant\int_{\Sigma_{R}}\big(|\nabla\phi|^{2}-f\phi^{2}\big)dx={\cal S}_{{\rm eu}}(\mu)-\int_{\Sigma_{R}}fd\mu.
\]
Since $f$ is arbitrary, it follows that 
\[
\sup_{f\in C_{b}(\Sigma_{R})}\big\{\int_{\Sigma_{R}}fd\mu+\lambda_{0}^{{\rm eu};f,R}\big\}\leqslant{\cal S}_{{\rm eu}}(\mu).
\]
As a result, 
\[
\chi_{R}\leqslant J(\mu)+\sup_{f\in C_{b}(\Sigma_{R})}\big\{\int_{\Sigma_{R}}fd\mu+\lambda_{0}^{{\rm eu};f,R}\big\}\leqslant J(\mu)+{\cal S}_{{\rm eu}}(\mu)
\]
for all sufficiently large $R$. This implies that $\chi'\leqslant J(\mu)+{\cal S}_{{\rm eu}}(\mu)$
and thus $\chi'\leqslant\chi$ (since $\mu$ is arbitrary).
\end{proof}
%
\end{comment}
\begin{prop}
\label{prop:LegendDV}For each $R>0,$ one has
\[
{\cal S}_{{\rm eu}}(\mu)=\sup_{f\in C_{b}(\Sigma_{R})}\big\{\int_{\Sigma_{R}}fd\mu+\lambda_{0}^{{\rm eu};f,R}\big\}
\]
for any probability measure $\mu$ supported on $\Sigma_{R}$. 
\end{prop}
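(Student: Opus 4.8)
The statement is the standard Legendre--Fenchel duality between the Donsker--Varadhan energy and the bottom of the Dirichlet spectrum, and the plan is to extract it from the Rayleigh--Ritz formula (\ref{eq:Rayleigh}) together with a minimax interchange. The inequality ``$\geq$'' is elementary: if ${\cal S}_{{\rm eu}}(\mu)=\infty$ there is nothing to prove, and otherwise $\phi:=\sqrt{d\mu/dx}\in H_0^1(\Sigma_R)$ with $\|\phi\|_{L^2}=1$, so testing the variational formula for the principal Dirichlet eigenvalue of $-(\Delta_{{\rm eu}}+f)$ with $\psi=\phi$ gives $\lambda_0^{{\rm eu};f,R}\leq\int_{\Sigma_R}(|\nabla\phi|^2-f\phi^2)\,dx={\cal S}_{{\rm eu}}(\mu)-\int_{\Sigma_R}f\,d\mu$ for every $f\in C_b(\Sigma_R)$. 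For the reverse inequality I would first record the dual form of Rayleigh--Ritz: extending (\ref{eq:Rayleigh}) (for $\Delta_{{\rm eu}}+f$ on $\Sigma_R$) from $C_c^\infty$ to $H_0^1$ by density and reparametrising the infimum by the probability measure $\nu=\psi^2\,dx$ rather than by $\psi$, one obtains
\[
\lambda_0^{{\rm eu};f,R}=\inf_{\psi\in H_0^1(\Sigma_R),\,\|\psi\|_{L^2}=1}\int_{\Sigma_R}\big(|\nabla\psi|^2-f\psi^2\big)\,dx=\inf_{\nu\in{\cal P}(\Sigma_R)}\Big\{{\cal S}_{{\rm eu}}(\nu)-\int_{\Sigma_R}f\,d\nu\Big\},
\]
where the measures $\nu$ with ${\cal S}_{{\rm eu}}(\nu)=\infty$ are harmless (here I use that for $\nu$ supported in $\Sigma_R$ one has ${\cal S}_{{\rm eu}}(\nu)<\infty$ exactly when $\sqrt{d\nu/dx}\in H_0^1(\Sigma_R)$, and then ${\cal S}_{{\rm eu}}(\nu)=\int|\nabla\sqrt{d\nu/dx}|^2$).

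Consequently the right-hand side of the proposition equals $\sup_{f}\inf_{\nu}\Theta(\nu,f)$, with $\Theta(\nu,f):={\cal S}_{{\rm eu}}(\nu)+\int_{\Sigma_R}f\,d(\mu-\nu)$, the supremum over $f\in C_b(\Sigma_R)$ and the infimum over $\nu\in{\cal P}(\Sigma_R)$. I would then interchange the two by Sion's minimax theorem. Its hypotheses hold: ${\cal P}(\Sigma_R)$ is convex and, since $\Sigma_R$ is compact, weakly compact; for fixed $\nu$ the map $f\mapsto\Theta(\nu,f)$ is affine and norm-continuous on $C_b(\Sigma_R)$; and for fixed $f$ the map $\nu\mapsto\Theta(\nu,f)$ is convex and weakly lower semicontinuous on ${\cal P}(\Sigma_R)$ --- convexity of ${\cal S}_{{\rm eu}}$ being the pointwise Cauchy--Schwarz bound $\big|\nabla\sqrt{(1-s)\psi_1^2+s\psi_2^2}\big|^2\leq(1-s)|\nabla\psi_1|^2+s|\nabla\psi_2|^2$ integrated, and its lower semicontinuity following because a sequence $\nu_n\Rightarrow\nu$ with $\sup_n{\cal S}_{{\rm eu}}(\nu_n)<\infty$ has $\psi_n:=\sqrt{d\nu_n/dx}$ bounded in $H_0^1(\Sigma_R)$, hence along a subsequence $\psi_n\rightharpoonup\psi$ in $H_0^1(\Sigma_R)$ and $\psi_n\to\psi$ in $L^2$ by Rellich's theorem, forcing $\nu=\psi^2\,dx$ and $\int|\nabla\psi|^2\leq\liminf\int|\nabla\psi_n|^2$. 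Since $C_b(\Sigma_R)$ separates finite signed measures on $\Sigma_R$, the interchange gives
\[
\sup_{f}\inf_{\nu}\Theta(\nu,f)=\inf_{\nu}\sup_{f}\Theta(\nu,f)=\inf_{\nu\in{\cal P}(\Sigma_R)}\Big\{{\cal S}_{{\rm eu}}(\nu)+\sup_{f\in C_b(\Sigma_R)}\int_{\Sigma_R}f\,d(\mu-\nu)\Big\}={\cal S}_{{\rm eu}}(\mu),
\]
because the inner supremum is $0$ when $\nu=\mu$ and $+\infty$ otherwise. Combined with the first display this is the assertion.

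The only genuinely delicate step is the minimax interchange. Because ${\cal S}_{{\rm eu}}$ may be infinite, $\Theta$ is merely $(-\infty,+\infty]$-valued, so one must use the version of Sion's theorem valid for such payoffs (standard; cf. \cite{DZ09} or Sion's original paper), and one must be careful that the compactness the theorem requires is precisely the weak compactness of ${\cal P}(\Sigma_R)$ --- which, like the lower-semicontinuity input, ultimately rests on $\Sigma_R$ being compact (through Rellich's embedding). The remaining ingredients --- the two equivalent forms of the Rayleigh--Ritz infimum, the convexity and semicontinuity of ${\cal S}_{{\rm eu}}$, and the separation of measures by $C_b(\Sigma_R)$ --- are routine.
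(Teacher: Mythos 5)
Your proof is correct, but it follows a genuinely different packaging of the convex duality than the paper does. The paper extends the Donsker--Varadhan functional to a function $F$ on the full space $X = \mathcal{M}(\Sigma_R)$ (set equal to $+\infty$ off the probability measures with $H_0^1$ square-root density), computes via Rayleigh--Ritz that its Legendre transform under the Riesz identification is $G(f) = -\lambda_0^{\mathrm{eu};f,R}$, proves $F$ is convex and lower semi-continuous, and then invokes the abstract Fenchel--Moreau biconjugation theorem (Lemma~\ref{lem:ConvLem}, from Deuschel--Stroock) to conclude $F = G^*$, which is exactly the proposition. You instead recognise the right-hand side as $\sup_f\inf_\nu\Theta(\nu,f)$ with $\Theta(\nu,f)=\mathcal{S}_{\mathrm{eu}}(\nu)+\int f\,d(\mu-\nu)$ and swap the two via Sion's minimax theorem, then evaluate $\inf_\nu\sup_f\Theta$ directly by the separation property of $C_b(\Sigma_R)$. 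Both routes rest on precisely the same inputs --- the reparametrised Rayleigh--Ritz formula identifying $\lambda_0^{\mathrm{eu};f,R}$ as an infimum over probability measures, the convexity of $\mathcal{S}_{\mathrm{eu}}$ (pointwise Cauchy--Schwarz), and its weak lower semicontinuity (Rellich) --- and both ultimately encode the same Hahn--Banach separation. The biconjugation route is slightly more economical here because the easy inequality and the identification of $G$ come for free once $F$ is defined; the minimax route is somewhat more hands-on and makes the $\inf$-$\sup$ structure explicit.

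One small correction to your closing remark: the weak compactness of $\mathcal{P}(\Sigma_R)$ comes from $\Sigma_R$ being compact via Prokhorov/Banach--Alaoglu, not from Rellich's embedding; Rellich is only used for the lower semicontinuity of $\mathcal{S}_{\mathrm{eu}}$. This does not affect the validity of the argument.
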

To prove Proposition \ref{prop:LegendDV}, we make use of an abstract
tool from convex analysis which we first recall (cf. \cite[Theorem 2.2.15]{DS89}).
\begin{lem}
\label{lem:ConvLem}Let $X$ be a locally convex, Hausdorff topological
(real) vector space and let $X^{*}$ be its topological dual. Suppose
that $F:X\rightarrow(-\infty,\infty]$ is a lower semi-continuous,
convex function. Define $G:X^{*}\rightarrow(-\infty,\infty]$ by 
\begin{equation}
G(\lambda)\triangleq\sup\big\{_{X^{*}}\langle\lambda,x\rangle_{X}-F(x):x\in X\big\},\ \ \ \lambda\in X^{*}.\label{eq:LegTranDV}
\end{equation}
If $F$ is not identically equal to $\infty,$ then one has 
\[
F(x)=\sup\big\{_{X^{*}}\langle\lambda,x\rangle_{X}-G(\lambda):\lambda\in X^{*}\big\},\ \ \ x\in X.
\]
\end{lem}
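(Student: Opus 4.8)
The plan is to recognise the statement as the Fenchel--Moreau biconjugation theorem and to prove it by a Hahn--Banach separation argument in the product space $X\times\RR$. Write $F^{**}(x)\triangleq\sup\{{}_{X^{*}}\langle\lambda,x\rangle_{X}-G(\lambda):\lambda\in X^{*}\}$ for the double Legendre transform, so that the assertion is $F=F^{**}$. The inequality $F^{**}\leqslant F$ is immediate: by the very definition of $G$ as a supremum, ${}_{X^{*}}\langle\lambda,x\rangle_{X}-F(x)\leqslant G(\lambda)$ for all $x,\lambda$, hence ${}_{X^{*}}\langle\lambda,x\rangle_{X}-G(\lambda)\leqslant F(x)$, and taking the supremum over $\lambda\in X^{*}$ gives $F^{**}(x)\leqslant F(x)$. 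For the reverse inequality I would work with the epigraph ${\rm epi}(F)\triangleq\{(x,t)\in X\times\RR:t\geqslant F(x)\}$, which is convex (since $F$ is convex), closed (since $F$ is lower semi-continuous) and non-empty (since $F\not\equiv\infty$), inside the locally convex Hausdorff space $X\times\RR$, whose continuous dual consists of the functionals $(x,t)\mapsto{}_{X^{*}}\langle\mu,x\rangle_{X}+\alpha t$ with $\mu\in X^{*}$, $\alpha\in\RR$.

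The first key step is an affine minorant lemma: there exist $\mu_{1}\in X^{*}$ and $\beta_{1}\in\RR$ with $F(x)\geqslant{}_{X^{*}}\langle\mu_{1},x\rangle_{X}+\beta_{1}$ for all $x$, equivalently $G(\mu_{1})\leqslant-\beta_{1}<\infty$. To obtain it, fix $x_{1}$ with $F(x_{1})<\infty$ and $r_{1}<F(x_{1})$; then $(x_{1},r_{1})\notin{\rm epi}(F)$, and strict Hahn--Banach separation of this point from the closed convex set ${\rm epi}(F)$ yields $\mu\in X^{*}$, $\alpha\in\RR$, $c\in\RR$ with ${}_{X^{*}}\langle\mu,x_{1}\rangle_{X}+\alpha r_{1}<c\leqslant{}_{X^{*}}\langle\mu,x\rangle_{X}+\alpha t$ for all $(x,t)\in{\rm epi}(F)$. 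Letting $t\to+\infty$ forces $\alpha\geqslant0$; the case $\alpha=0$ is impossible, since it would give ${}_{X^{*}}\langle\mu,x_{1}\rangle_{X}<c\leqslant{}_{X^{*}}\langle\mu,x_{1}\rangle_{X}$ (take $x=x_{1}$, which lies in ${\rm dom}\,F$). Hence $\alpha>0$; dividing through by $\alpha$ and taking $t=F(x)$ produces the affine minorant, and then $G(\mu_{1})\leqslant\sup_{x}({}_{X^{*}}\langle\mu_{1},x\rangle_{X}-{}_{X^{*}}\langle\mu_{1},x\rangle_{X}-\beta_{1})=-\beta_{1}$.

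The second step treats an arbitrary $x_{0}\in X$. Fix any $a<F(x_{0})$ (any real number if $F(x_{0})=+\infty$); it suffices to prove $a\leqslant F^{**}(x_{0})$ and then let $a\uparrow F(x_{0})$. Since $(x_{0},a)\notin{\rm epi}(F)$, separation gives $\mu\in X^{*}$, $\alpha\geqslant0$, $c\in\RR$ with ${}_{X^{*}}\langle\mu,x_{0}\rangle_{X}+\alpha a<c\leqslant{}_{X^{*}}\langle\mu,x\rangle_{X}+\alpha F(x)$ for all $x\in{\rm dom}\,F$. If $\alpha>0$, divide by $\alpha$: then $G(-\mu)=\sup_{x}({}_{X^{*}}\langle-\mu,x\rangle_{X}-F(x))\leqslant-c$, so $F^{**}(x_{0})\geqslant{}_{X^{*}}\langle-\mu,x_{0}\rangle_{X}-G(-\mu)\geqslant-{}_{X^{*}}\langle\mu,x_{0}\rangle_{X}+c>a$. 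If $\alpha=0$, then ${}_{X^{*}}\langle\mu,x_{0}\rangle_{X}<c\leqslant{}_{X^{*}}\langle\mu,x\rangle_{X}$ for all $x\in{\rm dom}\,F$ (in particular $F(x_{0})=+\infty$); combining with the affine minorant, for each $s>0$ one has $\langle\mu_{1}-s\mu,x\rangle-F(x)\leqslant-\beta_{1}-sc$ on ${\rm dom}\,F$, hence $G(\mu_{1}-s\mu)\leqslant-\beta_{1}-sc$, and therefore $F^{**}(x_{0})\geqslant{}_{X^{*}}\langle\mu_{1}-s\mu,x_{0}\rangle_{X}-G(\mu_{1}-s\mu)\geqslant{}_{X^{*}}\langle\mu_{1},x_{0}\rangle_{X}+\beta_{1}+s\big(c-{}_{X^{*}}\langle\mu,x_{0}\rangle_{X}\big)$; since $c-{}_{X^{*}}\langle\mu,x_{0}\rangle_{X}>0$, letting $s\to+\infty$ gives $F^{**}(x_{0})=+\infty\geqslant a$.

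The main obstacle is the degenerate case of a vertical separating hyperplane ($\alpha=0$), which by itself says nothing about $G$; the remedy is to tilt the affine minorant $\mu_{1}$ by $-s\mu$ and let $s\to+\infty$, converting the vertical separation into $F^{**}(x_{0})=+\infty$. Two smaller points deserve care: all separations are carried out in the locally convex Hausdorff space $X\times\RR$ and use its strict-separation form of Hahn--Banach (separating a point from a disjoint closed convex set), and the closedness of ${\rm epi}(F)$ is exactly the lower semi-continuity hypothesis on $F$, while $F\not\equiv\infty$ is what makes ${\rm epi}(F)$ non-empty so that the separations are legitimate.
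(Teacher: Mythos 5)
Your argument is correct and complete: it is the standard Fenchel--Moreau biconjugation proof, with the easy inequality $F^{**}\leqslant F$ from the definition of $G$, and the reverse inequality obtained by Hahn--Banach strict separation of a point $(x_{0},a)$, $a<F(x_{0})$, from the closed convex non-empty epigraph in $X\times\RR$, the degenerate vertical-hyperplane case being handled correctly by tilting a global affine minorant (whose existence you also establish by a first separation) and letting the tilt parameter tend to infinity. The paper does not prove this lemma at all -- it simply cites Theorem 2.2.15 of Deuschel--Stroock -- so you have in effect reproduced the textbook proof being outsourced; the only caveat is cosmetic sign bookkeeping when you divide by $\alpha$ (your $\mu_{1}$ is $-\mu/\alpha$, not $\mu/\alpha$), which does not affect the argument.
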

In our context, we choose $X\triangleq{\cal M}(\Sigma_{R})$ (the
space of finite signed-measures on $\Sigma_{R}$) and define $E\triangleq{\cal P}(\Sigma_{R})$
to be the subspace of probability measures on $\Sigma_{R}$. Recall
that the topology on $X$ is generated by (\ref{eq:Top}) and one
also has the basic facts given by Lemma \ref{lem:Riesz}. In order
to apply Lemma \ref{lem:ConvLem}, we first define the functional
$F$. This is just the Donsker-Varadhan functional extended to $X$,
i.e. 
\[
F(\mu)\triangleq\int_{\Sigma_{R}}|\nabla\phi|^{2}dx
\]
if $\mu$ is a probability measure supported on $\Sigma_{R}$, $\mu\ll dx$
and $\phi\triangleq\sqrt{\frac{d\mu}{dx}}\in H_{0}^{1}(\Sigma_{R})$;
otherwise we simply set $F(\mu)=+\infty$. We also define $G:X^{*}\rightarrow(-\infty,\infty]$
according to the relation (\ref{eq:LegTranDV}). Since $F$ is finite
only on $\phi\in H_{0}^{1}(\Sigma_{R})$ with $\|\phi\|_{L^{2}}=1$,
under the identification (\ref{eq:Riesz}) it is easy to check that
\[
G(f)=-\lambda_{0}^{{\rm eu};f,R}\ \ \ \forall f\in C_{b}(\Sigma_{R}).
\]
Consequently, in order to complete the proof of Proposition \ref{prop:LegendDV}
it is enough to verify the following fact explicitly.
\begin{lem}
$F$ is convex and lower semi-continuous.
\end{lem}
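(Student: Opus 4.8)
The plan is to prove the two assertions separately: convexity of $F$ and lower semi-continuity of $F$ on $X = \mathcal{M}(\Sigma_R)$.

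\textbf{Convexity.} The natural route is to pass to the ``square-root'' variable. Recall that $F(\mu)<\infty$ forces $\mu$ to be a probability measure with $\mu \ll dx$ and $\phi = \sqrt{d\mu/dx} \in H_0^1(\Sigma_R)$. A classical and convenient fact is that the map $\phi \mapsto \int_{\Sigma_R} |\nabla \phi|^2\,dx$, when re-expressed through the density $g = \phi^2 = d\mu/dx$, becomes jointly convex in $g$; more precisely, one uses the identity $|\nabla \sqrt{g}|^2 = \tfrac14 |\nabla g|^2 / g$ and the fact that $(a,b)\mapsto |b|^2/a$ is jointly convex on $(0,\infty)\times \mathbb{R}^d$ (it is a perspective function of the convex map $b \mapsto |b|^2$). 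Applying this pointwise with $a = g(x)$, $b = \nabla g(x)$ and integrating gives convexity of $\mu \mapsto \int \tfrac14 |\nabla g|^2/g\,dx = F(\mu)$ along the linear structure $\mu_\theta = (1-\theta)\mu_0 + \theta\mu_1$, since then $g_\theta = (1-\theta)g_0 + \theta g_1$ and $\nabla g_\theta = (1-\theta)\nabla g_0 + \theta \nabla g_1$. One must be slightly careful with the set where densities vanish, but the convexity inequality for the perspective function holds with the usual conventions ($|b|^2/0 = +\infty$ unless $b=0$, in which case it is $0$), and this is exactly consistent with the requirement $\phi \in H_0^1$; outside the probability-measure / absolutely-continuous regime $F = +\infty$ and the inequality is trivial. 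I would also note that one can alternatively derive convexity for free from the representation $F = G^*$ proved implicitly below: a Fenchel-Legendre transform is always convex (and lower semi-continuous). In fact this observation makes the \emph{convexity} half essentially automatic, so the real content is lower semi-continuity.

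\textbf{Lower semi-continuity.} Here the cleanest approach is the variational (dual) characterization one is trying to establish anyway, used in the other direction: I claim
\[
F(\mu) = \sup_{f \in C_b(\Sigma_R)} \Big\{ \int_{\Sigma_R} f\,d\mu + \lambda_0^{\mathrm{eu};f,R} \Big\}
\]
can be taken as the \emph{definition}-level fact to be checked, but to prove l.s.c. of $F$ as originally defined it suffices to show that $F$ is the supremum of a family of $\tau$-continuous affine functionals on $X$. The inequality $F(\mu) \geqslant \int f\,d\mu + \lambda_0^{\mathrm{eu};f,R}$ for every $f \in C_b(\Sigma_R)$ is immediate from the Rayleigh-Ritz formula \eqref{eq:Rayleigh}: if $F(\mu)<\infty$ with $\phi = \sqrt{d\mu/dx} \in H_0^1(\Sigma_R)$, $\|\phi\|_{L^2}=1$, then taking $\psi = \phi$ as a test function gives $\lambda_0^{\mathrm{eu};f,R} \geqslant \int_{\Sigma_R}(-|\nabla\phi|^2 + f\phi^2)\,dx = -F(\mu) + \int f\,d\mu$; and if $F(\mu)=+\infty$ the inequality is vacuous. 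Thus $\mu \mapsto \int f\,d\mu + \lambda_0^{\mathrm{eu};f,R}$ is, for each fixed $f$, an affine functional that is continuous in the topology $\tau$ on $X$ (by Lemma \ref{lem:Riesz}(ii), $\mu \mapsto \int f\,d\mu$ is $\tau$-continuous and $\lambda_0^{\mathrm{eu};f,R}$ is a constant), and it lies below $F$ everywhere. Since a pointwise supremum of $\tau$-continuous functions is $\tau$-lower semi-continuous, it follows that $\sup_f\{\cdots\} \leqslant F$ is automatically l.s.c.; the genuine work is the reverse inequality $F(\mu) \leqslant \sup_f\{\cdots\}$, i.e.\ that the lower bound is sharp.

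\textbf{The main obstacle.} The hard part is precisely the sharpness of the Rayleigh-Ritz lower bound: given a probability measure $\mu$ on $\Sigma_R$, one must show that if $\sup_{f}\{\int f\,d\mu + \lambda_0^{\mathrm{eu};f,R}\} < \infty$ then $\mu$ is absolutely continuous, $\phi := \sqrt{d\mu/dx} \in H_0^1(\Sigma_R)$, and $\int |\nabla\phi|^2\,dx$ does not exceed that supremum. The strategy I would use is to test against the family $f = f_\phi := |\nabla\phi|^2/\phi^2$ (suitably truncated/mollified to stay in $C_b$) for a smooth $\phi \in C_c^\infty(\Sigma_R)$ with $\|\phi\|_{L^2}=1$, for which the principal eigenvalue $\lambda_0^{\mathrm{eu};f_\phi,R}$ is attained (up to controlled error) by $\phi$ itself, yielding $\int f_\phi\,d\mu + \lambda_0^{\mathrm{eu};f_\phi,R} \approx \int \frac{|\nabla\phi|^2}{\phi^2}\,d\mu$; optimizing over such $\phi$ and a density argument then forces $\mu$ to have the required Sobolev regularity and identifies the supremum with $F(\mu)$. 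This is a standard but delicate duality/relaxation argument for Donsker-Varadhan functionals; I would either reproduce it carefully or, more economically, invoke Lemma \ref{lem:ConvLem} with $F$ as defined and $G(f) = -\lambda_0^{\mathrm{eu};f,R}$ (which is verified in the text) — but applying that lemma is exactly what \emph{requires} the convexity and l.s.c. of $F$ as inputs, so there is no shortcut around establishing l.s.c. directly via the truncation argument sketched above. Once l.s.c. and convexity are in hand, Lemma \ref{lem:ConvLem} closes the loop and yields Proposition \ref{prop:LegendDV}.
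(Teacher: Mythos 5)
Your convexity argument via the joint convexity of the perspective function $(a,b)\mapsto |b|^2/a$ applied to $g=\phi^2$ and $\nabla g$ is correct, and it is a genuinely different route from the paper's. The paper instead computes $\nabla\zeta$ for $\zeta = \sqrt{(1-\theta)\phi^2 + \theta\psi^2}$ directly and applies the elementary inequality $(acx+bdy)^2 \leqslant a^2x^2 + b^2y^2$ (valid when $a^2+b^2 = c^2+d^2 = 1$); both approaches are equally self-contained, and yours is arguably the more standard convex-analysis viewpoint. You also correctly diagnose the circularity in trying to deduce convexity ``for free'' from $F=G^{**}$: that representation is an \emph{output} of Lemma \ref{lem:ConvLem}, not an input, so it cannot be used here.

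The lower semi-continuity part, however, has a genuine gap. You propose to establish l.s.c.\ by showing $F$ coincides with $\sup_{f}\{\int f\,d\mu + \lambda_0^{\mathrm{eu};f,R}\}$, a supremum of $\tau$-continuous affine functionals. The Rayleigh--Ritz inequality gives $F \geqslant \sup_f\{\cdots\}$ easily, but — as you yourself note — the l.s.c.\ of the right-hand side says nothing about $F$ unless you prove the \emph{reverse} inequality $F \leqslant \sup_f\{\cdots\}$. That reverse inequality is precisely the content of Proposition \ref{prop:LegendDV}, which the paper deduces \emph{from} this lemma via Lemma \ref{lem:ConvLem}; proving it directly by testing against truncations and mollifications of $|\nabla\phi|^2/\phi^2$ is not a triviality (one has to control the behaviour near $\{\phi=0\}$ and near $\partial\Sigma_R$, and extract Sobolev regularity of $\mu$ from the finiteness of the supremum), and you leave it as a sketch. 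The paper avoids all of this: it reduces to sequential lower semi-continuity on the metrizable space $E = \mathcal{P}(\Sigma_R)$, extracts from a sequence $\nu_n \Rightarrow \mu$ with $\sup_n F(\nu_n)<\infty$ a subsequence of $\psi_n = \sqrt{d\nu_n/dx}$ that converges strongly in $L^2$ (Rellich) and weakly in $H^1_0(\Sigma_R)$, identifies the limit with $\sqrt{d\mu/dx}$, and invokes weak lower semi-continuity of the $H^1$-norm; the case $F(\mu)=\infty$ is handled by the same compactness argument plus the observation that $\mathcal{P}(\Sigma_R)$ is $\tau$-closed in $\mathcal{M}(\Sigma_R)$. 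This compactness route is the missing ingredient your proposal needs; without it (or a full execution of the dual argument you outline) the l.s.c.\ claim is not proved.
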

\begin{proof}
(i) \textit{Convexity}. Let $\mu,\nu\in X$ and $\theta\in[0,1].$
Define $\rho\triangleq(1-\theta)\mu+\theta\nu.$ We want to show that
\begin{equation}
F(\rho)\leqslant(1-\theta)F(\mu)+\theta F(\nu).\label{eq:Conv}
\end{equation}
One can assume without loss of generality (WLOG) that both $F(\mu),F(\nu)$
are finite. In other words, one has 
\[
\frac{d\mu}{dx}=\phi^{2},\ \frac{d\nu}{dx}=\psi^{2}
\]
where $\phi,\psi\in H_{0}^{1}(\Sigma_{R})$ with $\|\phi\|_{L^{2}}=\|\psi\|_{L^{2}}=1$.
In this case, $\rho$ is a probability measure and $\frac{d\rho}{dx}=\zeta^{2}$
with 
\[
\zeta\triangleq\sqrt{(1-\theta)\phi^{2}+\theta\psi^{2}}.
\]
Explicit calculation shows that 
\begin{align*}
\nabla\zeta & =(1-\theta)\frac{\phi}{\zeta}\nabla\phi+\theta\frac{\psi}{\zeta}\nabla\psi\\
 & =\sqrt{1-\theta}\cdot\frac{\sqrt{1-\theta}\phi}{\zeta}\nabla\phi+\sqrt{\theta}\cdot\frac{\sqrt{\theta}\psi}{\zeta}\nabla\psi,
\end{align*}
and one thus has 
\begin{equation}
|\nabla\zeta|^{2}\leqslant\big(\sqrt{1-\theta}\cdot\frac{\sqrt{1-\theta}\phi}{\zeta}|\nabla\phi|+\sqrt{\theta}\cdot\frac{\sqrt{\theta}\psi}{\zeta}|\nabla\psi|\big)^{2}.\label{eq:ConvEst}
\end{equation}

To proceed further, we recall the following elementary inequality:
\begin{equation}
(acx+bdy)^{2}\leqslant a^{2}x^{2}+b^{2}y^{2}\label{eq:EleIneq}
\end{equation}
for any nonnegative $a,b,c,d,x,y$ provided that $a^{2}+b^{2}=c^{2}+d^{2}=1$.
This follows easily by observing that the difference between the two
sides is a perfect square. By applying (\ref{eq:EleIneq}) to the
RHS of (\ref{eq:ConvEst}), one finds that 
\[
|\nabla\zeta|^{2}\leqslant(1-\theta)|\nabla\phi|^{2}+\theta|\nabla\psi|^{2}.
\]
The convexity property (\ref{eq:Conv}) follows by integration. 

\vspace{2mm}\noindent (ii) \textit{Lower semi-continuity}. Let $F(\mu)>y$
with given fixed $\mu\in X$ and $y\in\mathbb{R}$. We want to show
that there exists a neighbourhood $U$ of $\mu$ (with respect to
the topology $\tau$) such that 
\begin{equation}
F(\nu)>y\ \ \ \forall\nu\in U.\label{eq:LowerSC}
\end{equation}

\noindent\textit{\uline{Case (i): \mbox{$F(\mu)$} is finite}}. 

\vspace{2mm}In this
case, $\mu\in E$ and $\sqrt{\frac{d\mu}{dx}}=:\phi\in H_{0}^{1}(\Sigma_{R})$.
Since $F=\infty$ on $E^{c}$, it is enough to establish (\ref{eq:LowerSC})
when $F$ is restricted on $E$. Since the induced topology on $E$
(the weak topology) is metrisable, this is equivalent to showing that
\begin{equation}
F(\mu)=\int_{\Sigma}|\nabla\phi|^{2}dx\leqslant\underset{n\rightarrow\infty}{\underline{\lim}}F(\nu_{n})\label{eq:LSCSeq}
\end{equation}
for any sequence $E\ni\nu_{n}\rightarrow\mu$ weakly. Let $\nu_{n}$
be such a sequence. One may further assume that the right hand side
of (\ref{eq:LSCSeq}) is finite; otherwise the claim is trivial. In
this situation, let us just assume WLOG that 
\[
F(\nu_{n})=\int_{\Sigma_{R}}|\nabla\psi_{n}|^{2}dx\rightarrow r
\]
for some finite number $r$, where $\psi_{n}=\sqrt{\frac{d\nu_{n}}{dx}}\in H_{0}^{1}(\Sigma_{R})$.
Then $\{\psi_{n}\}$ is a bounded sequence in $H_{0}^{1}(\Sigma_{R})$.
According to Rellich's compactness theorem and the weak compactness
of $H_{0}^{1}(\Sigma_{R})$, $\psi_{n}$ contains an $L^{2}$-convergent
subsequence as well as an $H^{1}$-weakly convergent subsequence,
say WLOG $\psi_{n}\rightarrow\text{some }\psi\in H_{0}^{1}(\Sigma_{R})$
both strongly in $L^{2}$ and weakly in $H^{1}$. Define the probability
measure $d\nu\triangleq\psi^{2}dx.$ Then one has $\nu_{n}\rightarrow\nu$
weakly, thus implying that $\mu=\nu$ and $\phi=\psi$ a.e. Since
$\psi_{n}\rightarrow\psi=\phi$ weakly in $H^{1},$ one concludes
that 
\[
\|\phi\|_{H^{1}}\leqslant\underset{n\rightarrow\infty}{\underline{\lim}}\|\psi_{n}\|_{H^{1}}.
\]
This immediately gives (\ref{eq:LSCSeq}).

\vspace{2mm}\noindent\textit{\uline{Case (ii): \mbox{$F(\mu)$} is infinite}}. 

\vspace{2mm}If $\mu$
is not a probability measure, it is obvious that any $\nu$ near $\mu$
(with respect to the topology $\tau$) cannot be a probability measure
and one thus has $F(\nu)=\infty.$ Let us assume that $\mu$ is a
probability measure but $F(\mu)=\infty$. The situation is again reduced
to the sequential setting and let us assume that $E\ni\nu_{n}\rightarrow\mu$
weakly. We want to show that 
\[
\underset{n\rightarrow\infty}{\underline{\lim}}F(\nu_{n})=\infty.
\]
Assuming on the contrary that the above limit is finite, a similar
argument to Case (i) shows that $\sqrt{\frac{d\nu_{n}}{dx}}$ contains
a subsequence that is both convergent strongly in $L^{2}$ and weakly
in $H^{1}$, say to some $\phi\in H_{0}^{1}(\Sigma_{R})$. Since $\nu_{n}\rightarrow\mu$
weakly, one then knows that $\phi=\sqrt{\frac{d\mu}{dx}}$ which contradicts
the assumption $F(\mu)=\infty$.
\end{proof}
Finally, we identify the exponent $\chi$ explicitly. 
\begin{lem}
\label{lem:ChiValue}One has $\chi=\sqrt{Q''(0)/2}d.$
\end{lem}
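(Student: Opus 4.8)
The plan is to assemble what has already been established and then reduce the value of $\chi$ to an explicit one-parameter minimisation. First, combining Theorem \ref{thm:LowerBd}, Theorem \ref{thm:UpperBd} and Proposition \ref{prop:LegendDV} one obtains, for each $R>0$,
\[
\chi_R = \inf\big\{J(\mu) + \mathcal{S}_{\mathrm{eu}}(\mu) : \mu \in \mathcal{P}(\Sigma_R)\big\},
\]
since Proposition \ref{prop:LegendDV} asserts precisely that $\sup_{f\in C_b(\Sigma_R)}\{\int_{\Sigma_R}f\,d\mu + \lambda_0^{\mathrm{eu};f,R}\} = \mathcal{S}_{\mathrm{eu}}(\mu)$ for $\mu$ supported on $\Sigma_R$. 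The infima $\chi_R$ are non-increasing in $R$, so $\chi' = \lim_{R\to\infty}\chi_R = \inf\{J(\mu)+\mathcal{S}_{\mathrm{eu}}(\mu):\mu\in\mathcal{P}_c(\Sigma)\}$, which under the identification $\nu = \exp_o^{-1}\mu$ is exactly the exponent $\chi$ of (\ref{eq:FlucExp}); hence $\chi=\chi'$. Now $\Sigma\cong T_oM$ is flat, and in geodesic polar coordinates $d_{\mathrm{eu}}(z,w)$ is literally the Euclidean distance $|x-y|$ on $T_oM\cong\RR^d$: writing a competitor as $\nu = \phi^2\,dx$ with $\phi\in H^1(\RR^d)$ compactly supported, $\|\phi\|_{L^2}=1$, and recalling the expression (\ref{eq:JFunct}) for $J$, this gives
\[
\chi = \inf_{\nu}\Big\{-\frac{Q''(0)}{4}\int_{\RR^d\times\RR^d}|x-y|^2\,\nu(dx)\,\nu(dy) + \int_{\RR^d}|\nabla\phi|^2\,dx\Big\}.
\]

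Next I would carry out the minimisation. Expanding $|x-y|^2$ and using $\nu(\RR^d)=1$ yields $\iint|x-y|^2\,\nu\,\nu = 2\int|x|^2\nu(dx) - 2|m|^2$ with $m = \int x\,\nu(dx)$; since both the variance term and the Dirichlet term are invariant under translating $\nu$ (which preserves compact support), one may assume $m=0$, and the functional becomes the quadratic form
\[
\int_{\RR^d}\big(|\nabla\phi|^2 + c\,|x|^2\phi^2\big)\,dx, \qquad c \triangleq -\tfrac12\, Q''(0)\geqslant 0
\]
(recall $Q''(0)\leqslant 0$), i.e. the form of the harmonic-oscillator operator $-\Delta + c|x|^2$ on $\RR^d$. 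Dropping the constraints that $\phi$ have compact support and that $\nu$ be centred only lowers the infimum, so $\chi \geqslant \lambda_0(-\Delta + c|x|^2)$; this operator factorises over the $d$ coordinates, its ground state is the Gaussian $\phi_0(x) = C\exp(-\tfrac12\sqrt{c}\,|x|^2)$, and its ground-state energy is $d\sqrt{c}$. For the reverse inequality one cuts $\phi_0$ off smoothly outside a large ball and renormalises, producing compactly supported, centred, unit-$L^2$ competitors whose energies converge to $d\sqrt{c}$ as the radius grows; hence $\chi = d\sqrt{c} = d\sqrt{-Q''(0)/2}$, which is the assertion of the lemma. Alternatively, once the problem is put in this Euclidean form one may simply invoke \cite[Identity (4.6)]{GK00}, where exactly this variational identity is derived.

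The only genuinely delicate point is the tension between the compact-support requirement carried along in $\mathcal{P}_c$ and the fact that the extremiser of the unconstrained problem is the (non-compactly-supported) Gaussian $\phi_0$ — this is precisely why one had to pass to the limit $R\to\infty$ in $\chi_R$ rather than work at a fixed $R$, and it is resolved by the elementary truncation argument above. Everything else — that passing to geodesic polar coordinates genuinely converts $J$ and $\mathcal{S}_{\mathrm{eu}}$ into their literal Euclidean counterparts (no Jacobian intervenes since $\mathcal{S}_{\mathrm{eu}}$ is applied to the pulled-back measure on the flat $T_oM$), and the separation-of-variables computation of the oscillator ground state — is routine.
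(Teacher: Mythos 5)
Your proposal is correct, and it does more work than the paper's own proof. The paper handles this lemma with a one-line citation of \cite[Eq.~(4.6)]{GK00} (with $\kappa=1$ and $\Sigma^2 = -Q''(0)\,\mathrm{Id}$ in their notation), simply observing that $J$ and $\mathcal{S}_{\mathrm{eu}}$ are Euclidean objects. You instead re-derive that variational identity from scratch: after assembling $\chi' = \chi = \inf_{\nu\in\mathcal{P}_c(\RR^d)}\{-\frac{Q''(0)}{4}\iint|x-y|^2\,\nu\,\nu + \mathcal{S}_{\mathrm{eu}}(\nu)\}$ via Proposition~\ref{prop:LegendDV} and monotonicity of $R\mapsto\chi_R$, you expand $\iint|x-y|^2\,\nu\,\nu = 2\int|x|^2\,\nu - 2|m|^2$, exploit translation invariance to centre, and recognise the remaining functional as the quadratic form of the harmonic oscillator $-\Delta + c|x|^2$ with $c=-Q''(0)/2$, whose ground-state energy $d\sqrt{c}$ is computed by separation of variables; the truncation step correctly reconciles the compact-support constraint in $\mathcal{P}_c$ with the non-compactly-supported Gaussian extremiser. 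Your remark that no Jacobian appears is also right, for the right reason: $J(\mu)$ depends on $\mu$ only through the push-forward $\exp_o^{-1}\mu$ because $d_{\mathrm{eu}}$ is by definition the pulled-back Euclidean distance, and $\mathcal{S}_{\mathrm{eu}}$ is directly applied to that push-forward with respect to Lebesgue measure, so the hyperbolic volume density never enters. Your final answer $\chi = d\sqrt{-Q''(0)/2}$ matches the formula in Theorem~\ref{thm:MainThm} and incidentally makes visible a sign typo in the lemma statement itself (which reads $\sqrt{Q''(0)/2}\,d$, impossible since $Q''(0)\leqslant 0$). In short: same conclusion, but a self-contained derivation of the harmonic-oscillator identity where the paper merely cites \cite{GK00}.
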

\begin{proof}
Since both functionals $J$ and ${\cal S}_{{\rm eu}}$ essentially
live in the Euclidean space, the result follows directly from \cite[Equation (4.6)]{GK00}
with $\kappa=1$ and $\Sigma^{2}=-Q''(0){\rm Id}$ in their notation. 
\end{proof}

\section{\label{sec:LpAsym}The general moment asymptotics}

In this section, we complete the proof of Theorem \ref{thm:MainThm}
for the general moment asymptotics. The path from $L^{1}$ to $L^{p}$
asymptotics ($p>1$) is quite standard and is a straight forward adaptation
of argument in \cite{GK00}. The moral is that $\langle u(t,o)^{p}\rangle\approx\langle u(pt,o)\rangle$
is a good approximation at the logarithmic scale under consideration.
We first address the lower asymptotics. 

\begin{proof}[Proof of Theorem \ref{thm:MainThm}: Lower asymptotics]

Let $R>0$ be given fixed. The same Jensen- and H\"older-type estimates
leading to \cite[Equation 2.12]{GK00} show that 
\begin{equation}
\langle u(t,o)^{p}\rangle\geqslant\langle(u_{R\alpha(pt)}(pt,\cdot),{\bf 1})\rangle\times\Big(\frac{\langle(u_{R\alpha(pt)}(pt,\cdot),{\bf 1})\rangle}{\langle\sum_{k}e^{pt\lambda_{k}}\rangle}\Big)^{p}.\label{eq:LpLowPf}
\end{equation}
Here $u_{R\alpha(pt)}$ is the localised PAM on $(\Sigma_{R\alpha(pt)},g^{1})$
defined by (\ref{eq:LocalPAM}), $(\cdot,\cdot)$ is the $L^{2}$-inner
product defined by 
\[
(f,g)\triangleq\frac{1}{|\Sigma_{R\alpha(pt)}|}\int_{\Sigma_{R\alpha(pt)}}f(x)g(x)d^{1}x
\]
and $\{\lambda_{k}\}$ are the principal Dirichlet eigenvalues of
$\Delta+\xi$ on $(\Sigma_{R\alpha(pt)},g^{1})$. By a standard scaling
argument, it is readily checked that 
\begin{equation}
\beta(s)\lambda_{k}^{\xi_{s}}(\Sigma_{R})=s\lambda_{k}-H(s)\ \ \ \forall s>0,\label{eq:EigenScale}
\end{equation}
where $\{\lambda_{k}^{\xi_{t}}\}$ are the principal Dirichlet eigenvalues
of ${\cal L}^{s}+\xi_{s}$ on $(\Sigma_{R},g^{s})$. By substituting
(\ref{eq:EigenScale}) into (\ref{eq:LpLowPf}), one finds that 
\[
\langle u(t,o)^{p}\rangle\geqslant\langle(u_{R\alpha(pt)}(pt,\cdot),{\bf 1})\rangle\times\Big(\frac{e^{-H(pt)}\langle(u_{R\alpha(pt)}(pt,\cdot),{\bf 1})\rangle}{\langle\sum_{k}e^{\beta(pt)\lambda_{k}^{\xi_{pt}}(\Sigma_{R})}\rangle}\Big)^{p}.
\]
It is now clear from the lower $L^{1}$ asymptotics (\ref{eq:LowerBdC})
as well as the fixed domain upper asymptotics (\ref{eq:DomainUpper})
that 
\begin{align*}
\underset{t\rightarrow\infty}{\underline{\lim}}\frac{1}{\beta(pt)}\log\big(e^{-H(pt)}\langle u(t,o)^{p}\rangle\big)\geqslant & \underset{t\rightarrow\infty}{\underline{\lim}}\frac{1}{\beta(pt)}\log\big(e^{-H(pt)}\langle(u_{R\alpha(pt)}(pt,\cdot),{\bf 1})\rangle\big)\\
 & \ \ \ +p\underset{t\rightarrow\infty}{\underline{\lim}}\frac{1}{\beta(pt)}\log\big(e^{-H(pt)}\langle(u_{R\alpha(pt)}(pt,\cdot),{\bf 1})\rangle\big)\\
 & \ \ \ -p\underset{t\rightarrow\infty}{\overline{\lim}}\frac{1}{\beta(pt)}\log\big\langle\sum_{k}e^{\beta(pt)\lambda_{k}^{\xi_{pt}}(\Sigma_{R})}\big\rangle\\
\geqslant & -\chi_{R}+p(-\chi_{R}-(-\chi_{R}))=-\chi_{R}.
\end{align*}
Here we remark that although the lower asymptotics (\ref{eq:LowerBdC})
was stated for $\langle u(t,o)\rangle$, it actually holds for $\langle(u_{R\alpha(pt)}(pt,\cdot),{\bf 1})\rangle$
which is exactly how we proved (\ref{eq:LowerBdC}) (cf. (\ref{eq:LowerBdA})).
By letting $R\rightarrow\infty$, one concludes that 
\begin{equation}
\underset{t\rightarrow\infty}{\underline{\lim}}\frac{1}{\beta(pt)}\log\big(e^{-H(pt)}\langle u(t,o)^{p}\rangle\big)\geqslant-\chi,\label{eq:LpLow}
\end{equation}
which gives the desired lower asymptotics.

\end{proof}

Next, we address the upper $L^{p}$ asymptotics. 

\begin{proof}[Proof of Theorem \ref{thm:MainThm}: Upper bound]

First of all, with the lower asymptotics (\ref{eq:LpLow}) at hand
a simple adaptation of the proof of Lemma \ref{lem:UpperLocal} shows
that 
\begin{equation}
\lim_{t\rightarrow\infty}\frac{\langle u_{R(pt)}(t,o)^{p}\rangle}{\langle u(t,o)^{p}\rangle}=1\label{eq:LpLocal}
\end{equation}
provided that $R(pt)\gg t^{5/4}$ (and we fix such a choice of $R(pt)$).
The next adaptation required is Lemma \ref{lem:P2A} in Section \ref{subsec:P2L1Est}.
We continue to use the decomposition (\ref{eq:IJDecomp}) with $Q_{R(t)}$
replaced by $Q_{R(pt)}$. A standard Jensen- and H\"older-type argument
applied to the estimates (\ref{eq:IEst}) and (\ref{eq:JEst}) easily
gives that 
\[
I(t)^{p}\leqslant2^{p-1}(Ce)^{p}(2+R(pt))^{p/2}\big(|Q_{R(pt)}|^{p-1}(u_{R(pt)}(pt,\cdot),{\bf 1})+|Q_{R(pt)}|^{p}\big)
\]
and 
\[
\langle J(t)^{p}\rangle\leqslant e^{-p}e^{H(p(t+1))}=e^{H(pt)+O(t)}
\]
respectively. By substituting the above two estimates into the decomposition
(\ref{eq:IJDecomp}), one obtains that 
\begin{align}
\langle u_{R(pt)}(t,o)^{p}\rangle\leqslant & 2^{p-1}(\langle I(t)^{p}\rangle+\langle J(t)^{p}\rangle)\nonumber \\
\leqslant & 2^{p-1}\big[2^{p-1}(Ce)^{p}(2+R(pt))^{p/2}\big(|Q_{R(pt)}|^{p-1}\langle(u_{R(pt)}(pt,\cdot),{\bf 1})\rangle\nonumber \\
 & \ \ \ +|Q_{R(pt)}|^{p}\big)+e^{H(pt)+O(t)}\big].\label{eq:UpPf}
\end{align}
Recall from Section \ref{subsec:PfL1Asym} that we essentially proved
the following upper $L^{1}$ asymptotics:
\[
\underset{t\rightarrow\infty}{\overline{\lim}}\frac{1}{\beta(t)}\log\big(e^{-H(t)}\langle(u_{R(t)}(t,\cdot),{\bf 1})\rangle\big)\leqslant-\chi.
\]
By applying this to (\ref{eq:UpPf}) and also making use of (\ref{eq:LpLocal}),
one concludes that 
\[
\underset{t\rightarrow\infty}{\overline{\lim}}\frac{1}{\beta(pt)}\log\big(e^{-H(pt)}\langle u(t,o)^{p}\rangle\big)\leqslant-\chi.
\]
This gives the desired upper asymptotics. 

The proof of Theorem \ref{thm:MainThm} is now complete. 
\end{proof}

\section*{Acknowledgement}

XG is supported by ARC grant DE210101352. WX acknowledges the support from the Ministry of Science and Technology via the National Key R\&D Program of China (no.2023YFA1010102) and National Science Foundation China via the standard project grant (no.8200906145). Both authors would like to thank Stephen Muirhead for his valuable discussions and suggestions which have led to various improvements of the current work.

\end{document}